\newcommand{\Bk}{\color{black}}
\newcommand{\Be}{\color{blue}}
\newcommand{\Ab}{\mathbf A}
\newcommand{\ab}{\mathbf a}
\newcommand{\af}{\mathfrak a}
\newcommand{\kp}{\kappa}
\newcommand{\Om}{\Omega}
\newcommand{\Fb}{\mathbf F}
\newcommand{\R}{\mathbb R}
\newcommand{\C}{\mathbb C}
\newcommand{\Ea}{\mathrm{E}_{\rm AB}}
\newcommand{\Ee}{\mathrm{E}_{\varepsilon}}
\newcommand{\Eab}{\mathcal E_{{\rm AB}_h}}
\newcommand{\Eef}{\mathcal E_{h,\varepsilon}}
\newcommand{\Eefn}{\mathcal E_{h,\varepsilon_n}}
\newcommand{\fb}{\mathbf{f}}
\newcommand{\Fab}{\mathbf F_{\rm AB}}
\DeclareMathOperator{\curl}{curl}
\newtheorem{thm}{Theorem}[section]
\newtheorem{prop}[thm]{Proposition}
\newtheorem{lem}[thm]{Lemma}
\newtheorem{corol}[thm]{Corollary}
\newtheorem{defn}[thm]{Definition}
\theoremstyle{remark}
\newtheorem{rem}[thm]{Remark}
\newcommand {\p}{\partial}
\newcommand{\q}{\quad}
\newcommand{\eq}{\begin{equation}}
\newcommand{\eeq}{\end{equation}}
\def\curl{\text{\rm curl\,}}
\def\div{\text{\rm div}}
\def\q{\quad}
\def\v{\vskip}
\def\k{\kappa}
\def\lam{\lambda}
\def\p{\partial}
\def\O{\Omega}
\def\var{\varepsilon}
\def\A{\bold A}
\def\C{\bold C}
\def\E{\bold E}
\def\F{\bold F}
\def\R{\Bbb R}
\def\Z{\mathbb Z}
\def\0{\bold 0}
\def\C{\mathbb C}
\numberwithin{equation}{section}
\title[Aharonov-Bohm magnetic fields]
{Oscillatory patterns in the Ginzburg-Landau model driven by the Aharonov-Bohm potential}
\author{Ayman Kachmar}
\author{XingBin Pan}
\address[A. Kachmar]{Department of Mathematics, Lebanese University, Nabatieh, Lebanon.}
\email{ayman.kashmar@gmail.com}
\address[X.B. Pan]{Department of Mathematics,
East China Normal University, and NYU-ECNU Institute of Mathematical Sciences at NYU Shanghai, Shanghai 200062, P.R. China}
\email{xbpan@math.ecnu.edu.cn  }
\date{\today}
\thanks{Mathematics Subject Classification (2010): 35B40, 35P15, 35Q56}
\begin{document}
\begin{abstract}
We consider the  Aharonov-Bohm magnetic potential  and  study the transition from normal to superconducting solutions within the Ginzburg-Landau model of superconductivity. We  obtain oscillations consistent with the Little-Parks effect. We study the same problem but for a regularization of the Aharonov-Bohm potential, which leads to an interesting Aharonov-Bohm like magnetic field, and we prove that the transition between superconducting and normal solutions is not monotone too. Our results show a mechanism to derive the Aharonov-Bohm magnetic potential starting from a step magnetic field thereby presenting a new aspect of magnetic steps, besides their favoring of the celebrated \emph{edge states}.\Bk
\end{abstract}
\maketitle

\tableofcontents

\section{Introduction}


Non-monotone phase transitions, as the one observed in the Little-Parks experiment \cite{LP},  can be explained mathematically by the property of the lack of strong \emph{diamagnetism}. The question whether the lowest eigenvalue of the magnetic Laplacian is a monotone function of the magnetic field strength  had an early appearance in the literature  \cite{E}. Such  monotonicity has been established in several generic situations \cite{A, BF, FH1, FH2, FK-jde, FP-ball} and has been related to the concentration of the ground states under a magnetic field of large intensity.  Examples violating this monotonicity property are usually related to defects of topological nature, like in general tubular domains \cite{HK}. In disc domains,  defects can be produced by  the variation of the magnetic field \cite{FP} or by imposing a   boundary condition with a strong coupling parameter  \cite{KS}. Recently, in \cite{San}, oscillations has been produced by a more general set-up related to the phase space concentration properties of  ground states.

The Aharonov-Bohm potential  induces a topological defect by puncturing  the domain, and it yields  periodic eigenvalues. We regularize the Aharonov-Bohm  potential by considering a natural  approximation of it which does not cause a topological defect. Interestingly, the regularized potential will still produce the same oscillatory behavior driven by the Aharonov-Bohm eigenvalue. We make this observation rigorous within the Ginzburg-Landau model of superconductivity, and as an outcome, we produce an interesting example of a non-monotone phase transition between the superconducting and normal solutions, and at the same time,  we show a new situation of the lack of strong diamagnetism (Theorem~\ref{thm:osi-ms} below). A notable feature of our results is their validity in a general simply connected domain, whereas  the earlier  results do hold  in disc domains.
\Bk

\subsection{The Ginzburg-Landau model}

Ginzburg and Landau introduced a phenomenological model of the response of superconducting materials to applied magnetic fields.
The behavior of the material is described via the critical configurations of the Ginzburg-Landau functional, defined as follows,
\begin{equation}\label{eq:GL*}
\mathcal E[\psi,\mathfrak A]=\int_\Omega \left(|(\nabla-i\mathfrak A)\psi|^2-\kappa^2|\psi|^2+\frac{\kappa^2}2|\psi|^4\right)\,dx+
\int_\Omega|\curl(\mathfrak A-\mathfrak F)|^2\,dx\,,
\end{equation}
where
\begin{itemize}
\item $\Omega\subset\R^2$ represents the horizontal cross section of the superconducting sample\,;
\item $\kappa\in(0,+\infty)$ is the Ginzburg-Landau parameter\,;
\item $\mathfrak F$ is a given vector field, the applied magnetic potential, such that $\curl\mathfrak F$ is the intensity of a vertical applied magnetic field.
\item $(\psi,\mathfrak A)$ represents the superconducting properties of the material as follows:
\begin{itemize}
\item $|\psi|^2$ measures the density of the superconducting electrons\,;
\item $\curl\mathfrak A$ measures the induced magnetic field in the sample.
\end{itemize}
\end{itemize}
In the two dimensional case we denote by $\curl \A=\p_1A_2-\p_2A_1$. The parameter $\kappa$ will be \emph{fixed} throughout this paper. For this reason, we skip it from the notation. On the opposite we will consider the variation of the parameter $h>0$, that we will introduce in order to display  the   intensity of the applied magnetic field as follows. We rescale the Ginzburg-Landau functional in \eqref{eq:GL*}  by writing $\mathfrak A=h\Ab$ and $\mathfrak F=h\Fb$. Hence, we arrive at the new functional
\begin{equation}\label{eq:GL}
\mathcal E_h(\psi, \Ab)=\int_\Omega \left(|(\nabla-ih\Ab)\psi|^2-\kappa^2|\psi|^2+\frac{\kappa^2}2|\psi|^4\right)\,dx+
h^2\int_\Omega|\curl(\Ab-\Fb)|^2\,dx\,.
\end{equation}
The variational space for this functional depends on the nature of the magnetic potential $\Fb$. In fact:
\begin{itemize}
\item If  $\Fb\in H^1(\Omega;\Bbb R^2)$, then $\mathcal E_h(\psi,\Ab)$ is well defined for all $(\psi,\Ab)\in H^1(\Omega;\C)\times H^1(\Omega;\R^2)$\,.
\item If $\Fb\in L^q_{\rm loc}(\R^2;\Bbb R^2)\cap L^2_{\rm loc}(\R^2\setminus\{0\};\Bbb R^2)$ for some $q\in(1,2)$, then $\mathcal E_h(\psi,\Ab)$ is  well defined for all $\psi\in H^1_{h\Fb}(\Omega;\C)$ and $\Ab\in H^1(\Omega;\R^2)+\Fb$, where $\psi\in H^1_{h\Fb}(\Omega;\C)$ means that $(\nabla-ih\Fb)\psi\in  L^2(\Omega;\Bbb C^2)$ and  $\psi\in L^2(\Omega;\Bbb C)$ (see Sec.~\ref{sec:vs} below for the precise definition of this space).
\end{itemize}
 Hereafter the spaces of real-valued functions, complex-valued functions, and real vector-valued functions are denoted by $L^p(\O), L^p(\O;\Bbb C), L^p(\O;\Bbb R^2)$ respectively. However the norms in these spaces are denoted by the same notation $\|\cdot\|_{L^p(\O)}$.

The case of a uniform applied magnetic field, $\curl\Fb=1$, has been extensively studied in the literature (see the two monographs \cite{FH-b,SS-b} and the references therein), particularly in the framework of critical magnetic fields associated with the various phase transitions in the Ginzburg-Landau model. Recently, the analysis of non-uniform applied magnetic fields matches with some interesting physical phenomena like the Little-Parks effect \cite{LP}  and the presence of \emph{edge} states that concentrate on curves \cite{A, AKP, HK-arma,PK}. More precisely, non-uniform magnetic fields could produce defects of topological nature \cite{FP}. We would like to address this kind of behavior by proving that a \emph{large} uniform magnetic field applied on a \emph{small} region of the sample (magnetic step) produces an effective energy involving the  \emph{Aharonov-Bohm} potential (see Theorem~\ref{thm:ms} in this paper); the later energy shows oscillations in the spirit of the Little-Parks effect (see Corollary~\ref{corol:AB} in this paper). Our contribution displays a new example where  normal/superconducting oscillations exist, and at the same, presents a new aspect of \emph{magnetic steps} besides their celebrated feature of  producing \emph{edge states}.\medskip

In this paper, we work under the hypothesis:
\begin{itemize}
\item $\Omega$ is open, bounded,  simply connected domain  and with   a boundary of class $C^2$\,;
\item $0\in\Omega$.
\end{itemize}
We fix $\varepsilon_0>0$ so that $D(0,\varepsilon_0):=\{x\in\R^2,~|x|<\varepsilon_0\}\subset\Omega$.

\subsection{Aharonov-Bohm potential}
This is the vector field
\begin{equation}\label{eq:F-AB}
\Fb_{\rm AB}(x)=\left(\frac{-x_2}{2\pi |x|^2},\frac{x_1}{2\pi |x|^2}\right)\qquad\big(x=(x_1,x_2)\in\R^2\big)\,,
\end{equation}
which satisfies
\begin{equation}\label{eq:F-AB*}
\Fab\in L^p_{\rm loc}(\R^2,\Bbb R^2)\quad \forall\,p\in[1,2) \quad{\rm and}\quad\curl\Fb_{\rm AB}=\delta_0~{\rm in~}\mathcal D'(\R^2)\,.
\end{equation}
We introduce the space
\begin{equation}\label{eq:spaceH-AB}
\mathcal H_{\rm AB}=H^1_{h\Fb_{\rm AB}}(\Omega;\C)\times (H^1(\Omega;\R^2)+\Fb_{\rm AB})\,,
\end{equation}
and the ground state energy
\begin{equation}\label{eq:enAB}
\Ea(h)=\inf\{\Eab(\psi,\Ab)~:~(\psi,\Ab)\in \mathcal H_{\rm AB}\}\,,
\end{equation}
where $\Eab(\psi,\Ab)$ is defined by \eqref{eq:GL} for $\Fb=\Fb_{\rm AB}$, i.e.
\begin{equation}\label{eq:GL-AB}
\Eab(\psi, \Ab)=\int_\Omega \left(|(\nabla-ih\Ab)\psi|^2-\kappa^2|\psi|^2+\frac{\kappa^2}2|\psi|^4\right)\,dx+
h^2\int_\Omega|\curl(\Ab-\Fb_{\rm AB})|^2\,dx\,.
\end{equation}
Note that $\Ea(h)>-\infty$ because
\begin{equation}\label{eq:GL-AB*}
\begin{aligned}
\Eab(\psi, \Ab)&=\int_\Omega \left(|(\nabla-ih\Ab)\psi|^2+\frac{\kappa^2}2(1-|\psi|^2)^2+h^2|\curl(\Ab-\Fb_{\rm AB})|^2\right)\,dx-\frac{\kappa^2}2|\Omega|\\
&\geq -\frac{\kappa^2}2|\Omega|\,.
\end{aligned}
\end{equation}
Note that the energy in \eqref{eq:enAB} depends on $\kappa$, so we will denote it by $\Ea(h;\kappa)$ when we would like to stress its dependence on $\kappa$.

In the Physics literature, it is more appropriate to integrate the magnetic energy on the whole plane $\R^2$, i.e. to  minimize the following energy functional
\[ \Eab^{\R^2}(\psi, \Ab)=\int_\Omega \left(|(\nabla-ih\Ab)\psi|^2-\kappa^2|\psi|^2+\frac{\kappa^2}2|\psi|^4\right)\,dx+
h^2\int_{\R^2}|\curl(\Ab-\Fb_{\rm AB})|^2\,dx\]
on the space $H^1_{h\Fb_{\rm AB}}(\Omega;\C)\times (H^1(\R^2;\R^2)+\Fb_{\rm AB})$. This will yield the same ground state energy as in \eqref{eq:enAB}, by the simple connectivity of the domain $\Omega$ (see \cite[Sec.~10.5, p.~154]{FH-b}).
\Bk
\subsection{Magnetic steps}

For all $\varepsilon\in(0,\varepsilon_0)$, define the vector field
\begin{equation}\label{eq:mstep}
\Fb_\varepsilon(x)=
\begin{cases}
\Fb_{\rm AB}(x)&~{\rm if~}|x|>\varepsilon\,,\medskip\\
\displaystyle\frac1{\pi\varepsilon^2}\Ab_0(x)&~{\rm if~}|x|<\varepsilon\,,
\end{cases}
\end{equation}
where $\Ab_0(x):=\frac12(-x_2,x_1)$. Note that $\Fb_\varepsilon\in H^1_{\rm loc}(\R^2;\Bbb R^2)$ and generates the following magnetic field
\begin{equation}\label{eq:msteps*}
B_\varepsilon:=\curl\Fb_\varepsilon=\frac1{\pi\varepsilon^2}\mathbf 1_{D(0,\varepsilon)}\,,
\end{equation}
which is an example of a \emph{magnetic step}.

One interesting feature of magnetic steps is their manifestation of quantum mechanical  \emph{edge states}, a celebrated phenomenon extensively studied for \emph{linear} models   \cite{RP,HPRS,DHS}. For superconductors with large Ginzburg-Landau parameter, magnetic steps also  produce {\it edge states} in a non-linear framework  \cite{AKP} and enjoy an interesting analogy with \emph{piece-wise smooth} domains \cite{A, BF, CG, CG1} at the onset of superconductivity.

\subsection{From magnetic steps to Aharonov-Bohm}\label{sec:m-step}

We show a new feature of magnetic steps related to the Aharonov-Bohm potential. The connection can be seen \emph{formally} by comparing \eqref{eq:F-AB*} and
$$B_\varepsilon\to\delta_0\quad{\rm in~}\mathcal D'(\R^2)\q\text{as }\var\to 0\,.$$
To make this formal comparison precise, we introduce the following space
\begin{equation}\label{eq:spaceH}
\mathcal H=H^1(\Omega;\C)\times H^1(\Omega;\R^2)\,,
\end{equation}
and the following ground state energy
\begin{equation}\label{eq:en-e}
\Ee(h)=\inf\{\Eef(\psi,\Ab)~:~(\psi,\Ab)\in \mathcal H\}\,,
\end{equation}
where $\Eef(\psi,\Ab)$ is defined by \eqref{eq:GL} for $\Fb=\Fb_\varepsilon$, i.e.
\begin{equation}\label{eq:GL-e}
\Eef(\psi, \Ab)=\int_\Omega \left(|(\nabla-ih\Ab)\psi|^2-\kappa^2|\psi|^2+\frac{\kappa^2}2|\psi|^4\right)\,dx+
h^2\int_\Omega|\curl(\Ab-\Fb_{\varepsilon})|^2\,dx\,.
\end{equation}
The point now is to compare the ground state energies $\Ea(h)$ and $\Ee(h)$ (see \eqref{eq:GL-e} and \eqref{eq:enAB}).

\begin{thm}\label{thm:ms}~
\begin{enumerate}
\item For all $h>0$,
$$\lim_{\varepsilon\to0_+}\Ee(h)=\Ea(h)\,.$$
\item The function $h\mapsto \Ea(h)$ is $2\pi$-periodic. 
\item If $h\in2\pi\Z$,
$$\Ea(h)=-\frac{\kappa^2}2|\Omega|$$
and the energy $\Eab(\psi, \Ab)$ in \eqref{eq:GL-AB} is minimized for
$$(\psi:=e^{i \frac{h}{2\pi}\theta},\Ab:=\Fab)\,,$$
where $(r,\theta)$ denote the polar coordinates in $\R^2$.
\end{enumerate}
\end{thm}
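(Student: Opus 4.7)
The plan is to handle the three parts in reverse order, since (3) and (2) reduce to direct calculations, while (1) requires a matching-bounds argument. For part (3) with $h=2\pi k$, $k\in\Z$, I will check that $\psi_0(x):=e^{ik\theta(x)}$ and $\Ab_0:=\Fab$ realize the bulk bound in \eqref{eq:GL-AB*}: because $\nabla\theta=2\pi\Fab$ off the origin, $(\nabla-ih\Fab)\psi_0=0$, so the kinetic term vanishes; $|\psi_0|\equiv 1$ makes the nonlinear part equal $-\tfrac{\kappa^2}{2}|\Om|$; and $\Ab_0-\Fab=0$ kills the magnetic term. For part (2), given $(\psi,\Ab)\in\mathcal H_{\rm AB}$ at level $h$, I will form the candidate at level $h+2\pi$
\[\tilde\psi=e^{i\theta}\psi,\qquad \tilde\Ab=\frac{h\Ab+2\pi\Fab}{h+2\pi}.\]
Using $\nabla\theta=2\pi\Fab$ off the origin one verifies the intertwining identity $(\nabla-i(h+2\pi)\tilde\Ab)\tilde\psi=e^{i\theta}(\nabla-ih\Ab)\psi$, so the kinetic and nonlinear integrands match. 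The magnetic term is preserved because $\tilde\Ab-\Fab=\tfrac{h}{h+2\pi}(\Ab-\Fab)$, and the prefactor $(h+2\pi)^2$ cancels the square of $\tfrac{h}{h+2\pi}$. This gives $\Ea(h+2\pi)\le\Ea(h)$, and the opposite inequality follows by the symmetric construction with $\theta$ replaced by $-\theta$, proving $2\pi$-periodicity.

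For part (1), the two basic observations are that $\Fb_\varepsilon\equiv\Fab$ outside $D(0,\varepsilon)$, and that by \eqref{eq:GL-AB*} any near-minimizer of $\Eab$ or $\Eef$ may be assumed to satisfy $|\psi|\le 1$ by the standard truncation. The main tool will be the logarithmic cut-off
\[\chi_\varepsilon(x)=\min\!\left(\max\!\left(\tfrac{\log(|x|/\varepsilon)}{\log(1/\sqrt\varepsilon)},0\right),1\right),\]
equal to $0$ on $D(0,\varepsilon)$ and to $1$ outside $D(0,\sqrt\varepsilon)$, with $\|\nabla\chi_\varepsilon\|_{L^2(\Om)}^2=O(|\log\varepsilon|^{-1})$. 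For the upper bound, given a near-minimizer $(\psi,\Ab)$ of $\Eab$, the test pair $(\chi_\varepsilon\psi,\Ab-\Fab+\Fb_\varepsilon)\in\mathcal H$ preserves the magnetic term (the shift cancels in the curl), the nonlinear term converges by dominated convergence, and the kinetic cross terms are $o(1)$ because $\|\psi\|_\infty\|\nabla\chi_\varepsilon\|_{L^2}=o(1)$; on $\mathrm{supp}\,\chi_\varepsilon$ the potentials $\Ab-\Fab+\Fb_\varepsilon$ and $\Ab$ coincide, so only the gradient-of-cutoff remainder has to be estimated. The lower bound is symmetric: from a near-minimizer $(\psi_\varepsilon,\Ab_\varepsilon)$ of $\Eef$ I form $(\chi_\varepsilon\psi_\varepsilon,\Ab_\varepsilon-\Fb_\varepsilon+\Fab)\in\mathcal H_{\rm AB}$, where the cut-off is precisely what guarantees $\chi_\varepsilon\psi_\varepsilon\in H^1_{h\Fab}$ in spite of the singularity of $\Fab$ at the origin.

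The hard step is this kinetic estimate in part (1): because $\psi$ need not vanish at the origin, a naive cut-off with $\|\nabla\chi\|_{L^2}$ of order one would introduce an $O(1)$ error, whereas the logarithmic cut-off, classical in two-dimensional capacity arguments, is what forces all error terms to be $o(1)$ and closes the argument.
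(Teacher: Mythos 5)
Your proposal is correct, and while parts (2) and (3) coincide with the paper's arguments (the same winding transformation $(\psi,\ab)\mapsto(e^{i\theta}\psi,\tfrac{h}{h+2\pi}\ab)$ for periodicity, noting $\nabla\theta=2\pi\Fab$, and the same direct verification that $(e^{in_0\theta},\Fab)$ attains the bound \eqref{eq:GL-AB*} — the paper's Proposition~\ref{prop:deg-case} additionally classifies all minimizers, which the statement does not require), your part (1) takes a genuinely different route. The paper proves the lower bound through the compactness machinery of Section~4 (uniform $H^2$ estimates away from the origin in Proposition~\ref{prop:min-MS}, diagonal extraction and weak lower semicontinuity in Proposition~\ref{prop:min-MS*}), and proves the upper bound by a case split: for $h\notin2\pi\Z$ it invokes Proposition~\ref{prop:min-AB-H1} (via the magnetic Hardy-type bound of Lemma~\ref{lem:1D-op}) to know that an $\Eab$-minimizer satisfies $\psi\in H^1(\Omega;\C)$ and $\Fab\psi\in L^2$, so the minimizer itself is admissible for $\Eef$ and dominated convergence closes the estimate; for $h\in2\pi\Z$ it uses the explicit power-law quasi-mode $(r/\sqrt\varepsilon)^p e^{in_0\theta}$ with successive limits $\varepsilon\to0_+$, $p\to0_+$. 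You instead run a single symmetric transplantation in both directions with the logarithmic capacity cutoff, exploiting that the shift $\Ab\mapsto\Ab-\Fab+\Fb_\varepsilon$ (and its inverse) preserves the field energy exactly and that the potentials agree on $\supp\chi_\varepsilon$; this avoids the degenerate/non-degenerate dichotomy, the Hardy-inequality input, and the compactness extraction altogether, using only $\|\psi\|_{L^\infty}\le1$ and $\|\nabla\chi_\varepsilon\|_{L^2}^2=O(|\log\varepsilon|^{-1})$. Two routine points you should make explicit to fully close your argument: the cross term in the kinetic energy needs the $\Eef$-minimizers' kinetic energy to be bounded uniformly in $\varepsilon$, which follows from $\Ee(h)\le\Eef(0,\Fb_\varepsilon)=0$ together with the coercivity identity as in \eqref{eq:GL-AB*}; and in the lower bound the truncation \emph{increases} the nonlinear term (since $t\mapsto-\kappa^2t+\tfrac{\kappa^2}{2}t^2$ is decreasing on $[0,1]$), but only on $D(0,\sqrt\varepsilon)$, an $O(\varepsilon)$ error given $|\psi|\le1$. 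What each approach buys: yours is more elementary and uniform in $h$; the paper's compactness route additionally produces the limiting configuration $(\psi_*,\ab_*)$ and the convergence of minimizers — information the authors actually use (to conclude that $(\psi_*,\ab_*)$ minimizes $\Eab$, and the same machinery recurs in the eigenvalue convergence behind Theorem~\ref{thm:osi-ms}) — while Proposition~\ref{prop:min-AB-H1} records the independently interesting fact that $\psi\in H^1(\Omega;\C)$ for $h\notin2\pi\Z$.
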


Theorem \ref{thm:ms} suggests that for small $\var>0$, $E_\var(h)$ is approximately periodic in $h$.

\subsection{Transition to the normal state}

Given $\kappa,h>0$, a critical point $(\psi,\Ab)_{\kappa,h}$ of \eqref{eq:GL-AB}  is said to be a \emph{normal} solution (or {\it trivial} solution) if $\psi=0$ everywhere in $\Omega$; if $\psi$ is not identically $0$ on $\Omega$, the critical point is said to be a \emph{superconducting} solution.

In  \emph{generic} situations, all critical points become \emph{normal} solutions after sufficiently increasing the intensity of the applied magnetic field  \cite{GP, LuP}. For the Aharonov-Bohm potential, we will prove that such  transition does \emph{not} occur. In fact, every critical point $(\psi,\Ab)_{\kappa,h}$ displays an oscillatory behavior by transitioning back and forth from \emph{normal} to \emph{superconducting} solutions. Examples of this sort are rare in the literature and are usually observed in non-simply connected domains (\cite{FP, HK}). On the opposite, generically, one observes a monotone transition from \emph{normal} to \emph{superconducting} solutions \cite{FH-d, FH-b}  on simply-connected domains.

For all $h\geq 0$, we introduce the eigenvalue
\begin{equation}\label{eq:ev-AB}
\lambda_{\rm AB}(h,\Omega)=\inf\left\{\int_\Omega |(\nabla-ih\Fab)u|^2\,dx~: ~u\in H^1_{h\Fab}(\Omega;\Bbb C),\;\;\int_\Omega|u|^2\,dx=1\right\}\,.
\end{equation}
This is the eigenvalue of the Aharonov-Bohm operator, $-(\nabla-ih\Fab)^2$, defined by the Friedrichs theorem starting from the closed quadratic form (see \cite{BM})
$$
H^1_{h\Fab}(\Omega;\Bbb C)\ni u\mapsto \int_\Omega|(\nabla-ih\Fab)u|^2\,dx\,.$$
 Among the several existing self-adjoint extensions of the Aharonov-Bohm operator in  $L^2(\Omega;\Bbb C)$, we work in this paper with the Friedrichs extension \cite{AT, DS}. It has    compact resolvent, hence the eigenvalue $\lambda_{\rm AB}(h,\Omega)$ is in the discrete spectrum (see \cite{L}).\Bk

Notice that
\begin{equation}\label{eq:GL-ns}
\lambda_{\rm AB}(h,\Omega)<\kappa^2\implies {\rm every~\underline{minimizer}~of~}\Eab{~\rm is~a~\underline{superconducting}~solution}\,.
\end{equation}
This follows by using the test configuration $(tu_h,\Fab)$, with $t$ sufficiently small and $u_h$ an eigenfunction of $\lambda_{\rm AB}(h)$, so that
$$\Ea(h)\leq \Eab(tu_h,\Fab)<0\,.$$
The result in  Theorem~\ref{thm:AB-ns} below complements \eqref{eq:GL-ns}. Its statement involves the constant $C_*(\Omega)$ introduced below, whose definition is related to the following space
\begin{equation}\label{eq:H1-div}
 H^1_{n0}(\Omega,\div0)=\{\mathbf u\in H^1(\Omega;\R^2)~:~{\rm div}\,\mathbf u=0~{\rm in~}\Omega,\;\; \nu\cdot\mathbf u=0~{\rm on~}\partial \Omega\}\,,
\end{equation}
where $\nu$ is the unit outward normal vector on $\partial\Omega$.

We introduce  the following constant
\begin{equation}\label{eq:C*}
C_*(\Omega)= \frac{1}{\lambda^D(\Omega)}\left(2+\frac{|\Omega|^{1/2}}{m_*(\Omega)}\right),
\end{equation}
where
\begin{equation}\label{eq:m*}
m_*(\Omega)=\inf_{\ab\in { H^1_{n0}(\O,\div0)}\setminus\{0\}}\frac{\|\curl\ab\|_{L^2(\Omega)}^2}{\|\ab\|_{L^4(\Omega)}^2}\quad{\rm and}\quad \lambda^D(\Omega)=\inf_{u\in H^1_0(\Omega)\setminus\{0\}}\frac{\|\nabla u\|_{L^2(\Omega)}^2}{\|u\|_{L^2(\Omega)}^2}\,.
\end{equation}
In light of the {compact embedding $H^1(\Omega;\Bbb R^2)\hookrightarrow L^4(\Omega;\Bbb R^2)$} and the celebrated curl-div inequality  (see \cite[Prop.~D.2.1]{FH-b}),
$\|\ab\|_{H^1(\Omega)}\leq C\|\curl\ab\|_{L^2(\Omega)}$, holding in ${H^1_{n0}(\O,\div0)}$, we see that\break$m_*(\Omega)>0$.
That $\lambda^D(\Omega)>0$ is a consequence of the Poincar\'e inequality; this is also the principal eigenvalue of the Dirichelt Laplacian on $\Omega$.

\begin{thm}\label{thm:AB-ns}
If $\kappa$ and $h$ satisfy
$$
0<\kappa^2< \big(1+C_*(\Omega)\big)^{-1}\lambda_{\rm AB}(h,\Omega)\,,
$$
then every critical point $(\psi,\Ab)_{\kappa,h}$ of the functional $\Eab$ is a \emph{normal} solution.
\end{thm}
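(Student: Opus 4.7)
The plan is to establish the contrapositive: if $(\psi,\Ab)$ is a critical point of $\Eab$ with $\psi\not\equiv 0$, then $\lambda_{\rm AB}(h,\Omega)\le(1+C_*(\Omega))\kappa^2$. I start by writing the Euler--Lagrange system
\[
-(\nabla-ih\Ab)^2\psi=\kappa^2(1-|\psi|^2)\psi,\qquad h\,\curl\curl(\Ab-\Fab)=\mathrm{Im}\bigl(\bar\psi(\nabla-ih\Ab)\psi\bigr)
\]
in $\Omega$, together with the natural boundary conditions $\nu\cdot(\nabla-ih\Ab)\psi=0$ and $\curl(\Ab-\Fab)=0$ on $\partial\Omega$. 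A maximum-principle argument applied to $|\psi|^2$ gives the standard pointwise bound $\|\psi\|_\infty\le 1$, and pairing the first equation with $\psi$ (and integrating by parts using the magnetic Neumann condition) yields the a priori bound $\|(\nabla-ih\Ab)\psi\|_2^2\le\kappa^2\|\psi\|_2^2$.

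Next, I would gauge-fix by a Helmholtz decomposition (available since $\Omega$ is simply connected) to place $\ab:=\Ab-\Fab$ in $H^1_{n0}(\Omega,\div 0)$. There $\ab=\nabla^\perp\chi$ for some $\chi\in H^1_0(\Omega)$, so the Dirichlet Poincar\'e inequality produces $\|\ab\|_2\le\lambda^D(\Omega)^{-1/2}\|\curl\ab\|_2$, while the definition of $m_*(\Omega)$ directly gives $\|\ab\|_4\le m_*(\Omega)^{-1/2}\|\curl\ab\|_2$. Pairing the second Euler--Lagrange equation with $\ab$ and integrating by parts (the boundary term vanishes thanks to $\curl\ab=0$ on $\partial\Omega$) yields
\[
h\|\curl\ab\|_2^2=\int_\Omega\ab\cdot\mathrm{Im}\bigl(\bar\psi(\nabla-ih\Ab)\psi\bigr)\,dx,
\]
which, via H\"older with $\|\psi\|_\infty\le 1$ and the Dirichlet Poincar\'e estimate on $\ab$, collapses to $h\|\curl\ab\|_2\le\lambda^D(\Omega)^{-1/2}\|(\nabla-ih\Ab)\psi\|_2$.

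The heart of the argument is the algebraic identity
\[
\int_\Omega|(\nabla-ih\Fab)\psi|^2\,dx=\int_\Omega|(\nabla-ih\Ab)\psi|^2\,dx+2h^2\|\curl\ab\|_2^2+h^2\int_\Omega|\ab|^2|\psi|^2\,dx,
\]
obtained by expanding $(\nabla-ih\Fab)\psi=(\nabla-ih\Ab)\psi+ih\ab\psi$ and converting the cross term $2h\int_\Omega\ab\cdot\mathrm{Im}(\bar\psi(\nabla-ih\Ab)\psi)\,dx$ into $2h^2\|\curl\ab\|_2^2$ via the $\Ab$-equation. Bounding $2h^2\|\curl\ab\|_2^2\le\tfrac{2}{\lambda^D(\Omega)}\|(\nabla-ih\Ab)\psi\|_2^2$ by the previous step, and $h^2\int_\Omega|\ab|^2|\psi|^2\le h^2\|\ab\|_4^2\|\psi\|_4^2\le\tfrac{|\Omega|^{1/2}}{m_*(\Omega)\lambda^D(\Omega)}\|(\nabla-ih\Ab)\psi\|_2^2$ (using $\|\psi\|_4^2\le|\Omega|^{1/2}$ from $\|\psi\|_\infty\le 1$), the identity yields $\int_\Omega|(\nabla-ih\Fab)\psi|^2\,dx\le(1+C_*(\Omega))\kappa^2\|\psi\|_2^2$. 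Combining with the variational lower bound $\lambda_{\rm AB}(h,\Omega)\|\psi\|_2^2\le\int_\Omega|(\nabla-ih\Fab)\psi|^2\,dx$ from \eqref{eq:ev-AB} and dividing by $\|\psi\|_2^2>0$ contradicts the standing hypothesis, forcing $\psi\equiv 0$.

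The main obstacle, besides careful sign-tracking in the algebraic identity (any sign slip or suboptimal H\"older splitting destroys the precise constant $C_*(\Omega)$), is justifying the gauge reduction and the two integration-by-parts steps in the presence of the Aharonov--Bohm singularity at the origin; this requires working consistently in the space $H^1_{h\Fab}(\Omega;\mathbb C)$ defined through the Friedrichs extension so that all boundary contributions either vanish or combine into exactly the two terms $2/\lambda^D(\Omega)$ and $|\Omega|^{1/2}/(m_*(\Omega)\lambda^D(\Omega))$ making up $C_*(\Omega)$.
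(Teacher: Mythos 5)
Your proposal is correct and follows essentially the same route as the paper's proof: the same gauge reduction to $\ab\in H^1_{n0}(\Omega,\div0)$, the same a priori bounds $\|\psi\|_\infty\le 1$ and $\|(\nabla-ih\Ab)\psi\|_2\le\kappa\|\psi\|_2$, the same energy identity converting the cross term into $2h^2\|\curl\ab\|_2^2$ via the second Euler--Lagrange equation, the same H\"older splitting through $m_*(\Omega)$ and $|\Omega|^{1/2}$ yielding exactly $C_*(\Omega)$, and the min-max principle for $\lambda_{\rm AB}(h,\Omega)$ to conclude. The only cosmetic deviation is your bound on $\|\curl\ab\|_2$: you pair the second equation with $\ab$ and use $\|\ab\|_2\le\lambda^D(\Omega)^{-1/2}\|\curl\ab\|_2$ through the stream function, whereas the paper bounds $h\|\nabla\curl\ab\|_2\le\kappa\|\psi\|_2$ pointwise from the same equation and applies the Poincar\'e inequality to the scalar $\curl\ab\in H^1_0(\Omega)$ --- both give the identical estimate $h\|\curl\ab\|_2\le\lambda^D(\Omega)^{-1/2}\kappa\|\psi\|_2$, so the resulting constant is unchanged.
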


\begin{rem}\label{rem:k<l}
Based on the existing results in the case of a uniform applied magnetic field and large Ginzburg-Landau parameter {\cite{LuP, FH-d}}, one would  expect that the result in Theorem~\ref{thm:AB-ns}   holds asymptotically \Bk for $\kappa^2\leq \lambda_{\rm AB}(h,\Omega)$. However, in our present situation, the result is valid without restriction on the asymptotic behavior of $(\kappa,h)$, and for this reason, the minimizing magnetic potential $\Ab$ is no more close to the applied potential $\Fab$, so that the constant $C_*(\Omega)$ can not be neglected in our estimates. 
It would be desirable to prove the existence of a sharp constant $k_*(\Omega)$, independent of $h$, such that the conclusion in Theorem~\ref{thm:AB-ns} holds for $0<\kappa^2<k_*(\Omega)\lambda_{\rm AB}(h,\Omega)$ and fails otherwise.  \Bk
\end{rem}

\subsubsection*{Discussion of Theorems~\ref{thm:ms}\,\&\,\ref{thm:AB-ns}}\

For fixed $h$ we define the critical value of the Ginzburg-Landau parameter $\k$ by
\eq
\k_{c}(h)=\inf\{\k>0: \text{the global minimizers of $\Eab$ are non-trivial}\}.
\eeq
Since non-trivial minimizers yield a negative ground state energy $\Ea(h;\kappa)$ (see \eqref{eq:enAB}), we may express $\k_c(h)$ alternatively as follows
\[
\k_c(h)=\inf\{\kappa>0:~\Ea(h;\kappa)<0\}
\,.\]
Theorem~\ref{thm:AB-ns} yields that  $\kappa_c(h)>0$. The conclusion (2) of Theorem \ref{thm:ms} says that the function $h\mapsto\k_{c}(h)$ is $2\pi$-periodic. Recall that the value of $\k$ depends on the material of the sample and the environment temperature. In the case where the value of $\k$ depends monotonically on the temperature, the periodicity of $\k_{c}(h)$ suggests that, subjected to the applied magnetic field $\mathrm H=h\delta_0$ which is produced by the potential $h \Fab$, the critical temperature of the superconductor is periodic in $h$. This phenomenon is consistent with the periodicity of critical temperature of  a thin cylindrical superconductor in an axial magnetic field  \cite[Fig. 1]{LP}. The complete verification of the ``periodicity  in the quadratic background'' observed in the Little-Parks experiments \cite{LP} requires further study of the Ginzburg-Landau model with an applied magnetic field
$$\mathrm H=h(\delta_0+\curl\E)
$$
with $\curl\E\neq 0$. Here we mention an interesting progress  made in \cite{FP}.

The behavior of the eigenvalue $\lambda_{\rm AB}(h,\Omega)$, displayed in Theorem~\ref{thm:ev-AB} below, is reminiscent of the one in a domain with a single hole \cite{HOOO}.

\begin{thm}\label{thm:ev-AB}
The function $h\mapsto \lambda_{\rm AB}(h,\Omega)$ is continuous, $2\pi$-periodic and satisfies
$$
0=\lambda_{\rm AB}(0,\Omega)<\lambda_{\rm AB}(h)\leq \lambda_{\rm AB}(\pi,\Omega),\q {\forall\,h\in(0,2\pi)}\,.
$$
\end{thm}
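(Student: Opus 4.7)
The plan has three main ingredients: periodicity and reflection symmetry via gauge transformations; continuity, the identity at $h=0$, and strict positivity via a parallel-transport argument; and the upper bound by $\lambda_{\rm AB}(\pi,\Omega)$ via a cut reformulation. I will abbreviate $\mu(h):=\lambda_{\rm AB}(h,\Omega)$.

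For $2\pi$-periodicity and the reflection symmetry $\mu(h)=\mu(2\pi-h)$, the key is that for $n\in\Bbb Z$ the function $e^{in\theta}$ is single-valued and smooth on $\Omega\setminus\{0\}$ and satisfies $\nabla(e^{in\theta})=2\pi in\,e^{in\theta}\Fab$. A direct computation gives the intertwining
\[
(\nabla-i(h+2\pi n)\Fab)(e^{in\theta}u)=e^{in\theta}(\nabla-ih\Fab)u,
\]
so $u\mapsto e^{in\theta}u$ is a unitary isomorphism between $H^1_{h\Fab}(\Omega;\Bbb C)$ and $H^1_{(h+2\pi n)\Fab}(\Omega;\Bbb C)$ preserving the quadratic form; this gives $\mu(h+2\pi n)=\mu(h)$. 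Complex conjugation $u\mapsto\bar u$ analogously intertwines the fluxes $h$ and $-h$, so $\mu(h)=\mu(-h)$, which combined with periodicity yields $\mu(h)=\mu(2\pi-h)$.

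The identity $\mu(0)=0$ is realized by the constant test function. For continuity I would choose a simple curve $\Gamma$ joining $0$ to $\partial\Omega$ and perform the local gauge transform $\tilde u=e^{-ih\theta/(2\pi)}u$ on $\Omega\setminus\Gamma$, which recasts $\mu(h)$ as
\[
\mu(h)=\inf\Bigl\{\int_{\Omega\setminus\Gamma}|\nabla\tilde u|^2\,dx : \tilde u\in H^1(\Omega\setminus\Gamma;\Bbb C),\ \|\tilde u\|_{L^2}=1,\ \tilde u_+=e^{-ih}\tilde u_-\text{ on }\Gamma\Bigr\},
\]
so that all $h$-dependence is concentrated in the unimodular jump $e^{-ih}$, and continuity follows from a standard min-max argument (given a competitor at one value of $h$, multiply by a smooth phase to adjust the jump to a neighboring value with arbitrarily small cost in gradient norm). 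Strict positivity for $h\in(0,2\pi)$ uses the compact resolvent of the Friedrichs extension: a minimizer $u_h$ exists, and $\mu(h)=0$ would force $(\nabla-ih\Fab)u_h=0$ a.e., hence $u_h=c\,e^{ih\theta/(2\pi)}$ locally, contradicting the single-valuedness of $u_h$ around $0$ unless $e^{ih}=1$.

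The main obstacle is the inequality $\mu(h)\leq\mu(\pi)$ on $(0,2\pi)$, which in the cut formulation says that $-1$ is the most restrictive unimodular jump. In the spirit of \cite{HOOO}, I would exploit that the constraint $\tilde u_+=-\tilde u_-$ is real, so a real minimizer $\tilde u_\pi$ of the $h=\pi$ problem exists (concretely $\tilde u_\pi\sim r^{1/2}$ near the origin, by the Friedrichs boundary condition), and build a competitor for $\mu(h)$ by multiplying $\tilde u_\pi$ by a phase carrying the required extra jump $e^{i(\pi-h)}$ across $\Gamma$. The naive global choice $e^{i(\pi-h)\theta/(2\pi)}\tilde u_\pi$ fails on two counts: it contributes an $((\pi-h)/(2\pi))^2\int|\nabla\theta|^2|\tilde u_\pi|^2$ error term that pushes the inequality in the wrong direction, and this term is non-integrable near the origin in view of the $r^{1/2}$ decay. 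The hard part is therefore to localize the phase correction into a thin tubular neighborhood of $\Gamma$ away from $0$ and exploit the Euler--Lagrange optimality of $\tilde u_\pi$ to cancel the resulting first-order variation; this localization, rather than the periodicity or positivity portions of the theorem, is where the bulk of the analytic effort lies.
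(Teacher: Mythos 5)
Most of your peripheral steps are sound, and in places cleaner than the paper's. The unitary intertwining $(\nabla-i(h+2\pi n)\Fab)(e^{in\theta}u)=e^{in\theta}(\nabla-ih\Fab)u$ is correct (it is the same mechanism the paper uses for the zero modes in \eqref{eq:HOOO*} and in the periodicity part of Theorem~\ref{thm:ms}), and your holonomy argument for strict positivity --- a zero-energy minimizer would be locally $c\,e^{ih\theta/(2\pi)}$, with monodromy $e^{ih}\neq1$ forcing $c=0$ --- is a legitimate alternative to the paper's quantitative route via the disc bound of \cite{KP} and a mixed Neumann--Dirichlet eigenvalue. Your continuity argument via the cut reformulation works at $h_0\notin2\pi\Z$, because the cost of adjusting the jump is controlled by $\int_\Omega|u|^2/r^2\,dx$, finite by the magnetic Hardy bound of Lemma~\ref{lem:1D-op} (cf.\ Remark~\ref{rem:ev-R2}); but note that at $h_0\in2\pi\Z$ the ``standard min-max'' degenerates: adjusting the jump of the natural competitor (the constant, or $e^{in_0\theta}$) costs $\sim|h-h_0|^2\int_\Omega r^{-2}\,dx=+\infty$, and one needs a quasi-mode vanishing like $r^p$ at the origin, exactly the paper's $w_\varepsilon$ in Step~5 of its proof. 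That is a fixable but real omission as stated.

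The genuine gap is the central inequality $\lambda_{\rm AB}(h,\Omega)\leq\lambda_{\rm AB}(\pi,\Omega)$, which you explicitly leave open. Your proposed repair --- localize the phase correction near $\Gamma$ away from $0$ and ``exploit the Euler--Lagrange optimality of $\tilde u_\pi$ to cancel the resulting first-order variation'' --- is not an argument: the modified function is not an admissible variation within the $h=\pi$ problem (it lives in a different flux sector), so stationarity of $\tilde u_\pi$ gives no cancellation, and the first-order term has no reason to vanish. The mechanism that actually makes this inequality work in \cite{HOOO} is different: at half flux the problem is time-reversal symmetric, the real ground state has a nodal line joining the hole to the outer boundary, and the phase jump needed to reach flux $h$ is inserted \emph{along the nodal set}, at zero energy cost. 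In the Aharonov--Bohm setting the ``hole'' is a point and the Friedrichs ground state behaves like $r^{1/2}$ there, so whether a nodal line reaches the puncture is precisely the delicate issue --- which is why the paper does not prove the inequality directly for the AB operator at all. Instead it introduces the punctured domains $\Omega_\varepsilon=\Omega\setminus\overline{D(0,\varepsilon)}$, imports $\lambda_\varepsilon(h)\leq\lambda_\varepsilon(\pi)$ and the $2\pi$-periodicity from \cite[Thm.~1.1]{HOOO} (where the hole has positive radius and the nodal-line argument is available), and then proves the spectral convergence $\lambda_\varepsilon(h,\Omega)\to\lambda_{\rm AB}(h,\Omega)$: restriction of the AB ground state for one inequality, and for the other a diagonal extraction with elliptic estimates plus the non-concentration bound $\|u_{\varepsilon_n}\|^2_{L^2(D(0,r))}\lesssim r^2$, again furnished by Lemma~\ref{lem:1D-op}. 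Without either a complete adaptation of the nodal-line argument to the puncture or such an approximation scheme, your proof of the key upper bound remains incomplete.
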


In the special case where $\Omega$ is the disc $D(0,R)$, Theorem~\ref{thm:ev-AB} follows easily by decomposition into Fourier modes and separation of variables \cite[Prop.~2.1]{KP}.

Combining the results in \eqref{eq:GL-ns}, Theorems~\ref{thm:AB-ns} and \ref{thm:ev-AB}, we can display the oscillations in the critical points of the functional $\Eab$.

\begin{corol}\label{corol:AB}
Assume that $\kappa$ satisfies
$$
0<\kappa^2<(1+C_*(\Omega))^{-1}\lambda_{\rm AB}(\pi,\Omega).
$$
Consider the sequence $(h_n= \pi n)_{n\geq 1}$. It holds the following.
\begin{enumerate}
\item If $n$ is even, then every minimizer $(\psi,\Ab)_{\kappa, h_n}$ of $\Eab$ is a \emph{superconducting} solution.\medskip
\item If $n$ is odd, then every critical point $(\psi,\Ab)_{\kappa, h_n}$ of $\Eab$ is a \emph{normal} solution.
\end{enumerate}
\end{corol}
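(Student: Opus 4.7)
The corollary is a direct synthesis of Theorems~\ref{thm:ms}, \ref{thm:AB-ns}, and \ref{thm:ev-AB} together with the simple observation \eqref{eq:GL-ns}; there is no genuinely new estimate to prove, so the plan is essentially a bookkeeping argument, with the only care being the use of $2\pi$-periodicity to reduce $h_n=\pi n$ to the generator $h=\pi$ of its residue class modulo $2\pi$.

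For part (1), when $n$ is even write $h_n=2\pi m$ with $m\in\mathbb Z_{\geq 1}$, so that $h_n\in 2\pi\mathbb Z$. I would invoke Theorem~\ref{thm:ms}(3) to conclude that
\[
\Ea(h_n)=-\frac{\kappa^2}{2}|\Omega|<0,
\]
using only that $\kappa>0$ and $|\Omega|>0$. Then I would rule out normal minimizers by direct inspection of \eqref{eq:GL-AB}: for any configuration of the form $(0,\Ab)$ one has
\[
\Eab(0,\Ab)=h_n^2\int_\Omega |\curl(\Ab-\Fab)|^2\,dx\geq 0,
\]
which is strictly larger than $\Ea(h_n)$. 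Hence no minimizer can satisfy $\psi\equiv 0$, i.e.\ every minimizer is superconducting.

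For part (2), when $n$ is odd we have $h_n=\pi+2\pi k$ with $k\in\mathbb Z_{\geq 0}$. Here the key step is to use the $2\pi$-periodicity of $h\mapsto\lambda_{\rm AB}(h,\Omega)$ from Theorem~\ref{thm:ev-AB} to identify
\[
\lambda_{\rm AB}(h_n,\Omega)=\lambda_{\rm AB}(\pi,\Omega).
\]
The hypothesis on $\kappa$ then becomes $\kappa^2<(1+C_*(\Omega))^{-1}\lambda_{\rm AB}(h_n,\Omega)$, which is precisely the hypothesis of Theorem~\ref{thm:AB-ns}. Applying that theorem with $h=h_n$ forces every critical point $(\psi,\Ab)_{\kappa,h_n}$ of $\Eab$ to be a normal solution, completing the proof.

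Since both parts are immediate consequences of previously proved statements, there is no substantial obstacle; the only conceptual point worth underlining in the write-up is that the $2\pi$-periodicity provided by Theorems~\ref{thm:ms}(2) and~\ref{thm:ev-AB} allows the threshold in the hypothesis to be stated once and for all in terms of $\lambda_{\rm AB}(\pi,\Omega)$, and this is what produces the advertised oscillation between superconducting minimizers (at even $n$) and normal critical points (at odd $n$).
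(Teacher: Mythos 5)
Your proposal is correct, and part (2) coincides exactly with the paper's argument: use the $2\pi$-periodicity of $h\mapsto\lambda_{\rm AB}(h,\Omega)$ from Theorem~\ref{thm:ev-AB} to identify $\lambda_{\rm AB}(h_n,\Omega)=\lambda_{\rm AB}(\pi,\Omega)$ for odd $n$, then apply Theorem~\ref{thm:AB-ns}. For part (1) you take a slightly different route: the paper invokes the criterion \eqref{eq:GL-ns}, noting that for even $n$ one has $\lambda_{\rm AB}(h_n,\Omega)=0<\kappa^2$ (again by Theorem~\ref{thm:ev-AB}), so that the test configuration $(tu_{h_n},\Fab)$ built from an eigenfunction makes the energy negative; you instead quote Theorem~\ref{thm:ms}(3) (equivalently Proposition~\ref{prop:deg-case}) to get the exact value $\Ea(h_n)=-\frac{\kappa^2}{2}|\Omega|<0$ and compare with $\Eab(0,\Ab)\geq 0$ for any normal configuration. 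Both arguments reduce to certifying $\Ea(h_n)<0$, so the logical skeleton is the same; the paper's version via \eqref{eq:GL-ns} is more flexible (it rules out normal minimizers whenever $\lambda_{\rm AB}(h,\Omega)<\kappa^2$, not only at $h\in2\pi\Z$), while yours is marginally more self-contained at the degenerate values, exploiting the explicit zero mode $e^{i\frac{h}{2\pi}\theta}$ and even yielding the sharper information that minimizers there satisfy $|\psi|=1$. Your closing remark correctly identifies the role of periodicity in stating the threshold once through $\lambda_{\rm AB}(\pi,\Omega)$; no gaps.
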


In the \emph{disc}  case, we recover the result obtained in our previous work \cite{KP}. However the result  in \cite{KP} is valid {on a disc} under the weaker condition when $\kappa^2<c_*^2\lambda_{\rm AB}(\pi,D(0,R))$, where $c_*$ is a  constant satisfying $c_*^2< 1+C_*(D(0,R))$, see \eqref{eq:C*}.

 One more application of Theorem~\ref{thm:ev-AB} concerns the stability of  the normal solution, i.e. whether it is a local minimizer of the functional $\Eab$ (so far we were concerned whether  it is a global minimizer). In Corollary~\ref{cor:ev-AB-s-n} below, we see  oscillations of the stability of the normal solution as $h$ varies.

\begin{corol}\label{cor:ev-AB-s-n}
We introduce the following critical Ginzburg-Landau parameter
$$\k_c^{\rm loc}(h)=\sqrt{\lambda_{\rm AB}(h;\Omega)}\,.$$ 
Then, the following properties hold.
\begin{enumerate}
\item For $\kappa\leq \k_c^{\rm loc}(h)$, the normal solution is a local minimizer of $\Eab$ \,.
\item For $\kappa>\k_c^{\rm loc}(h)$, the normal solution is not a local minimizer of $\Eab(h)$, and every global minimizer is non-trivial\,.
\item The function $h\mapsto\k_c^{\rm loc}(h)$ is $2\pi$-periodic.
\end{enumerate}
\end{corol}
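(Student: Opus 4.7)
\textbf{Part (3)} is immediate: since $\k_c^{\rm loc}(h)=\sqrt{\lambda_{\rm AB}(h,\Omega)}$, the $2\pi$-periodicity follows from Theorem~\ref{thm:ev-AB}.

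\textbf{Part (2).} Assume $\kappa^2>\lambda_{\rm AB}(h,\Omega)$. I plan to exhibit a test configuration with negative energy. Since the Aharonov--Bohm operator has compact resolvent, $\lambda_{\rm AB}(h,\Omega)$ is attained by an $L^2$-normalized eigenfunction $u_h\in H^1_{h\Fab}(\Omega;\C)$; by the diamagnetic inequality and the 2D Sobolev embedding, $u_h\in L^4(\Omega)$. Testing with $(tu_h,\Fab)$ gives
\[\Eab(tu_h,\Fab)=t^2\bigl(\lambda_{\rm AB}(h,\Omega)-\kappa^2\bigr)+\tfrac{\kappa^2}{2}t^4\|u_h\|_{L^4(\Omega)}^4,\]
which is negative for small $t>0$. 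Any normal critical point is gauge equivalent to $(0,\Fab)$---because the Euler--Lagrange equation together with the natural boundary condition force $\curl(\Ab-\Fab)=0$, which in turn is exact on the simply connected $\Omega$---and so has energy $0$. Therefore $(0,\Fab)$ is not a local minimizer, and the strictly negative infimum forces every global minimizer to be non-trivial.

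\textbf{Part (1).} Assume $\kappa^2\leq\lambda_{\rm AB}(h,\Omega)$. I write any competitor as $(\psi,\Fab+\mathbf a)$ with $\mathbf a\in H^1(\Omega;\R^2)$, and exploit gauge invariance to place $\mathbf a\in H^1_{n0}(\Omega,\div 0)$. Expanding the kinetic term and reorganizing gives
\[\Eab(\psi,\Fab+\mathbf a)=\Bigl(\int_\Omega|(\nabla-ih\Fab)\psi|^2\,dx-\kappa^2\|\psi\|_{L^2}^2+h^2\|\curl\mathbf a\|_{L^2}^2\Bigr)+\tfrac{\kappa^2}{2}\|\psi\|_{L^4}^4+h^2\int_\Omega|\mathbf a|^2|\psi|^2\,dx+\mathcal R,\]
where $\mathcal R=-2h\,\mathrm{Im}\int_\Omega\bar\psi\,\mathbf a\cdot(\nabla-ih\Fab)\psi\,dx$. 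The bracketed quadratic form is non-negative by the variational characterization of $\lambda_{\rm AB}(h,\Omega)$, and the next two terms are manifestly non-negative. The cross term $\mathcal R$ is the main obstacle: it is controlled by Cauchy--Schwarz and H\"older,
\[|\mathcal R|\leq 2h\|(\nabla-ih\Fab)\psi\|_{L^2}\|\mathbf a\|_{L^4}\|\psi\|_{L^4},\]
together with the div-curl inequality $\|\mathbf a\|_{H^1}\leq C\|\curl\mathbf a\|_{L^2}$ on $H^1_{n0}(\Omega,\div 0)$, the Sobolev embedding $H^1(\Omega)\hookrightarrow L^4(\Omega)$, and Young's inequality, which absorb $|\mathcal R|$ into the non-negative quartic-in-$\psi$ and $h^2\|\curl\mathbf a\|_{L^2}^2$ contributions once $\|\psi\|_{H^1_{h\Fab}}+\|\mathbf a\|_{H^1}$ is small enough. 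The critical case $\kappa^2=\lambda_{\rm AB}(h,\Omega)$ requires particular care since the quadratic form in $\psi$ then has a zero mode along the ground state, and local minimality in that direction must be rescued by the positive $L^4$-term.
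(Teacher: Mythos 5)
Parts (2) and (3) of your proposal are correct and essentially identical to the paper's own argument: item (3) is read off from Theorem~\ref{thm:ev-AB}, and item (2) is exactly the test-configuration computation behind \eqref{eq:GL-ns}, namely $\Eab(tu_h,\Fab)=t^2\big(\lambda_{\rm AB}(h,\Omega)-\kappa^2\big)+\frac{\kappa^2}{2}t^4\|u_h\|_{L^4(\Omega)}^4<0$ for $t$ small. Your supplementary observation that every normal critical point has zero energy (the second equation in \eqref{eq:GLeq} together with $\curl\ab=0$ on $\partial\Omega$ forces $\curl\ab\equiv0$) is correct and makes explicit a step the paper leaves implicit.

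For part (1) you depart from the paper, and this is where the gap lies. The paper argues softly: it computes the second variation \eqref{eq:loc-min} and concludes local minimality from the (non-strict) positivity of this quadratic form, which holds precisely for $\kappa^2\leq\lambda_{\rm AB}(h,\Omega)$ by the min-max principle; your direct expansion of the full energy is a stronger, genuinely nonlinear claim. Two problems arise. First, a bookkeeping slip: after Cauchy--Schwarz and Young, the cross term $\mathcal R$ leaves a piece of size $C\,\|(\nabla-ih\Fab)\psi\|_{L^2}^2\|\psi\|_{L^4}^2$, which has the wrong homogeneity to be absorbed by the quartic $\frac{\kappa^2}{2}\|\psi\|_{L^4}^4$; it can only be absorbed by the spectral-gap contribution $\big(1-\kappa^2/\lambda_{\rm AB}(h,\Omega)\big)\|(\nabla-ih\Fab)\psi\|_{L^2}^2$, so your scheme works only for the strict inequality $\kappa<\k_c^{\rm loc}(h)$. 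Second, and more seriously, at the endpoint $\kappa=\k_c^{\rm loc}(h)$ the difficulty is not only the zero mode in $\psi$: that mode couples to the magnetic perturbation at the same quartic order. Taking $\psi=tu_h$ and $\ab=\gamma t^2\mathbf b$ with $\mathbf b\in H^1_{n0}(\Omega,\div0)$, one finds
$\Eab(\psi,\Fab+\ab)=t^4\Big(\tfrac{\kappa^2}{2}\|u_h\|_{L^4}^4-2h\gamma\int_\Omega\mathbf b\cdot\mathbf j_h\,dx+h^2\gamma^2\|\curl\mathbf b\|_{L^2}^2\Big)+O(t^6)$,
where $\mathbf j_h=\mathrm{Im}\big(\overline{u_h}(\nabla-ih\Fab)u_h\big)$ is the ground-state current, which in general does not vanish for $h\notin2\pi\Z$ (in the disc it is azimuthal and nontrivial). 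Optimizing over $\gamma$, positivity at order $t^4$ requires $\sup_{\mathbf b}\big(\int_\Omega\mathbf b\cdot\mathbf j_h\,dx\big)^2/\|\curl\mathbf b\|_{L^2}^2\leq\frac{\kappa^2}{2}\|u_h\|_{L^4}^4$, a comparison of constants that your $L^4$-term alone does not deliver. So the critical case is not merely ``particular care'': as set up, your argument cannot close it. To match the corollary as stated, either retreat at the endpoint to the paper's criterion --- non-negativity of the Hessian \eqref{eq:loc-min}, which is all the paper actually establishes there --- or restrict your stronger, fully nonlinear local-minimality claim to $\kappa<\k_c^{\rm loc}(h)$.
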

The hessian of the functional $\Eab$ near the normal solution $(0,\Fab)$ is given by the quadratic form
\begin{equation}\label{eq:loc-min}
(\varphi,\ab)\mapsto \frac{d^2}{d\epsilon^2}\Eab(\epsilon\varphi,\Fab+\epsilon\ab)\Big|_{\epsilon=0}= 2\int_\Omega \Big(|(\nabla-ih\Fab)\varphi|^2-\kappa^2|\varphi|^2+h^2|\curl\ab|^2\Big)\,dx\,.
\end{equation}

Item~1 in Corollary~\ref{corol:AB} then follows since the quadratic form in \eqref{eq:loc-min} will be positive. Item~2 follows by \eqref{eq:GL-ns}. Item~3 follows from Theorem~\ref{thm:ev-AB}.
\Bk 

\subsection{Lack of strong diamagnetism for the magnetic step model}

 We return back to the magnetic step model introduced in Sec.~\ref{sec:m-step}, which, by Theorem~\ref{thm:ms}, converges to the Aharonov-Bohm model. It is then natural to see normal-superconducting oscillations in the same manner observed in Corollary~\ref{corol:AB}.   

We introduce the eigenvalue $\lambda(h\Fb_\varepsilon,\Omega)$ as follows
\begin{equation}\label{eq:ev-ms}
\lambda(h\Fb_\varepsilon,\Omega)=\inf\{ \|(\nabla-ih\Fb_\varepsilon)u\|^2_{L^2(\Omega)}~:~u\in H^1(\Omega;\C)~\&~\|u\|_{L^2(\Omega)}=1\}\,,
\end{equation} 
where $\Fb_\varepsilon$ is the magnetic potential in \eqref{eq:mstep}.

The next theorem is the magnetic step analogue of the results in Theorem~\ref{thm:ev-AB} and Corollary~\ref{corol:AB}, and is in fact consistent with the conclusion of Theorem~\ref{thm:ms}.

\begin{thm}\label{thm:osi-ms}
There exists $\varepsilon_0>0$ such that, for all $\varepsilon\in(0,\varepsilon_0)$, the following holds.
\begin{enumerate}
\item The function $h\mapsto \lambda(h\Fb_\varepsilon,\Omega)$ is not monotone increasing.
\item Assume that 
$
0<\kappa^2<(1+C_*(\Omega))^{-1}\lambda_{\rm AB}(\pi,\Omega)$, where $C_*(\Omega)$ and $\lambda_{\rm AB}(\pi,\Omega)$ are introduced in \eqref{eq:C*} and \eqref{eq:ev-AB} respectively. Let $h_n= \pi n$ with $n$ a positive integer. Then, for $\varepsilon$ sufficiently small:
\begin{enumerate}
\item If $n$ is even,  every minimizer $(\psi_\varepsilon,\Ab_\varepsilon)_{\kappa, h_n}$ of $\Eef$ is a \emph{superconducting} solution.
\item If $n$ is odd,  every critical point $(\psi_\varepsilon,\Ab_\varepsilon)_{\kappa, h_n}$ of $\Eef$ is a \emph{normal} solution.
\end{enumerate}
\end{enumerate}
\end{thm}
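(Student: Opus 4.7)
The strategy is to transfer the Aharonov-Bohm results (Theorem~\ref{thm:ev-AB} and Corollary~\ref{corol:AB}) to the regularized setting by taking the limit $\varepsilon\to 0_+$. The central tool I would develop is the eigenvalue convergence
\begin{equation}\label{eq:ev-conv-plan}
\lim_{\varepsilon\to 0_+}\lambda(h\Fb_\varepsilon,\Omega)=\lambda_{\rm AB}(h,\Omega)\qquad\text{for every fixed } h\geq 0,
\end{equation}
which I expect to prove by matching variational bounds. For the upper bound, I would take an $L^2$-normalized minimizer $u$ for $\lambda_{\rm AB}(h,\Omega)$, multiply it by a radial cutoff vanishing on $D(0,\varepsilon)$, and use the resulting function as a test in the Rayleigh quotient \eqref{eq:ev-ms}; the error is controlled because $\Fb_\varepsilon\equiv\Fab$ outside $D(0,\varepsilon)$ and because elements of the Friedrichs form domain of $\Fab$ decay sufficiently at the origin. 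For the lower bound, I would extract a weak limit of $L^2$-normalized minimizers $u_\varepsilon$, using uniform $H^1_{\rm loc}(\Omega\setminus\{0\})$ bounds away from $0$, and pass to the limit by lower semicontinuity of the magnetic Dirichlet form.

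Granted \eqref{eq:ev-conv-plan}, part (1) follows directly from Theorem~\ref{thm:ev-AB}: since $\lambda_{\rm AB}(\pi,\Omega)>0=\lambda_{\rm AB}(0,\Omega)=\lambda_{\rm AB}(2\pi,\Omega)$, for $\varepsilon$ sufficiently small one has $\lambda(2\pi\Fb_\varepsilon,\Omega)<\lambda(\pi\Fb_\varepsilon,\Omega)$, which rules out monotone increase. For part (2a) with $h_n=\pi n$ and $n$ even, Theorem~\ref{thm:ms} parts (1) and (3) yield $\Ee(h_n)\to\Ea(h_n)=-\frac{\kappa^2}{2}|\Omega|<0$, while any normal configuration $(0,\Ab)$ satisfies $\Eef(0,\Ab)\geq 0$; hence for $\varepsilon$ small every minimizer of $\Eef$ must be superconducting.

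For part (2b) with $n$ odd, I would first establish the magnetic step analog of Theorem~\ref{thm:AB-ns}: if $\kappa^2<(1+C_*(\Omega))^{-1}\lambda(h\Fb_\varepsilon,\Omega)$, then every critical point of $\Eef$ is a normal solution. This should follow by repeating the scheme of Theorem~\ref{thm:AB-ns} with $\Fab$ replaced by $\Fb_\varepsilon$: the $L^\infty$ bound on $\psi$ and the curl-div estimate for $\Ab-\Fb_\varepsilon$ carry over since, for each fixed $\varepsilon>0$, $\Fb_\varepsilon\in H^1(\Omega;\R^2)$ is a bounded potential. By the $2\pi$-periodicity of $\lambda_{\rm AB}$ and \eqref{eq:ev-conv-plan}, $\lambda(h_n\Fb_\varepsilon,\Omega)\to\lambda_{\rm AB}(\pi,\Omega)$, so the hypothesis on $\kappa$ secures the sufficient condition for $\varepsilon$ small enough.

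The hard part is the lower bound in \eqref{eq:ev-conv-plan}: one has to prevent concentration of the normalized minimizers $u_\varepsilon$ at the origin and show that any weak limit inherits the singular boundary behavior dictated by the Friedrichs extension of the Aharonov-Bohm operator. This is precisely the step where the smooth magnetic field concentrated on $D(0,\varepsilon)$ must emulate, in the limit, a topological flux through $0$, and is the genuinely new technical point that distinguishes the regularized model from the Aharonov-Bohm one.
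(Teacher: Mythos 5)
Your overall architecture coincides with the paper's: prove the eigenvalue convergence $\lim_{\varepsilon\to0_+}\lambda(h\Fb_\varepsilon,\Omega)=\lambda_{\rm AB}(h,\Omega)$ (the paper's \eqref{eq:ev-AB-like}), deduce part (1) from Theorem~\ref{thm:ev-AB}, and run part (2b) by replaying the scheme of Theorem~\ref{thm:AB-ns} with $\Fab$ replaced by $\Fb_\varepsilon$ (legitimate, since $C_*(\Omega)$ does not depend on the potential); your route to (2a) via the energy convergence $\Ee(h_n)\to-\frac{\kappa^2}{2}|\Omega|<0$ of Theorem~\ref{thm:ms} is a sound small variant of the paper's argument, which instead uses $\lambda(h_n\Fb_\varepsilon,\Omega)\to0<\kappa^2$ and the test-configuration criterion of \eqref{eq:GL-ns}. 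The genuine gap is exactly the step you defer as ``the hard part'': the lower bound $\liminf_{\varepsilon\to0_+}\lambda(h\Fb_\varepsilon,\Omega)\geq\lambda_{\rm AB}(h,\Omega)$ for $h\notin2\pi\Z$. Weak lower semicontinuity alone only yields $\liminf\lambda(h\Fb_\varepsilon,\Omega)\geq\lambda_{\rm AB}(h,\Omega)\|u_*\|_{L^2(\Omega)}^2$, which is useless if the normalized minimizers $u_\varepsilon$ concentrate at the origin; a complete proof must exclude this, and the paper does so with two concrete devices you do not supply. First, since $\Fb_\varepsilon=\Fab$ on $\Omega_\varepsilon=\Omega\setminus\overline{D(0,\varepsilon)}$, restricting $u_\varepsilon$ to $\Omega_\varepsilon$ and using the min-max principle gives \eqref{eq:ev-AB-like-lb*}, namely $\lambda(h\Fb_\varepsilon,\Omega)\geq\lambda_\varepsilon(h,\Omega)\int_{\Omega_\varepsilon}|u_\varepsilon|^2\,dx$, where $\lambda_\varepsilon(h,\Omega)$ is the perforated-domain eigenvalue \eqref{eq:ev-l-ep}, already shown in the proof of Theorem~\ref{thm:ev-AB} to converge to $\lambda_{\rm AB}(h,\Omega)$ (see \eqref{eq5.10}). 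Second, non-concentration follows from the one-dimensional angular bound of Lemma~\ref{lem:1D-op}: on any annulus $D(0,r)\setminus D(0,\varepsilon)$ one has, in polar coordinates,
\[
\frac{\alpha(h)^2}{r^2}\,\|u_\varepsilon\|^2_{L^2(D(0,r)\setminus D(0,\varepsilon))}\leq \|(\nabla-ih\Fab)u_\varepsilon\|^2_{L^2(D(0,r)\setminus D(0,\varepsilon))}\leq \lambda(h\Fb_\varepsilon,\Omega)\,,
\]
so the annular mass is $O(r^2)$ uniformly in $\varepsilon$ (the mass in $D(0,\varepsilon)$ itself vanishes, e.g.\ via the uniform $L^4$ bound on $|u_\varepsilon|$ coming from the diamagnetic inequality), whence $\|u_*\|_{L^2(\Omega)}=1$. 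Also, your worry that the weak limit must ``inherit the singular boundary behavior dictated by the Friedrichs extension'' overstates the difficulty: once $(\nabla-ih\Fab)u_*\in L^2(\Omega;\C^2)$, the function $u_*$ automatically lies in $H^1_{h\Fab}(\Omega;\C)$, which is precisely the Friedrichs form domain, so no extra matching condition at $0$ needs to be verified.

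A smaller but real flaw lies in your justification of the upper bound. For $h\notin2\pi\Z$ no cutoff is needed at all: by Remark~\ref{rem:ev-R2} the AB ground state $u$ already belongs to $H^1(\Omega;\C)\cap L^2(\Omega;|\Fab|^2dx)$, hence is admissible in \eqref{eq:ev-ms}, and since $|\Fb_\varepsilon|\leq|\Fab|$ on $D(0,\varepsilon)$ the error vanishes by dominated convergence (if you do insist on a cutoff, the Hardy-type bound $\int_\Omega|\Fab u|^2\,dx<\infty$ is what controls it, provided the cutoff profile satisfies $|\nabla\chi_\varepsilon|\lesssim 1/|x|$). But for $h\in2\pi\Z$ your stated reason --- decay of form-domain elements at the origin --- is false: the ground state is $e^{in_0\theta}$, of modulus one near $0$. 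A cutoff transitioning at scale $\varepsilon$ (vanishing on $D(0,\varepsilon)$, equal to $1$ at distance $\sim\varepsilon$) then produces an $O(1)$ gradient error, not $o(1)$; the smallness must instead come from the vanishing logarithmic capacity of the shrinking disc, via a capacity-type profile such as the paper's $\chi_{\varepsilon,p}(r)=(r/\sqrt{\varepsilon})^p$ in \eqref{eq:t-state-w-ep} (or a logarithmic cutoff), giving an error $O(p)+O_p(\varepsilon^{p})$ removed by sending $\varepsilon\to0$ and then $p\to0$. With these two repairs --- the perforated-eigenvalue comparison plus the Lemma~\ref{lem:1D-op} non-concentration estimate, and the correct degenerate-case trial state --- your plan becomes essentially the paper's proof.
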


The first part in Theorem~\ref{thm:osi-ms} is a new counterexample of strong diamagnetism in a general simply connected domain. The second part displays oscillations in the Little-Parks framework for general simply connected domains too. Such phenomena where observed in disc domains \cite{FP, KS} or in tubular non-simply connected domains \cite{HK}.  \Bk

\section{Magnetic Sobolev space}\label{sec:vs}

\subsection{Hypotheses}\label{sec:hyp}

Throughout this section, we assume that
\begin{itemize}
\item $U\subset \R^2$ is   open\Bk, bounded, simply connected and with a {$C^2$} boundary\,;
\item $I=\{x_1,\cdots,x_N\}\subset U$\,;
\item $U_N=U\setminus\{x_1,\cdots,x_N\}$\,;
\item  {$\fb\in L^q(U;\Bbb R^2)\cap L^2_{\rm loc}(U_N;\Bbb R^2)$ is a given vector field}, with $1<q<2$\,;
\item $\exists\,\varepsilon_0\in(0,1)$, $\forall\,i\in\{1,\cdots,N\}$, $D(x_i,\varepsilon_0)\subset U$\,.
\end{itemize}
For all $\varepsilon\in(0,\varepsilon_0)$, we set $I_\varepsilon=\bigcup\limits_{i=1}^N \overline{D(x_i,\varepsilon)}$\,; clearly $I_\varepsilon\subset U$.

\subsection{Definition of the magnetic Sobolev space}

If $\psi\in L^2(U;\C)$ and $\fb\in L^2_{\rm loc}(U;\R^2)$, then {$\fb\psi\in L^1_{\rm loc}(U;\Bbb C^2)$} and can be viewed as a distribution, i.e { $\fb\psi\in\mathcal D'(U;\Bbb C^2)$.} This allows us to define the magnetic Sobolev space { $H^1_{\mathbf f}(U;\Bbb C)$} in a straightforward manner. For a function  { $\psi\in L^2(U;\Bbb C)$} to be in { $H^1_{\fb}(U;\Bbb C)$,} we simply demand that the distribution
$\nabla\psi-i\fb\psi$ is  a function { belonging to $L^2(U;\Bbb C^2)$.}

As long as { $\fb\not\in L^2_{\rm loc}(U;\Bbb R^2)$,} we can not insure any more that { $\fb\psi\in \mathcal D'(U;\Bbb C^2)$,} and the condition $\nabla\psi-i\fb\psi\in { L^2(U;\Bbb C^2)}$ will be meaningless, since we can not assign a distributional sense of $\nabla\psi-i\fb\psi$ in the whole domain $U$.

However, working under the hypotheses in Sec.~\ref{sec:hyp},  we can define the  magnetic Sobolev space
{ $H^1_{\fb}(U_N;\Bbb C)$}
since we know that { $\fb\in L^2_{\rm loc}(U_N;\Bbb R^2)$}, where $U_N=U\setminus \{x_1,\cdots,x_N\}$. Note that we do not distinguish between the spaces  $L^2(U)$ and $L^2(U_N)$, because the set $I=\{x_1,\cdots,x_N\}$ is finite.

Let us examine more closely  this particular situation.
Pick { $\psi\in L^2(U;\Bbb C)$; since $\fb\in L^2_{\rm loc}(U_N;\Bbb R^2)$,} we can view $\fb\psi$ as a distribution on $U_N$, i.e. { $\fb\psi\in \mathcal D'(U_N;\Bbb C^2)$;  the condition $(\nabla-i\fb)\psi\in L^2(U;\Bbb C^2)$} then means

 \begin{equation}\label{eq:cond-mag-der}
 {\exists\,\mathbf g\in L^2(U;\Bbb C^2),~\forall\,\varphi\in C_c^\infty(U_N;\Bbb C),~\int_{U} \psi (\p_j+i\fb_j)\varphi\,dx=-\int_{U}g_j\varphi\,dx\,,\q j=1,2.
 }
\end{equation}
The problem is that we can not utilise a test function { $\varphi\in C_c^\infty(U;\Bbb C)$,} since the integral of $\fb\psi\varphi$ on $U$ would not make sense.

That is the motivation for the following general definition of the magnetic Sobolev space.

\begin{defn}\label{def:H1-mag}
Under the Hypotheses of  Sec.~\ref{sec:hyp}, we define the corresponding magnetic Sobolev space on $U$ as follows
$$
{ H^1_{\fb}(U;\Bbb C)=\{\psi\in L^2(U;\Bbb C)}~:~\eqref{eq:cond-mag-der}~{\rm holds}\}\,.$$
\end{defn}

\begin{rem}\label{rem:mag-H1}  The characterization of Sobolev spaces by means of the notion of absolute continuity on lines  yields the pleasant property that
$W^{1,q}(U)=W^{1,q}(U_N)$ for all $q\in[1,+\infty)$ (see \cite[Thm.~6.1.3]{Hetal}). Consequently, if {$\fb\in L^2_{\rm loc}(U;\Bbb R^2)$,} then Definition~\ref{def:H1-mag} coincides with the usual one, i.e. the following condition holds for { $\psi\in H^1_\fb(U;\Bbb C)$} (compare with \eqref{eq:cond-mag-der}):
\begin{equation}\label{eq:cond-mag-der*}
{
\exists\,\mathbf g\in L^2(U;\Bbb C^2),~\forall\,\varphi\in C_c^\infty(U;\Bbb C),~\int_{U} \psi (\p_j+i\fb_j)\varphi\,dx=-\int_{U}g_j\varphi\,dx\,,\q j=1,2\;.
}
\end{equation}
Indeed, supposing  that { $\psi\in L^2(U;\Bbb C)$ such that  $\mathbf g=(\nabla-i\fb)\psi$ in $\mathcal D'(U_N;\Bbb C^2)$,  we see that $\psi\in W^{1,1}(U_N\cap K;\Bbb C)= W^{1,1}(U\cap K;\Bbb C)$} for any compact set $K\subset U$, and $\nabla\psi=\mathbf g+i\fb\psi$ becomes a locally integrable function on $U$, so $(\nabla-i\fb)\psi=\mathbf g$ in { $\mathcal D'(U;\Bbb C^2)$} too.
\end{rem}

Fortunately, our hypotheses on $\fb$ will allow us to view $\fb\psi$ as a distribution on $U$ whenever { $\psi\in H^1_{\fb}(U;\Bbb C)$,} thereby  overcoming the technical difficulties in Definition~\ref{def:H1-mag}. This is due to the following lemma.

\begin{lem}\label{lem:H1-mag}
Under the hypotheses in Sec.~\ref{sec:hyp},  $H^1_{\fb}(U;\Bbb C)\subset L^p(U;\Bbb C)$, for all $p\in[2,+\infty)$.
Furthermore, for all $\psi\in H^1_{\fb}(U;\Bbb C)$,  we have
$$\fb\psi\in L^1(U,\Bbb C^2),\quad
|\psi|\in H^1(U)\quad{\rm  and}\quad \|\nabla|\psi|\|_{L^2(U)}\leq \|(\nabla-i\fb)\psi\|_{L^2(U)}\,.$$
\end{lem}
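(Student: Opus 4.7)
\emph{Plan.} The idea is to reduce to the classical magnetic Sobolev space on $U_N=U\setminus I$, where $\fb\in L^2_{\rm loc}$ and the diamagnetic inequality is available, then remove the finite singular set $I$ using that a point has zero $H^1$-capacity in $\R^2$, and finally conclude by Sobolev embedding and H\"older.

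\emph{Diamagnetic inequality on $U_N$.} Restricting the test function in \eqref{eq:cond-mag-der} to $\varphi\in C_c^\infty(V;\Bbb C)$ for $V\Subset U_N$, and noting that $\fb\psi\in L^1_{\rm loc}(U_N;\Bbb C^2)$ is a bona fide distribution on $U_N$, the relation reads $\mathbf g=(\nabla-i\fb)\psi$ in $\mathcal D'(U_N;\Bbb C^2)$. By Remark~\ref{rem:mag-H1} this places $\psi$ in the classical magnetic Sobolev space $H^1_{\fb}(U_N;\Bbb C)$. The Kato/diamagnetic inequality then yields $|\psi|\in H^1_{\rm loc}(U_N)$ with $|\nabla|\psi||\leq|\mathbf g|$ a.e.\ on $U_N$; combined with $|\psi|\in L^2(U)=L^2(U_N)$ and $\mathbf g\in L^2(U)$, this upgrades to $|\psi|\in H^1(U_N)$ with
\[
\|\nabla|\psi|\|_{L^2(U_N)}\leq\|(\nabla-i\fb)\psi\|_{L^2(U)}.
\]

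\emph{Removing the singular set $I$.} To transfer this to all of $U$, I employ a logarithmic cutoff: for large integer $k$ choose $\chi_k\in C^\infty(U)$ with $0\leq\chi_k\leq 1$, $\chi_k=0$ on $\bigcup_i D(x_i,1/k)$, $\chi_k=1$ outside $\bigcup_i D(x_i,\varepsilon_0)$, and $\|\nabla\chi_k\|_{L^2(U)}^2\leq C/\log k\to 0$ as $k\to\infty$ (a direct polar-coordinate calculation on each annulus provides this bound, which is specific to $n=2$ where a single point has vanishing $H^1$-capacity). The function $\chi_k|\psi|$, extended by zero across $I$, lies in $H^1(U)$, so for any $\varphi\in C_c^\infty(U;\Bbb C)$ integration by parts yields
\[
\int_U\chi_k|\psi|\,\p_j\varphi\,dx=-\int_U\chi_k(\p_j|\psi|)\varphi\,dx-\int_U(\p_j\chi_k)|\psi|\varphi\,dx,\q j=1,2.
\]
As $k\to\infty$, dominated convergence handles the two integrals not involving $\nabla\chi_k$, while the last is bounded by $\|\nabla\chi_k\|_{L^2}\,\||\psi|\|_{L^2}\,\|\varphi\|_{L^\infty}\to 0$. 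Passing to the limit identifies $|\psi|\in H^1(U)$ with weak gradient equal to the $L^2$ function from the previous paragraph; in particular the stated inequality $\|\nabla|\psi|\|_{L^2(U)}\leq\|(\nabla-i\fb)\psi\|_{L^2(U)}$ holds on all of $U$.

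\emph{$L^p$-embedding, $L^1$-bound on $\fb\psi$, and main obstacle.} Since $U\subset\R^2$ is a bounded $C^2$ domain, the Sobolev embedding $H^1(U)\hookrightarrow L^p(U)$ for every $p\in[2,+\infty)$ applied to $|\psi|$ gives $\psi\in L^p(U;\Bbb C)$ for all such $p$, proving the first assertion. For the integrability of $\fb\psi$, use $\fb\in L^q(U;\Bbb R^2)$ with $q\in(1,2)$ and $\psi\in L^{q/(q-1)}(U;\Bbb C)$ with conjugate exponent $q/(q-1)>2$: H\"older's inequality delivers $\fb\psi\in L^1(U;\Bbb C^2)$. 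The delicate step in the whole argument is the removability above: the cutoffs have to be chosen with a logarithmic profile so that $\|\nabla\chi_k\|_{L^2(U)}\to 0$, a manifestation of the fact that $n=2$ is the critical dimension for the $H^1$-capacity of a point.
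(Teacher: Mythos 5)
Your proof is correct, and it follows the paper's skeleton---establish the diamagnetic inequality on the punctured domain $U_N$, remove the finite singular set, then conclude via the Sobolev embedding $H^1(U)\hookrightarrow L^p(U)$ and H\"older---but it diverges at the one step that carries the real content, namely the removability of $I$. The paper disposes of this step by citing the identification $W^{1,q}(U)=W^{1,q}(U_N)$ coming from the ACL characterization of Sobolev functions (Remark~\ref{rem:mag-H1}, quoting \cite[Thm.~6.1.3]{Hetal}), so that $|\psi|\in H^1(U\setminus I)$ \emph{is} $|\psi|\in H^1(U)$ with no further work; you instead prove the removability by hand with logarithmic cutoffs $\chi_k$ satisfying $\|\nabla\chi_k\|_{L^2(U)}^2\leq C/\log k$, the classical zero-$H^1$-capacity-of-points argument in dimension two, and your execution (integration by parts against $\chi_k|\psi|\in H^1(U)$, dominated convergence for the two regular terms, Cauchy--Schwarz killing the $\nabla\chi_k$ term) is sound. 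Your route is self-contained and makes explicit why $n=2$ is exactly critical for this removability, at the cost of length; the paper's route is shorter but leans on an external theorem. Two further small points in your favor: you spell out the H\"older pairing $\fb\in L^q(U;\Bbb R^2)$, $\psi\in L^{q/(q-1)}(U;\Bbb C)$ giving $\fb\psi\in L^1(U;\Bbb C^2)$, which the paper leaves implicit, and your direct pointwise bound $\big|\nabla|\psi|\big|\leq|\mathbf g|$ a.e.\ on $U_N$ replaces the paper's exhaustion over $U\setminus I_\varepsilon$ plus monotone convergence, with the same effect. One cosmetic slip: the appeal to Remark~\ref{rem:mag-H1} to place $\psi$ in $H^1_{\fb}(U_N;\Bbb C)$ is unnecessary (that remark concerns the case $\fb\in L^2_{\rm loc}(U;\Bbb R^2)$); condition \eqref{eq:cond-mag-der} already says $(\nabla-i\fb)\psi=\mathbf g$ in $\mathcal D'(U_N;\Bbb C^2)$, since $\fb\in L^2_{\rm loc}(U_N;\Bbb R^2)$ makes $\fb\psi$ a bona fide distribution there.
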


\begin{rem}\label{rem:H1-mag}
In light of Lemma~\ref{lem:H1-mag}, we see that { $\fb\psi\in\mathcal D'(U;\Bbb C^2)$,} and now, we can interpret the condition { $(\nabla-i\fb)\psi\in L^2(U;\Bbb C^2)$}  as follows
$$\exists\,g\in L^2(U),\quad (\nabla-i\fb)\psi=g~{\rm in~}\mathcal D'(U)\,.$$
We then can define the magnetic Sobolev space $H^1_{\fb}(U)$ as follows
\begin{equation}\label{eq:H1-mag-sing}
{
H^1_{\fb}(U;\Bbb C)=\{\psi\in L^2(U;\Bbb C)}~:~\eqref{eq:cond-mag-der*}~{\rm holds}\}\,.
\end{equation}
\end{rem}
%

\begin{proof}[Proof of Lemma~\ref{lem:H1-mag}]~

Let { $\psi\in H^1_{\fb}(U;\Bbb C)$} and consider the distributional derivative, $\mathbf u:=\nabla|\psi|$, in $\mathcal D'(U_N)$. We first  check  that $\mathbf u$ is a measurable vector function. Indeed, we will prove that, in $\mathcal D'(U_N)$,
\begin{equation}\label{eq:nabla|psi|}
{ \mathbf u= \mathbf 1_{\{\psi(x)\not=0\}}\Re \frac{\overline{\psi(x)}}{|\psi(x)|}\,\nabla\psi(x)\,.}
\end{equation}
Pick an arbitrary test function $\varphi\in C_c^\infty(U_N)$.
Assume that $I=\{x_1,\cdots,x_N\}$, $\varepsilon_0>0$ and  ${\rm supp}\,\varphi\subset U\setminus I_{\varepsilon_0}$ (see Sec.~\ref{sec:hyp}). Since { $\fb\in L^2_{\rm loc}(U\setminus I;\Bbb R^2)$,} we know that  { $\psi\in H^1(U\setminus I_{\varepsilon_0};\Bbb C)$} and \eqref{eq:nabla|psi|} holds in $\mathcal D'(U\setminus I_{\varepsilon_0})$, hence
$$
{
\int_{U} |\psi(x)|\,\nabla\varphi (x)\,dx
=-\Re \int_U  \mathbf 1_{\{\psi(x)\not=0\}}\frac{\overline{\psi(x)}}{|\psi(x)|}\,\nabla\psi(x)\,\varphi(x)\,dx\,.
}
$$
Next, we check that the function $\nabla|\psi|$ is in $L^2(U)$. Let $\varepsilon\in(0,\varepsilon_0)$.
By the diamagnetic inequality \cite[Thm.~7.21, pp. 193]{LL}, for  almost every $x\in  U\setminus I_\varepsilon$,
$|\,\nabla|\psi|\,|\leq |(\nabla-i\fb)\psi|$. Consequently, $$\int_{ U_\varepsilon}
|\,\nabla|\psi|\,|^2\,dx\leq \|(\nabla-i\fb)\psi\|_{L^2(U)}^2\,,$$
and by monotone convergence,
$$\|\nabla|\psi|\|_{L^2(U\setminus I)}^2=\lim_{\varepsilon\to0_+}\int_{ U_\varepsilon}
|\,\nabla|\psi|\,|^2\,dx\leq \|(\nabla-i\fb)\psi\|^2_{L^2(U)}<+\infty\,.$$
This proves that $|\psi|\in H^1(U\setminus I)$ (which coincides with the space  $H^1(U)$). Recalling the Sobolev embedding, $H^1(U) \hookrightarrow L^p(U)$ for $p\in[2,+\infty)$, we finish the proof of Lemma~\ref{lem:H1-mag}.
\end{proof}

A useful variant of Lemma~\ref{lem:H1-mag} is given below.

\begin{lem}\label{prop:vs}
For all $(\psi,\ab)\in { H^1_{\fb}(U;\Bbb C)}\times \big(H^1(U;\R^2)+\fb\big)$, it  holds,
\begin{enumerate}
\item $(\nabla-i\ab)\psi\in { L^2(U;\Bbb C^2)}$\,;
\item $\|\nabla|\psi|\|_{L^2(U)}\leq \|(\nabla-i\ab)\psi\|_{L^2(U)}$.
\end{enumerate}
\end{lem}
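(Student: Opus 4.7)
The plan is to reduce the claim to the case of a locally bounded magnetic potential by writing $\ab=\mathbf b+\fb$ with $\mathbf b\in H^1(U;\Bbb R^2)$, treating the $\mathbf b$-part as a mild perturbation and invoking the singular theory already developed for $\fb$. First I would establish integrability of the perturbation term: in two dimensions the Sobolev embedding gives $H^1(U;\Bbb R^2)\hookrightarrow L^p(U;\Bbb R^2)$ for every finite $p$, so $\mathbf b\in L^4(U;\Bbb R^2)$; combined with $\psi\in L^4(U;\Bbb C)$ supplied by Lemma~\ref{lem:H1-mag}, the Cauchy--Schwarz inequality yields $\mathbf b\psi\in L^2(U;\Bbb C^2)$.

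Next I would identify $(\nabla-i\ab)\psi$ in the distributional sense of Definition~\ref{def:H1-mag}. Starting from \eqref{eq:cond-mag-der} applied to $\psi\in H^1_{\fb}(U;\Bbb C)$, with realising vector field $\mathbf g\in L^2(U;\Bbb C^2)$, and noting that for any test function $\varphi\in C_c^\infty(U_N;\Bbb C)$ the algebraic identity $(\partial_j+i\ab_j)\varphi=(\partial_j+i\fb_j)\varphi+i\mathbf b_j\varphi$ holds pointwise, a direct integration over $U$ produces
$$
\int_U \psi\,(\partial_j+i\ab_j)\varphi\,dx=-\int_U\bigl(g_j-i\mathbf b_j\psi\bigr)\varphi\,dx,\q j=1,2.
$$
Since $\mathbf g-i\mathbf b\psi\in L^2(U;\Bbb C^2)$ by Step~1, this realises $(\nabla-i\ab)\psi$ in the sense required by Definition~\ref{def:H1-mag}, establishing (1).

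For (2) I would argue by truncation. For each $\varepsilon\in(0,\varepsilon_0)$ we have $\ab\in L^2_{\mathrm{loc}}(U\setminus I_\varepsilon;\Bbb R^2)$, so on this set $\psi$ belongs to the ordinary (non-singular) magnetic Sobolev space, and the classical pointwise diamagnetic inequality \cite[Thm.~7.21]{LL} gives $|\nabla|\psi||\leq |(\nabla-i\ab)\psi|$ almost everywhere on $U\setminus I_\varepsilon$. Integrating this inequality and letting $\varepsilon\to0_+$ via monotone convergence, while using that $|\psi|\in H^1(U)$ by Lemma~\ref{lem:H1-mag} so that the truncated $L^2$-norms of $\nabla|\psi|$ exhaust $\|\nabla|\psi|\|_{L^2(U)}^2$, yields the inequality in (2).

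The main technical hurdle is the distributional bookkeeping of Step~2: one must be careful that the identity $(\nabla-i\ab)\psi=(\nabla-i\fb)\psi-i\mathbf b\psi$ is tested only against $\varphi\in C_c^\infty(U_N;\Bbb C)$, since $\fb\psi$ is a priori only a distribution on $U_N$ even though the perturbed combination makes sense on all of $U$. Once this is in place, assertion (2) is merely the classical diamagnetic inequality on a non-singular region followed by the same exhaustion argument already used in the proof of Lemma~\ref{lem:H1-mag}.
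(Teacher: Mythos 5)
Your proof is correct and takes essentially the same route as the paper: the same decomposition $\ab=\fb+\mathbf b$ with $\mathbf b\psi\in L^2(U;\Bbb C^2)$ via the Sobolev embedding, H\"older's inequality and the $L^4$-bound from Lemma~\ref{lem:H1-mag} gives (1), while your truncation/diamagnetic/monotone-convergence argument for (2) is precisely the content of Lemma~\ref{lem:H1-mag}, which the paper simply invokes with $\ab$ in place of $\fb$ (legitimate since $\mathbf b\in L^2(U;\Bbb R^2)$ so $\ab$ still satisfies the hypotheses of Sec.~\ref{sec:hyp}). Your explicit distributional bookkeeping in Step~2, establishing $(\nabla-i\ab)\psi=(\nabla-i\fb)\psi-i\mathbf b\psi$ against test functions in $C_c^\infty(U_N;\Bbb C)$, is a sound elaboration of a step the paper leaves implicit in its one-line triangle-inequality estimate.
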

\begin{proof}
Let $\af=\ab-\fb$. We know that $\af\in H^1(U;\R^2)\hookrightarrow L^4(U;\R^2)$. Consequently, { $\af\, \psi\in L^2(U;\Bbb C)$,} by H\"older's inequality and  Lemma~\ref{lem:H1-mag}.\Bk Thus
$$\|(\nabla-i\ab) \psi\|_{L^2(U)}\leq \|(\nabla-i\fb)\psi\|_{L^2(U)}+\|\af\psi\|_{L^2(U)}<+\infty\,.$$
This proves (1). Noting that $\af\in { L^2(U;\Bbb R^2)}$, we see that (2) follows from Lemma~\ref{lem:H1-mag}.
\end{proof}

\subsection{Compactness in the magnetic Sobolev space}

In the next sections, we will work with minimizing sequences of the functional with Aharonov-Bohm potential. We describe here the procedure of extracting convergent sub-sequences.

We continue to work with hypotheses in Sec.~\ref{sec:hyp}  and under  {\bf the additional assumption}  { $\fb\in L^\infty_{\rm loc}(U\setminus I;\Bbb R^2)$.} Recall the space ${ H^1_{n0}(U,\div0)}$, of divergence free vector fields, introduced earlier in \eqref{eq:H1-div}.

\begin{prop}\label{prop:min-seq}
Let $M>0$. Assume that  $(\psi_n,\af_n)_{n\geq 1}\subset H^1_{\fb}(U;\C)\times { H^1_{n0}(U,\div0)}$ such that:
$$\forall\,n\geq 1\,,~\|(\nabla-i(\af_n+\fb))\psi\|_{L^2(U)}+\|\psi\|_{L^4(U)}+\|\curl\af_n\|_{L^2(U)}\leq M\,.$$
The following { holds}
\begin{enumerate}
\item The sequences $(|\psi_n|)_{n\geq 1}$ and $(\af_n)_{n\geq 1}$ are bounded in $H^1(U)$ { and in $H^1(U,\Bbb R^2)$ respectively}\,;
\item The sequence $((\nabla-i\fb)\psi_n)_{n\geq 1}$ is bounded in { $L^2(U;\Bbb C^2)$\,;}
\item For all $\varepsilon\in(0,\varepsilon_0)$,  the sequence $(\psi_n)_{n\geq 1}$  is bounded in { $H^1(U\setminus I_\varepsilon;\Bbb C)$\,;}
\item There exist $(\psi,\af)\in H^1_{\fb}(U;\C)\times H^1_{\rm div} (U;\R^2)$ and a subsequence $(\psi_{n_k},\af_{n_k})_{k\geq 1}$ such that
$$\begin{aligned}
&\liminf_{k\to+\infty}\|(\nabla-i(\af_{n_k}+\fb))\psi_{n_k}\|_{L^2(U)}\geq \|(\nabla-i(\af+\fb))\psi\|_{L^2(U)}\\
&\lim_{k\to+\infty}\|\psi_{n_k}\|_{L^p(U)}=\|\psi\|_{L^p(U)}\quad(p\in\{2,4\})\\
&\liminf_{k\to+\infty}\|\curl\af_{n_k}\|_{L^2(U)}\geq \|\curl\af\|_{L^2(U)}\,.
\end{aligned}$$
\end{enumerate}
\end{prop}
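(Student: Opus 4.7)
The strategy is to obtain each of the four items in turn, combining the diamagnetic inequality from Lemma~\ref{prop:vs}, the curl-div inequality on $H^1_{n0}(U,\div 0)$, standard Rellich-Kondrachov compactness, and a diagonal extraction that localizes away from the singular set $I=\{x_1,\dots,x_N\}$.

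\textbf{Item (1).} For $|\psi_n|$, Lemma~\ref{prop:vs} gives $\|\nabla|\psi_n|\|_{L^2(U)}\leq \|(\nabla-i(\af_n+\fb))\psi_n\|_{L^2(U)}\leq M$, while $\|\psi_n\|_{L^2(U)}\leq C\|\psi_n\|_{L^4(U)}\leq CM$ by H\"older since $U$ is bounded. For $\af_n$, the curl-div inequality cited after \eqref{eq:m*} yields $\|\af_n\|_{H^1(U)}\leq C\|\curl \af_n\|_{L^2(U)}\leq CM$ on $H^1_{n0}(U,\div 0)$.

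\textbf{Items (2)--(3).} The Sobolev embedding $H^1(U;\R^2)\hookrightarrow L^4(U;\R^2)$ combined with H\"older's inequality gives $\|\af_n\psi_n\|_{L^2(U)}\leq \|\af_n\|_{L^4(U)}\|\psi_n\|_{L^4(U)}\leq CM^2$, so the triangle inequality produces a uniform bound on $(\nabla-i\fb)\psi_n$ in $L^2(U;\C^2)$, giving (2). For (3), the additional assumption $\fb\in L^\infty_{\rm loc}(U\setminus I;\R^2)$ gives $\|\fb\|_{L^\infty(U\setminus I_\varepsilon)}<\infty$, and writing $\nabla\psi_n=(\nabla-i\fb)\psi_n+i\fb\psi_n$ on $U\setminus I_\varepsilon$ yields a uniform $H^1$ bound there.

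\textbf{Item (4).} By (1) I extract a subsequence with $\af_{n_k}\rightharpoonup \af$ weakly in $H^1(U;\R^2)$ and $|\psi_{n_k}|\rightharpoonup \eta$ weakly in $H^1(U)$; by Rellich's theorem these converge strongly in $L^p(U)$ for all $p\in[1,\infty)$. Using (3) and a diagonal extraction over $\varepsilon=1/m$, I may further assume $\psi_{n_k}\rightharpoonup \psi$ weakly in $H^1(U\setminus I_{1/m};\C)$ for every $m$, hence strongly in $L^p_{\rm loc}(U\setminus I;\C)$ and a.e.\ on $U\setminus I$. The a.e.\ convergence forces $\eta=|\psi|$, which upgrades the $L^p$ convergence of $|\psi_{n_k}|$ to the stated norm convergence of $\|\psi_{n_k}\|_{L^p(U)}\to \|\psi\|_{L^p(U)}$ for $p\in\{2,4\}$. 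The lower semicontinuity of the curl term follows from weak $L^2$ convergence of $\curl\af_{n_k}$.

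\textbf{Magnetic-derivative lower semicontinuity.} On each $U\setminus I_\varepsilon$, the strong $L^4$ convergence of $\af_{n_k}$ and the strong $L^2$ convergence of $\psi_{n_k}$ (together with $\fb\in L^\infty$ there) imply $\af_{n_k}\psi_{n_k}\to \af\psi$ and $\fb\psi_{n_k}\to \fb\psi$ in $L^2(U\setminus I_\varepsilon;\C^2)$, while $\nabla\psi_{n_k}\rightharpoonup \nabla\psi$ weakly in $L^2(U\setminus I_\varepsilon;\C^2)$. Hence $(\nabla-i(\af_{n_k}+\fb))\psi_{n_k}\rightharpoonup (\nabla-i(\af+\fb))\psi$ weakly in $L^2(U\setminus I_\varepsilon;\C^2)$, so
\[
\liminf_{k\to\infty}\|(\nabla-i(\af_{n_k}+\fb))\psi_{n_k}\|_{L^2(U)}\geq \|(\nabla-i(\af+\fb))\psi\|_{L^2(U\setminus I_\varepsilon)}.
\]
Letting $\varepsilon\to 0_+$ and using monotone convergence gives the desired inequality on all of $U$, and simultaneously shows $(\nabla-i(\af+\fb))\psi\in L^2(U;\C^2)$; subtracting $i\af\psi\in L^2(U;\C^2)$ (by the same H\"older argument as in step (2)) proves $\psi\in H^1_{\fb}(U;\C)$ in the sense of Definition~\ref{def:H1-mag}.

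\textbf{Main obstacle.} The delicate point is the passage through the singular set $I$: I cannot test with $\varphi\in C_c^\infty(U)$ against $\fb\psi$ directly, so the limit identification has to be done on each $U\setminus I_\varepsilon$ and then extended by monotone convergence. The diagonal extraction combined with the diamagnetic bound on $|\psi_n|$ is what lets me control the limit globally on $U$ even though the magnetic derivative is only locally defined on $U\setminus I$.
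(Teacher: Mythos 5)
Your proof is correct, and its skeleton matches the paper's: items (1)--(3) are proved identically, and the magnetic-derivative lower semicontinuity via weak convergence on each $U\setminus I_\varepsilon$ followed by monotone convergence as $\varepsilon\to0_+$ is exactly the paper's final step. You genuinely diverge on two sub-steps. First, for the norm convergence $\|\psi_{n_k}\|_{L^p(U)}\to\|\psi\|_{L^p(U)}$, the paper proves the strictly stronger fact that $\psi_{n_k}\to\psi$ in $L^p(U;\C)$ (its Step~4), by splitting the integral over $I_\varepsilon$ and $U\setminus I_\varepsilon$ and combining the uniform $L^{2p}$ bounds with $|I_\varepsilon|^{1/2}\to0$; you instead use only the strong $L^p(U)$ convergence of the moduli $|\psi_{n_k}|\to\eta$ together with the a.e.\ identification $\eta=|\psi|$. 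That suffices for the statement but delivers less: the paper reuses its Step~4 argument verbatim later (in the proof of Proposition~\ref{prop:min-MS*}). Second, for the membership $\psi\in H^1_{\fb}(U;\C)$, the paper (Step~5) identifies the weak $L^2(U;\C^2)$ limit $w$ of $(\nabla-i\fb)\psi_{n_k}$ with $(\nabla-i\fb)\psi$ as distributions on all of $U$, which requires first observing $\fb\psi\in\mathcal D'(U;\C^2)$ (via $\fb\in L^q$, $\psi\in L^{q'}$) and the strong global $L^p$ convergence from Step~4; your route stays entirely on $U_N$, extracts $(\nabla-i(\af+\fb))\psi\in L^2(U;\C^2)$ from the liminf bound plus monotone convergence, and subtracts $i\af\psi\in L^2(U;\C^2)$. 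This is more economical and sits exactly at the level of Definition~\ref{def:H1-mag}, which only tests against $\varphi\in C_c^\infty(U_N;\C)$, so no global distributional interpretation of $\fb\psi$ is ever needed.

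Two small repairs are in order. In the product convergence $\af_{n_k}\psi_{n_k}\to\af\psi$ in $L^2(U\setminus I_\varepsilon;\C^2)$ you should invoke strong $L^4$ (not merely $L^2$) convergence of $\psi_{n_k}$ there, since H\"older gives $L^4\cdot L^2\subset L^{4/3}$ only; this is a phrasing fix, not a gap, because your item~(3) plus Rellich already yields $\psi_{n_k}\to\psi$ in $L^p(U\setminus I_\varepsilon;\C)$ for all $p\in[2,\infty)$. You should also record that the weak $H^1$ limit $\af$ remains in $H^1_{n0}(U,\div0)$, as the statement asserts: the divergence-free condition passes to distributional limits, and the vanishing normal trace is preserved under weak $H^1(U;\R^2)$ convergence by compactness of the trace map into $L^2(\partial U)$.
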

\begin{proof}~

\begin{center}{\bf Step~1. \it Proof of (1)-(3).}\end{center}

By Lemma~\ref{prop:vs}, the sequence $(|\psi_n|)_{n\geq 1}$ is bounded in $H^1(U)$. By the curl-div inequality \cite[Prop.~D.2.1]{FH-b}, there exists $C>0$ such that,
$$
\forall\,\mathbf u\in { H^1_{n0}(U,\div0)}\,,\quad \|\mathbf u\|_{H^1(U)}\leq C\|\curl\mathbf u\|_{L^2(U)}\,.$$
This proves (1).  By the Sobolev embedding { $H^1(U;\Bbb R^2)\hookrightarrow L^p(U;\Bbb R^2)$}, we get that $(\af_n)_{n\geq 1}$ is bounded in { $L^p(U;\Bbb R^2)$}, for all $p\in[2,\infty)$. By H\"older's inequality,
$$
\|\af_n\psi_n\|_{L^2(U)}\leq \|\af_n\|_{L^4(U)}\|\psi_n\|_{L^4(U)}\,,
$$
and we get that $(\af_n\psi_n)_{n\geq 1}$ is bounded in { $L^2(U;\Bbb C^2)$.} By the Minkowski inequality,
$$
\|(\nabla-i\fb)\psi_n\|_{L^2(U)}\leq \|(\nabla-i(\af_n+\fb))\psi_n\|_{L^2(U)}+\|\af_n\psi_n\|_{L^2(U)}\,,
$$
which proves (2). Since $U$ is bounded, { $L^4(U;\Bbb C)\hookrightarrow L^2(U;\Bbb C)$}, hence $(\psi_n)_{n\geq 1}$ is bounded in { $L^2(U;\Bbb C)$}. Furthermore,  { $\fb\in L^\infty(U\setminus I_\varepsilon;\Bbb R^2)$} and
$$
\|\nabla\psi_n\|_{L^2(U\setminus I_\varepsilon)}\leq
\|(\nabla-i\fb)\psi_n\|_{L^2(U)}+\|\fb\psi_n\|_{L^2(U\setminus I_\varepsilon)},
$$
which proves (3).\medskip

\begin{center}{\bf Step~2. \it  Extraction of the subsequence}\end{center}

By a diagonal sequence argument, the Banach-Alaoglu theorem and the compactness of the embedding $H^1(U)\hookrightarrow L^p(U)$,  $p\in[2,\infty)$, we can extract a subsequence $(\psi_{n_k},\af_{n_k})_{k\geq 1}$, functions $\psi\in H^1_{\rm loc}(U\setminus I;\C)$, { $\zeta\in H^1(U)$,  $\af\in H^1_{n0}(U,\div0)$ and $w\in L^2(U;\Bbb C^2)$}   such that
\begin{align*}
&\psi_{n_k}\rightharpoonup \psi~{\rm in~}H^1_{\rm loc}(U\setminus I;\C)\\
&\psi_{n_k}\to \psi~{\rm in~}L^p_{\rm loc}(U\setminus I;\C)\quad(p\in[2,\infty))\\
&|\psi_{n_k}|\to {\zeta}~{\rm in~}L^p(U)\quad(p\in[2,\infty))\\
&\af_{n_k}\rightharpoonup\af~{\rm in~} { H^1_{n0}(U,\div0)}\\
&\af_{n_k}\to \af~{\rm in~}L^p (U  ;\R^2)\quad(p\in[2,\infty))\\
&(\nabla-i\fb)\psi_{n_k}\rightharpoonup w~{\rm in~} { L^2(U;\C^2)}\,.
\end{align*}
\medskip
\begin{center}{\bf Step~3. \it $\psi\in { L^p(U;\Bbb C)}$.}\end{center}

For all $\varepsilon\in(0,\varepsilon_0)$ and $p\in[2,\infty)$,
$ |\psi_{n_k}|\to |\psi|$ in $L^p(U\setminus I_\varepsilon)$, hence $|\psi|={\zeta}$ in $L^p(U\setminus I_\varepsilon)$. By monotone convergence,
$$
0=\lim_{\varepsilon\to0_+}\int_{U\setminus I_\varepsilon}
 \big|\,|\psi|-{\zeta}\,\big|^p\,dx=\int_U   \big|\,|\psi|-{\zeta}\,\big|^p\,dx\,,
$$
hence $|\psi|={\zeta}$ a.e. in $U$. Since ${\zeta}\in H^1(U)\hookrightarrow L^p(U)$ with $p\in[2,\infty)$, we deduce that { $\psi\in L^p(U;\Bbb C)$} and consequently
$$
\lim_{k\to+\infty}\int_U|\psi_{n_k}|^p\,dx=\int_U|\psi|^p\,dx\quad(p\in[2,\infty))\,.
$$
\medskip

\begin{center}{\bf Step~4. \it Convergence in { $L^p(U;\Bbb C)$}.} \end{center}

We  prove that $\psi_{n_k}\to\psi$ in { $L^p(U)$} as follows. Fix $\varepsilon\in(0,\varepsilon_0)$. By the H\"older and Minkowski  inequalities,
$$
\int_{I_\varepsilon}|\psi_{n_k}-\psi|^p\,dx\leq |I_\varepsilon|^{1/2}\left(\int_U |\psi_{n_k}-\psi|^{2p}\,dx\right)^{1/2}\leq \tilde M |I_\varepsilon|^{1/2},
$$
where $\tilde M=\Big(\sup\limits_{k\geq 1}\|\psi_{n_k}\|_{L^{2p}(U)}+\|\psi\|_{L^{2p}(U)} \Big)^{p}<+\infty$; moreover, $\psi_{n_k}\to\psi$ in { $L^p(U\setminus I_\varepsilon;\Bbb C)$}. With this in hand, we deduce that
$$
0\leq \limsup_{k\to+\infty}\int_U|\psi_{n_k}-\psi|^p\,dx=\limsup_{k\to+\infty}\left(\int_{I_\varepsilon}|\psi_{n_k}-\psi|^p\,dx+\int_{U\setminus I_\varepsilon}|\psi_{n_k}-\psi|^p\,dx\right) \leq \tilde M|I_\varepsilon|^{1/2}\,.
$$
Sending $\varepsilon$ to $0$, we get the desired convergence, $\lim\limits_{k\to+\infty}\int_U|\psi_{n_k}-\psi|^p\,dx=0$.\medskip

\begin{center}{\bf Step~5. \it $\psi\in { H^1_{\fb}(U;\Bbb C)}$.}\end{center}

Since $\fb\in { L^q(U;\Bbb R^2)}$ and $\psi\in { L^p(U;\Bbb C)}$ for all $q\in[1,2)$ and $p\in[2,+\infty)$, we get that $\psi$ and $\fb\psi$ are distributions { on $U$}. Hence $(\nabla-i\fb)\psi\in { \mathcal D'(U;\Bbb C^2)}$.

By Step~4 above, we get that
$(\nabla-i\fb)\psi_{n_k}\to(\nabla-i\fb)\psi$ in ${ \mathcal D'(U;\Bbb C^2)}$. In light of Step~2 above, the weak convergence of $(\nabla-i\fb)\psi_{n_k}$ to $w$ in { $L^2(U;\Bbb C^2)$}  yields the convergence in { $\mathcal D'(U;\Bbb C^2)$}, hence the identity
$(\nabla-i\fb)\psi =w$ in { $\mathcal D'(U;\Bbb C^2)$}.
This proves that $(\nabla-i\fb)\psi\in { L^2(U;\Bbb C^2)}$, and since $\psi\in { L^2(U;\Bbb C)}$, we eventually get that $\psi\in { H^1_{\fb}(U;\Bbb C)}$. \medskip

\begin{center}{\bf Step~6. \it End of the proof of (4).}\end{center}

For all $\varepsilon\in(0,\varepsilon_0)$,
$$\int_{U} |(\nabla-i(\af_{n_k}+\fb))\psi_{n_k}|^2\,dx\geq \int_{U\setminus I_\varepsilon}|(\nabla-i(\af_{n_k}+\fb))\psi_{n_k}|^2\,,$$
and $(\nabla-i(\af_{n_k}+\fb))\psi_{n_k}\rightharpoonup (\nabla-i(\af+\fb))\psi$ in { $L^2(U\setminus I_\varepsilon;\Bbb C^2)$} by Step~2 above; this yields
$$\liminf_{k\to+\infty}\int_{U} |(\nabla-i(\af_{n_k}+\fb))\psi_{n_k}|^2\,dx\geq \int_{U\setminus I_\varepsilon}|(\nabla-i(\af +\fb))\psi|^2\,dx\,.$$
By monotone convergence
$$\liminf_{k\to+\infty}\int_{U} |(\nabla-i(\af_{n_k}+\fb))\psi_{n_k}|^2\,dx\geq \lim_{\varepsilon\to0_+}\int_{U\setminus I_\varepsilon}|(\nabla-i(\af +\fb))\psi|^2\,dx=\int_{U}|(\nabla-i(\af +\fb))\psi|^2\,dx\,.$$
Finally, $\curl\af_{n_k}\rightharpoonup \curl\af$ in $L^2(U)$, which yields that
$$\liminf_{k\to+\infty}\int_U|\curl\af_{n_k}|^2\,dx\geq \int_U|\curl\af|^2\,dx\,.$$
\end{proof}

\section{Minimizers  with Aharonov-Bohm potential}\label{sec:minAB}

In this section, we study the existence of minimizers of the GL functional in \eqref{eq:GL-AB} along with some of their properties.

\subsection{Gauge invariance}

Using gauge invariance, we can restrict the minimization of the functional in \eqref{eq:GL-AB} to the space of divergence free vector fields;  the advantage being that such vector fields enjoy pleasant  regularity  properties.

\begin{prop}\label{prop:vsAB}
For all $h>0$,
$$\Ea(h)=\inf\{\Eab(\psi,\mathbf a+\Fb_{\rm AB})~:~(\psi,\mathbf a)\in H^1_{h\Fb_{\rm AB}}(\Omega;\C)\times { H^1_{n0}(\O,\div0)}\}\,,$$
 where
\begin{itemize}
\item $\Ea(h)$ is introduced in \eqref{eq:enAB}\,;
\item $\Eab$ is the functional introduced in \eqref{eq:GL-AB}\,;
\item ${ H^1_{n0}(\O,\div0)}$ is the space  introduced  in \eqref{eq:H1-div}.
\end{itemize}
\end{prop}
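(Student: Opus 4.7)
\textbf{Proof plan for Proposition~\ref{prop:vsAB}.}

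The inequality ``restricted infimum $\geq\Ea(h)$'' is immediate, since any admissible pair $(\psi,\mathbf a+\Fab)$ with $\mathbf a\in H^1_{n0}(\Omega,\div 0)$ already lies in $\mathcal H_{\rm AB}$ (because $H^1_{n0}(\Omega,\div 0)\subset H^1(\Omega;\R^2)$). The work is to prove the reverse inequality by showing that any configuration in $\mathcal H_{\rm AB}$ can be gauge-equivalently represented with a divergence-free part satisfying the zero normal trace condition.

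The main step is a Helmholtz-type decomposition. Given $(\psi,\Ab)\in\mathcal H_{\rm AB}$, write $\Ab=\mathbf b+\Fab$ with $\mathbf b\in H^1(\Omega;\R^2)$, and solve the Neumann problem
\begin{equation*}
\Delta\phi=\div\mathbf b\ \text{in}\ \Omega,\qquad \p_\nu\phi=\nu\cdot\mathbf b\ \text{on}\ \p\Omega.
\end{equation*}
The compatibility condition $\int_\Omega\div\mathbf b=\int_{\p\Omega}\nu\cdot\mathbf b$ holds by the divergence theorem; since $\p\Omega\in C^2$ and the data are in $L^2(\Omega)$ and $H^{1/2}(\p\Omega)$, elliptic regularity gives $\phi\in H^2(\Omega)$ (unique up to an additive constant). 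Setting $\mathbf a:=\mathbf b-\nabla\phi$, one has $\mathbf a\in H^1(\Omega;\R^2)$, $\div\mathbf a=0$ and $\nu\cdot\mathbf a=0$, so $\mathbf a\in H^1_{n0}(\Omega,\div 0)$.

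Next, perform the gauge transformation $\tilde\psi:=e^{ih\phi}\psi$ and $\tilde\Ab:=\Ab-\nabla\phi=\mathbf a+\Fab$. A direct Leibniz computation in $\mathcal D'(\Omega\setminus\{0\})$, where $\Fab$ is locally bounded, yields the standard covariance identity
\begin{equation*}
(\nabla-ih\tilde\Ab)\tilde\psi=e^{ih\phi}(\nabla-ih\Ab)\psi.
\end{equation*}
Combined with $|\tilde\psi|=|\psi|$ and $\curl(\tilde\Ab-\Fab)=\curl\mathbf a=\curl(\Ab-\Fab)$ (since $\curl\nabla\phi=0$), this gives $\Eab(\tilde\psi,\tilde\Ab)=\Eab(\psi,\Ab)$, so the reverse inequality will follow once $\tilde\psi$ is shown to lie in $H^1_{h\Fab}(\Omega;\C)$.

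That last membership is the main obstacle, since Definition~\ref{def:H1-mag} uses test functions supported away from the Aharonov--Bohm singularity. Decompose
\begin{equation*}
(\nabla-ih\Fab)\tilde\psi=(\nabla-ih\tilde\Ab)\tilde\psi+ih\mathbf a\,\tilde\psi=e^{ih\phi}(\nabla-ih\Ab)\psi+ih\mathbf a\,\tilde\psi.
\end{equation*}
The first term is $L^2$ by Lemma~\ref{prop:vs} applied to $(\psi,\Ab)$; the second is $L^2$ because $\mathbf a\in H^1(\Omega;\R^2)\hookrightarrow L^4$ and $\tilde\psi\in L^4$ by Lemma~\ref{lem:H1-mag}. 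The distributional identity \eqref{eq:cond-mag-der} transfers from $\psi$ to $\tilde\psi$ via the Leibniz rule against test functions in $C_c^\infty(\Omega\setminus\{0\};\C)$, for which the boundedness and $H^2$-regularity of $e^{ih\phi}$ on $\Omega$ are sufficient. This completes $\tilde\psi\in H^1_{h\Fab}(\Omega;\C)$ and thus the proposition.
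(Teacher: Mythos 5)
Your proposal is correct and follows essentially the same route as the paper: the paper also gauges away the non-divergence-free part via a function $\varphi\in H^2(\Omega)$, simply citing \cite[Prop.~D.1.1]{FH-b} (whose proof is exactly your Neumann problem) and declaring the membership $(\tilde\psi,\tilde{\mathbf a})\in H^1_{h\Fab}(\Omega;\C)\times H^1_{n0}(\Omega,\div0)$ to be clear. Your only additions are to prove the cited decomposition from scratch and to verify carefully the step the paper leaves implicit, namely that $\tilde\psi=e^{ih\phi}\psi$ stays in the singular magnetic Sobolev space, which you handle correctly using test functions supported in $\Omega\setminus\{0\}$ and the $L^4$ bounds from Lemmas~\ref{lem:H1-mag} and~\ref{prop:vs}.
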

\begin{proof}
Let $(\psi,\Ab:=\mathbf a+\Fb_{\rm AB})\in { H^1_{h\Fb_{\rm AB}}(\Omega;\Bbb C)}\times \big(H^1(\Omega;\R^2)+\Fb_{\rm AB}\big)$.  By \cite[Prop.~D.1.1]{FH-b}, there exists $\varphi\in { H^2(\Omega)}$ such that $\tilde{\mathbf a}:=\mathbf a-\nabla\varphi\in { H^1_{n0}(\O,\div0)}$. Setting $\tilde\psi=e^{ih\varphi}\psi$, it is clear that $(\tilde\psi,\tilde{\mathbf a})\in { H^1_{h\Fb_{\rm AB}}(\Omega;\Bbb C)\times H^1_{n0}(\O,\div0)}$ and
$\Eab(\psi,\mathbf a+\Fb_{\rm AB})=\Eab(\tilde\psi,\tilde{\mathbf a}+\Fb_{\rm AB})$.
\end{proof}

\subsection{Existence of minimizers}

Next we establish the existence of minimizing configurations.

\begin{prop}\label{prop:ex-min}
For all $h>0$, there exists $(\psi,\mathbf a)\in H^1_{h\Fb_{\rm AB}}(\Omega;\C)\times { H^1_{n0}(\O,\div0)}$ such that
$$\Eab(\psi,\mathbf a+\Fb_{\rm AB})=\Ea(h)\,.$$
\end{prop}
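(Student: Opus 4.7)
The plan is to apply the direct method of the calculus of variations, using the gauge reduction from Proposition~\ref{prop:vsAB} and the compactness Proposition~\ref{prop:min-seq} with the choice $\fb=h\Fb_{\rm AB}$, $I=\{0\}$, $N=1$, $U=\Omega$.

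First, I would take a minimizing sequence $(\psi_n,\ab_n)_{n\geq 1}\subset H^1_{h\Fb_{\rm AB}}(\Omega;\C)\times H^1_{n0}(\Omega,\div 0)$ such that
\[
\Eab(\psi_n,\ab_n+\Fab)\to \Ea(h),
\]
whose existence follows from Proposition~\ref{prop:vsAB}. Rewriting the functional as in~\eqref{eq:GL-AB*},
\[
\Eab(\psi_n,\ab_n+\Fab)+\tfrac{\kappa^2}{2}|\Omega|=\int_\Omega\left(|(\nabla-ih(\ab_n+\Fab))\psi_n|^2+\tfrac{\kappa^2}{2}(1-|\psi_n|^2)^2+h^2|\curl\ab_n|^2\right)dx,
\]
I would extract uniform bounds: since the left-hand side is bounded, each of the three nonnegative terms on the right is bounded, which yields, for some $M>0$ and all $n$,
\[
\|(\nabla-ih(\ab_n+\Fab))\psi_n\|_{L^2(\Omega)}+\|\psi_n\|_{L^4(\Omega)}+\|\curl\ab_n\|_{L^2(\Omega)}\leq M,
\]
where the $L^4$-bound comes from expanding $(1-|\psi_n|^2)^2$ and using boundedness of $\Omega$. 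Note that Proposition~\ref{prop:min-seq} is formulated for the scaled field $\fb=h\Fab$, and $h\Fab$ belongs to $L^q(\Omega;\R^2)$ for $q\in(1,2)$ and to $L^\infty_{\rm loc}(\Omega\setminus\{0\};\R^2)$, so its hypotheses are met.

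Next, I would apply Proposition~\ref{prop:min-seq} to extract a subsequence, still denoted $(\psi_n,\ab_n)$, and a limit $(\psi,\ab)\in H^1_{h\Fab}(\Omega;\C)\times H^1_{n0}(\Omega,\div 0)$ satisfying
\begin{align*}
\|(\nabla-ih(\ab+\Fab))\psi\|_{L^2(\Omega)}&\leq \liminf_{n\to\infty}\|(\nabla-ih(\ab_n+\Fab))\psi_n\|_{L^2(\Omega)},\\
\|\curl\ab\|_{L^2(\Omega)}&\leq \liminf_{n\to\infty}\|\curl\ab_n\|_{L^2(\Omega)},\\
\|\psi\|_{L^p(\Omega)}&=\lim_{n\to\infty}\|\psi_n\|_{L^p(\Omega)}\quad (p\in\{2,4\}).
\end{align*}
Passing to the liminf in each term of the decomposition above and using these three properties yields
\[
\Eab(\psi,\ab+\Fab)\leq \liminf_{n\to\infty}\Eab(\psi_n,\ab_n+\Fab)=\Ea(h),
\]
and since $(\psi,\ab+\Fab)$ is an admissible test configuration, the reverse inequality $\Eab(\psi,\ab+\Fab)\geq \Ea(h)$ is automatic, so equality holds.

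The main (and really only) obstacle is ensuring that the limiting magnetic potential $\ab+\Fab$ together with $\psi$ is a genuine admissible pair, i.e. that $\psi\in H^1_{h\Fab}(\Omega;\C)$ in the distributional sense of Definition~\ref{def:H1-mag}; this is precisely the content of Step~5 in the proof of Proposition~\ref{prop:min-seq}, which handles the singular set $\{0\}$. Once that is in hand, the rest is standard lower semi-continuity, and the proof is complete.
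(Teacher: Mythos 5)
Your proposal is correct and follows essentially the same route as the paper: a minimizing sequence in the gauged space from Proposition~\ref{prop:vsAB}, uniform bounds extracted from the rewriting \eqref{eq:GL-AB*}, and then Proposition~\ref{prop:min-seq} (applied, as you implicitly do, with the scaled pair $\af_n=h\ab_n$, $\fb=h\Fab$) to pass to the limit by lower semicontinuity. The only difference is that you spell out the hypothesis verification and the term-by-term limit in more detail than the paper does, which is harmless.
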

\begin{proof}
We use the standard method of the calculus of variations. We choose a minimizing sequence $(\psi_n,\mathbf a_n)_{n\geq 1}\subset H^1_{h\Fb_{\rm AB}}(\Omega;\C)\times { H^1_{n0}(\O,\div0)}$ such that
\begin{equation}\label{eq:min-seq}
\lim\limits_{n\to+\infty}\Eab(\psi_n,\mathbf a_n+\Fb_{\rm AB})=\Ea(h)\,.
\end{equation}
By \eqref{eq:GL-AB*}, there exists $M>0$ such that,
$$
\|(\nabla-ih(\ab_n+\Fb_{\rm AB}))\psi_n\|_{L^2(\Omega)}+\|\psi_n\|_{L^4(\Omega)}+ h\|\curl\mathbf a_n\|_{L^2(\Omega)}\leq M\,,\q\forall\,n\geq 1\,.
$$
We can apply Proposition~\ref{prop:min-seq} with $(\psi_n,\af_n=h\ab_n, \fb=h\Fab)$. We get a subsequence $(\psi_{n_k},\ab_{n_k})_{k\geq 1}$ and a configuration $(\psi,\ab)\in { H^1_{h\Fab}(\Omega;\Bbb C)\times H^1_{n0}(\Omega,\div0)}$ such that
$$\Ea(h)=\liminf_{k\to\infty} \Eab(\psi_{n_k},\ab_{n_k}+\Fab)\geq \Eab(\psi,\ab+\Fab)\geq \Ea(h)\,, $$
where the identity on the left hand side follows from \eqref{eq:min-seq}, and the last  inequality  on the right hand side follows from the definition of $\Ea(h)$.
\end{proof}

\begin{defn}\label{def:min-config}
Given $h>0$ and a  $(\psi,\ab)\in { H^1_{h\Fab}(\Omega;\Bbb C)\times  H^1_{n0}(\O,\div0)}$, we will use the following terminology:
\begin{itemize}
\item $(\psi,\ab)_{h}$ is said to be a minimizing configuration of $\Eab$ if $\Eab(\psi,\ab+\Fab)=\Ea(h)$, where $\Eab$ and $\Ea(h)$ are introduced in \eqref{eq:GL-AB} and \eqref{eq:enAB} respectively\,;
\item $(\psi,\ab)_{h}$ is said to be a critical configuration of $\Eab$ if  $\frac{d}{dt}\Eab(\psi+t\varphi,\ab+\Fab)\Big|_{t=0}=0$ and $\frac{d}{dt}\Eab(\psi,\ab+\Fab+t\mathfrak b)\Big|_{t=0}=0$, for all $(\varphi,\mathfrak b)\in { H^1_{h\Fab}(\Omega;\Bbb C)\times  H^1_{n0}(\O,\div0)}$.
\end{itemize}
\end{defn}

\begin{rem}\label{rem:min-config}
Obviously, every minimizing configuration is a critical configuration. Furthermore, every critical configuration $(\psi,\ab)_h$ satisfies
\begin{equation}\label{eq:GLeq}
\left\{\aligned
-&\big(\nabla-ih\Ab\big)^2\psi=\kp^2(1-|\psi|^2)\psi &{\rm in}\ \Omega\,,\\
-&\nabla^\bot  \big(\curl\ab\big)= \frac{1}{h}{\rm Im}\big(\overline{\psi}(\nabla-ih {\bf A})\psi\big) & {\rm in}\ \Omega\,,\\
&\nu\cdot(\nabla-ih {\bf A})\psi=0 & {\rm on}\ \partial \Om \,,\\
&{\rm curl}\,\ab=0 & {\rm on}\ \partial \Om \,,
\endaligned
\right.
\end{equation}
where $\Ab=\ab+\Fab$, $\nu$ is the outward unit normal vector on $\partial\Omega$, and the operator $\nabla^\bot=(-\partial_{x_2},\partial_{x_1})$ is the Hodge gradient.
\end{rem}

\subsection{A priori estimates}

\begin{prop}\label{prop:EL-eq}
There exists $C_0>0$ such that, given a critical configuration $(\psi,\ab)_{h}\in H^1_{h\Fab}(\Omega;\C)\times { H^1_{n0}(\O,\div0)}$ of $\Eab$, the following holds:
\begin{enumerate}
\item $\ab\in H^2(\Omega;\R^2)$\,;\medskip
\item $(\psi,\ab)\in { C^\infty (\overline{\Omega}\setminus\{0\};\Bbb C)\times C^\infty (\overline{\Omega}\setminus\{0\};\Bbb R^2)}$\,;\medskip
\item  $\|(\nabla-ih\Ab)\psi\|_{L^2(\Omega)}\leq \kappa\|\psi\|_{L^2(\Omega)}$ where $\Ab=\ab+\Fab$\,;\medskip
\item
$\|\psi\|_{L^\infty(\Omega)
}\leq 1$\,;\medskip
\item $\|\mathbf a\|_{H^2(\Omega)}\leq \displaystyle\frac{C_0}{h}\kappa \|\psi\|_{L^2(\Omega)} $\,.
\end{enumerate}
\end{prop}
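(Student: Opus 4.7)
The plan is to establish the five items in the logical order $(3)\to(4)\to(1),(5)\to(2)$. The starting point is the weak Euler--Lagrange system that any critical configuration satisfies: for every $(\varphi,\mathfrak b)\in H^1_{h\Fab}(\Omega;\C)\times H^1_{n0}(\Omega,\div 0)$,
\[
\int_\Omega (\nabla-ih\Ab)\psi\cdot\overline{(\nabla-ih\Ab)\varphi}\,dx=\kappa^2\int_\Omega(1-|\psi|^2)\psi\,\overline\varphi\,dx\,,
\]
together with the analogous identity coming from the variation in $\ab$. Items (3) and (4) are handled by testing this identity and by a Kato-type argument, whereas (1), (5) and (2) are of elliptic-regularity type and exploit the fact that, once $\ab$ is divergence free, the magnetic equation decouples from the singularity of $\Fab$.

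For (3), I would plug $\varphi=\psi$ into the weak formulation above (the insertion is legitimate because $\psi\in H^1_{h\Fab}(\Omega;\C)$ by Lemma~\ref{prop:vs}) to get
\[
\|(\nabla-ih\Ab)\psi\|_{L^2(\Omega)}^2=\kappa^2\int_\Omega(1-|\psi|^2)|\psi|^2\,dx\leq\kappa^2\|\psi\|_{L^2(\Omega)}^2\,.
\]
For (4), I would combine the first line of \eqref{eq:GLeq} with the magnetic Kato inequality to obtain the distributional subsolution inequality
\[
-\Delta|\psi|\leq \kappa^2(1-|\psi|^2)|\psi|\quad\text{in }\Omega\,,
\]
together with the Neumann-type boundary condition $\nu\cdot\nabla|\psi|=0$ on $\partial\Omega$ inherited from $\nu\cdot(\nabla-ih\Ab)\psi=0$. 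Testing against the admissible function $(|\psi|-1)_+\in H^1(\Omega)$ (note $|\psi|\in H^1(\Omega)$ by Lemma~\ref{lem:H1-mag}) and using the algebraic identity $(1-|\psi|^2)|\psi|(|\psi|-1)_+=-(1+|\psi|)|\psi|(|\psi|-1)_+^2$, one arrives at
\[
\|\nabla(|\psi|-1)_+\|_{L^2(\Omega)}^2\leq -\kappa^2\int_\Omega(1+|\psi|)|\psi|(|\psi|-1)_+^2\,dx\,,
\]
whose two sides, being non-negative and non-positive, must both vanish; this forces $(|\psi|-1)_+\equiv 0$, i.e.\ $|\psi|\leq 1$.

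For (1) and (5), the key algebraic trick is the two-dimensional identity $\Delta\ab=\nabla(\div\ab)+\nabla^\bot(\curl\ab)$. Combined with the gauge-fixing $\div\ab=0$ and the second line of \eqref{eq:GLeq}, it rewrites the magnetic equation as
\[
-\Delta\ab=\frac{1}{h}\,\mathrm{Im}\bigl(\overline\psi\,(\nabla-ih\Ab)\psi\bigr)\quad\text{in }\Omega,\qquad\nu\cdot\ab=0=\curl\ab\quad\text{on }\partial\Omega\,.
\]
By H\"older's inequality together with (3) and (4), the right-hand side belongs to $L^2(\Omega)$ with norm bounded by $\kappa\|\psi\|_{L^2(\Omega)}/h$. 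Since $\Omega$ is simply connected, I would use the stream-function reduction $\ab=\nabla^\bot\phi$ with $\phi\in H^1_0(\Omega)$; the equation then becomes a Poisson problem $\Delta\phi=\omega$ with $\omega:=\curl\ab$ satisfying $\nabla\omega\in L^2(\Omega)$ and the trace $\omega=0$ on $\partial\Omega$, which by elliptic regularity yields $\phi\in H^3(\Omega)\cap H^1_0(\Omega)$ and hence $\ab\in H^2(\Omega;\R^2)$ together with the quantitative bound in (5); alternatively, one invokes the curl-div inequality of \cite[Prop.~D.2.1]{FH-b} applied to $\nabla\ab$.

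For (2), I would bootstrap on the punctured set $\overline\Omega\setminus\{0\}$. Since $\Fab$ is $C^\infty$ away from the origin and $\ab\in H^2\hookrightarrow C^{0,\alpha}$ by the two-dimensional Sobolev embedding, $\Ab=\ab+\Fab$ is H\"older-continuous on any compact subset $K\subset\overline\Omega\setminus\{0\}$, and moreover $\div\Ab=0$ on $K$ so that the first line of \eqref{eq:GLeq} is a semilinear elliptic equation for $\psi$ with H\"older coefficients and $L^\infty$ right-hand side. Standard interior/boundary Schauder estimates (using the Neumann-type condition $\nu\cdot(\nabla-ih\Ab)\psi=0$) upgrade $\psi$ to $C^{2,\alpha}_{\rm loc}(\overline\Omega\setminus\{0\})$; reinserting this into the equation for $\ab$ improves $\ab$ to the same class, and iterating yields $(\psi,\ab)\in C^\infty(\overline\Omega\setminus\{0\})$. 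The main obstacle I anticipate is the verification of the sharp $h^{-1}$-dependence in (5): the div-curl boundary conditions $\nu\cdot\ab=0$ and $\curl\ab=0$ are not of standard Dirichlet or Neumann type, so the elliptic regularity constant $C_0$ must be shown to come solely from $\Omega$ (and in particular to be independent of $h$, $\kappa$, and the configuration) so that the bound encodes genuine decay as $h\to\infty$.
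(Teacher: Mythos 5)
Your proof is correct and, in its essentials, coincides with the paper's: item (3) is the first variation in the direction $\psi$ (identical to testing the weak Euler--Lagrange identity with $\varphi=\psi$); items (1) and (5) rest on the second equation of \eqref{eq:GLeq} together with $\div\,\ab=0$, $\nu\cdot\ab=0$ and $\curl\ab=0$ on $\partial\Omega$, which the paper exploits via $\curl\ab\in H^1_0(\Omega)$, the Poincar\'e inequality, and the curl-div estimate $\|\ab\|_{H^2(\Omega)}\le C_\Omega\|\curl\ab\|_{H^1(\Omega)}$ of \cite[Prop.~D.2.1]{FH-b} --- your stream-function reduction $\ab=\nabla^\bot\phi$ is just an unpacking of that same estimate, and since $C_\Omega$ and the Poincar\'e constant depend on $\Omega$ alone, the $h^{-1}$ scaling you flag as the main obstacle is automatic once (3)--(4) bound the supercurrent by $\kappa\|\psi\|_{L^2(\Omega)}$; item (2) is the same elliptic bootstrap (the paper cites \cite[Prop.~3.6]{SS-b}). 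The one place you genuinely deviate is (4): the paper never forms the distributional inequality $-\Delta|\psi|\le\kappa^2(1-|\psi|^2)|\psi|$, but instead inserts the truncation $\tilde\psi=[|\psi|-1]_+\,\psi/|\psi|$ (admissible in $H^1_{h\Fab}(\Omega;\C)$ because $|\psi|\in H^1(\Omega)$ by Lemma~\ref{prop:vs}) directly into the variational identity and concludes as in \cite[Prop.~10.3.1]{FH-b}. This difference is more than cosmetic in the present singular setting: your route requires justifying the magnetic Kato inequality across the Aharonov--Bohm point (doable, since a point has zero $H^1$-capacity in the plane) and, more delicately, the weak Neumann condition $\nu\cdot\nabla|\psi|=0$ that you invoke in order to test with $(|\psi|-1)_+\notin H^1_0(\Omega)$; the direct test-function argument bypasses both issues, so you should either adopt it or supply those two verifications. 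Finally, a point in your favor: your ordering $(3)\to(4)\to(1),(5)$ is logically tighter than the paper's presentation, because a priori $\mathrm{Im}\big(\overline{\psi}\,(\nabla-ih\Ab)\psi\big)$ lies only in $L^q(\Omega)$ for $q<2$ (Lemma~\ref{lem:H1-mag} gives $\psi\in L^p$ for all $p<\infty$, not $\psi\in L^\infty$), so one really wants $\|\psi\|_{L^\infty(\Omega)}\le1$ from (4) before placing the right-hand side of the curl equation in $L^2(\Omega)$ and concluding $\curl\ab\in H^1(\Omega)$ and $\ab\in H^2(\Omega;\R^2)$.
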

\begin{proof}
Since $\ab\in { H^1_{n0}(\Omega,\div0)}$, the second equation in \eqref{eq:GLeq} yields that $\curl\ab\in H^1(\Omega)$. By the curl-div estimate (see \cite[Prop.~D.2.1]{FH-b}),  { $\ab\in H^2(\Omega;\Bbb R^2)$} and
\begin{equation}\label{eq:curl-div}
\|\ab\|_{H^2(\Omega)}\leq C_\Omega\|\curl\ab\|_{H^1(\Omega)}\,,
\end{equation}
where $C_\Omega<+\infty$ depends on $\Omega$ only. This proves (1).

That $(\psi,\ab)$ is smooth in $\overline{\Omega}\setminus\{0\}$ follows by a bootstrapping argument (see \cite[Prop.~3.6]{SS-b}).

The identity $\frac{d}{dt}\mathcal E(\psi+t\psi,\Ab)\big|_{t=0}=0$ yields that
\begin{equation}\label{eq:E0}
\mathcal E_0(\psi,\Ab):=\int_\Omega
\left(|(\nabla-ih\Ab)\psi|^2-\kappa^2|\psi|^2+\frac{\kappa^2}{2}|\psi|^4\right)\,dx=-\frac{\kappa^2}{2}\int_\Omega|\psi|^4\,dx\leq0\,,\end{equation}
which proves (3).

Now we prove (4). We introduce the following function
$$\tilde\psi=[|\psi|-1]_+\frac{\psi}{|\psi|}:=\begin{cases}
(|\psi|-1)\frac{\psi}{|\psi|}&~{\rm if~}|\psi|\geq 1\\
0&{\rm~ if~}|\psi|\leq 1
\end{cases}\,.$$
By Lemma~\ref{prop:vs}, $|\psi|\in H^1(\Omega)$, hence $\tilde\psi\in H^1_{h\Fb_{\rm AB}}(\Omega;\C)$ because $\psi\in H^1_{\Fb_{\rm AB}}(\Omega;\C)$. So we can use the identity
$\frac{d}{dt}\Eab(\psi+t\tilde\psi,\Ab)=0$, which reads as follows,
$${\rm Re}\int_\Omega\left( (\nabla-ih\Ab)\psi\cdot\overline{(\nabla-ih\Ab)\tilde\psi} +(|\psi|^2-1)\psi\overline{\tilde\psi}\right)\,dx=0\,.$$  The rest of the proof is as
  \cite[Prop.~10.3.1]{FH-b}.

Finally, we prove (5). By the last equation in \eqref{eq:GLeq}, $\curl\ab\in H^1_0(\Omega)$, hence, by the Poincar\'e inequality
$$\|\curl\ab\|_{H^1(\Omega)}\leq C_\Omega'\|\nabla( \curl\ab)\|_{L^2(\Omega)}\,,$$
where $C_\Omega'$ depends on $\Omega$ only. Using \eqref{eq:curl-div} and the second equation in \eqref{eq:GLeq}, we get
$$\|\ab\|_{H^2(\Omega)}\leq \frac{C_\Omega C_\Omega'}{h} \big\|{\rm Im}\big(\overline{\psi}(\nabla-ih {\bf A})\psi\big)\big\|_{L^2(\Omega)}
\leq \frac{C_\Omega C_\Omega'}{h}\kappa\|\psi\|_{L^2(\Omega)}\,,$$
where we used (3) and (4) to write the last inequality.
\end{proof}

\subsection{The non-degenerate case}

Our next result is that for a minimizing configuration the order parameter is actually in the space $H^1(\Omega;\C)$ not just in the magnetic Sobolev space $H^1_{h\Fab}(\Omega;\C)$, except for the degenerate case where $h\in2\pi\Z$. This is related to a \emph{magnetic Hardy} inequality \cite{LW} (see Lemma~\ref{lem:1D-op} below).

\begin{prop}\label{prop:min-AB-H1}
Given $r_0,\kappa, h>0$ such that $D(0,r_0)\subset\Omega$, there exists $C>0$ such that every  minimizing configuration $(\psi,\ab)_h$ of $\Eab$ satisfies:
$$2\pi\alpha(h)\|\Fab\psi\|_{L^2(\Omega)}+\|(\nabla-ih\Fab)\psi\|_{L^2(\Omega)}\leq C\,,$$
where
\begin{equation}\label{eq:alpha(h)}
\alpha(h)=\inf_{n\in\Z}\left|n-\frac{h}{2\pi}\right|\,.
\end{equation}
In particular, for $h\not\in2\pi\mathbb Z$, $\psi\in H^1(\Omega;\C)$.
\end{prop}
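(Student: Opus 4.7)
\textbf{Proof plan for Proposition \ref{prop:min-AB-H1}.}

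The strategy is to split the estimate into two contributions: a bound on $\|(\nabla-ih\Fab)\psi\|_{L^2(\Omega)}$ coming from the a priori estimates of Proposition~\ref{prop:EL-eq}, and a bound on $\|\Fab\psi\|_{L^2(\Omega)}$ coming from a localized magnetic Hardy inequality (Lemma~\ref{lem:1D-op}), which is the only place where the Aharonov--Bohm flux $h/(2\pi)$ enters quantitatively through $\alpha(h)$.

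First, I would control $\|(\nabla-ih\Fab)\psi\|_{L^2(\Omega)}$. By item (3) of Proposition~\ref{prop:EL-eq}, $\|(\nabla-ih\Ab)\psi\|_{L^2(\Omega)}\le \kappa\|\psi\|_{L^2(\Omega)}$, and by item (4), $\|\psi\|_{L^2(\Omega)}\le |\Omega|^{1/2}$. Writing $\Ab=\ab+\Fab$ and applying the triangle inequality,
\begin{equation*}
\|(\nabla-ih\Fab)\psi\|_{L^2(\Omega)}\le \|(\nabla-ih\Ab)\psi\|_{L^2(\Omega)}+h\|\ab\psi\|_{L^2(\Omega)}.
\end{equation*}
By item (5) of Proposition~\ref{prop:EL-eq}, $\|\ab\|_{H^2(\Omega)}\le (C_0/h)\kappa\|\psi\|_{L^2(\Omega)}$, and the Sobolev embedding $H^2(\Omega)\hookrightarrow L^\infty(\Omega)$ (in dimension two) yields $h\|\ab\psi\|_{L^2(\Omega)}\le C_1\kappa\|\psi\|_{L^2(\Omega)}^2$. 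Both contributions are bounded by a constant depending on $\kappa,\Omega$ only, giving the desired uniform bound on $\|(\nabla-ih\Fab)\psi\|_{L^2(\Omega)}$.

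Next, I would handle $\|\Fab\psi\|_{L^2(\Omega)}$. Fix a smooth cutoff $\chi\in C_c^\infty(D(0,r_0))$ with $\chi\equiv 1$ on $D(0,r_0/2)$. On the complement $\Omega\setminus D(0,r_0/2)$, the potential satisfies $|\Fab(x)|\le (\pi r_0)^{-1}$, hence
\begin{equation*}
\int_{\Omega\setminus D(0,r_0/2)}|\Fab\psi|^2\,dx\le \frac{1}{\pi^2 r_0^2}\|\psi\|_{L^2(\Omega)}^2.
\end{equation*}
On $D(0,r_0/2)$, I would apply the magnetic Hardy inequality from Lemma~\ref{lem:1D-op} to the extension-by-zero of $\chi\psi$ to $\R^2$. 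Since $|\Fab(x)|^2=1/(4\pi^2|x|^2)$, this inequality provides a constant $c>0$ such that
\begin{equation*}
4\pi^2\alpha(h)^2\int_{\R^2}|\Fab|^2|\chi\psi|^2\,dx\le c\int_{\R^2}|(\nabla-ih\Fab)(\chi\psi)|^2\,dx.
\end{equation*}
Expanding the right-hand side and using $|\nabla\chi|\in L^\infty$ supported in $D(0,r_0)\setminus D(0,r_0/2)$ together with $\|\psi\|_{L^\infty(\Omega)}\le 1$, one obtains
\begin{equation*}
\alpha(h)^2\int_{D(0,r_0/2)}|\Fab\psi|^2\,dx\le C'\bigl(\|(\nabla-ih\Fab)\psi\|_{L^2(\Omega)}^2+\|\psi\|_{L^2(\Omega)}^2\bigr),
\end{equation*}
whose right-hand side is already bounded by the previous step. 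Combining the inner and outer estimates yields $4\pi^2\alpha(h)^2\|\Fab\psi\|_{L^2(\Omega)}^2\le C''$, and adding the two bounds gives the claim.

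Finally, for $h\notin 2\pi\Z$ we have $\alpha(h)>0$, so both $\Fab\psi$ and $(\nabla-ih\Fab)\psi$ lie in $L^2(\Omega)$; hence $\nabla\psi=(\nabla-ih\Fab)\psi+ih\Fab\psi\in L^2(\Omega;\C^2)$, proving $\psi\in H^1(\Omega;\C)$. The main subtlety is the application of the magnetic Hardy inequality, which is naturally formulated on $\R^2$ and must be transplanted to the bounded domain $\Omega$ via the cutoff $\chi$; the error terms generated by $\nabla\chi$ live away from the singularity and are therefore absorbed by the already-controlled $L^2$-norm of $\psi$.
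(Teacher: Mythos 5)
Your proposal is correct and follows essentially the same route as the paper: a uniform bound on $\|(\nabla-ih\Fab)\psi\|_{L^2(\Omega)}$ from the a priori estimates of Proposition~\ref{prop:EL-eq}, combined with a Hardy-type lower bound near the origin whose constant $\alpha(h)^2$ comes from Lemma~\ref{lem:1D-op}. The only cosmetic difference is your cutoff and extension-by-zero to $\R^2$: since the Hardy inequality is obtained by applying Lemma~\ref{lem:1D-op} fiberwise in the angular variable (the bound $\frac{1}{r^2}\int_0^{2\pi}|(\partial_\theta-i\frac{h}{2\pi})\psi|^2\,d\theta\geq \frac{\alpha(h)^2}{r^2}\int_0^{2\pi}|\psi|^2\,d\theta$ is pointwise in $r$), it holds directly on $D(0,r_0)$ in polar coordinates, as in the paper, so the cutoff $\chi$ and its $\nabla\chi$ error terms are superfluous though harmless.
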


The proof relies on the following one dimensional spectral analysis.

\begin{lem}\label{lem:1D-op}
For all $h>0$,
$$
\inf_{u\in H^1_{\rm per}(0,2\pi)\setminus\{0\}}\frac{
\displaystyle\int_{0}^{2\pi}
\left|{ {d\over d\theta} } u-i\frac{h}{2\pi}u\right|^2d\theta}{\displaystyle\int_0^{2\pi}|u|^2d\theta}= \alpha(h)^2\,,$$
where $\alpha(h)$ is introduced in Proposition~\ref{prop:min-AB-H1} and
$$
H^1_{\rm per}(0,2\pi)=\{u\in H^1(0,2\pi)~:~u(0)=u(2\pi)\}.
$$
\end{lem}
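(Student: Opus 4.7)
The plan is to diagonalize the operator $L_h := -i\frac{d}{d\theta} + \frac{h}{2\pi}$ (acting on periodic functions, whose square modulus gives the numerator up to a real multiple) by Fourier series. The orthonormal basis $\{e_n(\theta) = (2\pi)^{-1/2}e^{in\theta}\}_{n \in \mathbb Z}$ of $L^2(0,2\pi)$ consists of eigenfunctions of $L_h$ (and also of $(\tfrac{d}{d\theta}-i\tfrac{h}{2\pi})$) with simple eigenvalues, which makes the Rayleigh quotient explicit.

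First, for $u \in H^1_{\rm per}(0,2\pi)$, expand $u(\theta) = \sum_{n\in\mathbb Z} c_n e^{in\theta}$ with $\sum_n |c_n|^2 < \infty$. A term-by-term differentiation is valid in the $H^1$ sense, and gives
\begin{equation*}
\left(\frac{d}{d\theta} - i\frac{h}{2\pi}\right)u(\theta) = i\sum_{n\in\mathbb Z} \left(n - \frac{h}{2\pi}\right)c_n\, e^{in\theta}.
\end{equation*}
By Parseval's identity,
\begin{equation*}
\int_0^{2\pi} \left|\frac{d}{d\theta}u - i\frac{h}{2\pi}u\right|^2\,d\theta = 2\pi \sum_{n\in\mathbb Z} \left(n - \frac{h}{2\pi}\right)^2 |c_n|^2,\qquad \int_0^{2\pi}|u|^2\,d\theta = 2\pi\sum_{n\in\mathbb Z}|c_n|^2.
\end{equation*}

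Hence the Rayleigh quotient equals
\begin{equation*}
\frac{\sum_{n} (n-h/2\pi)^2 |c_n|^2}{\sum_n |c_n|^2} \;\geq\; \inf_{n\in\mathbb Z}\left(n-\frac{h}{2\pi}\right)^2 = \alpha(h)^2,
\end{equation*}
which gives the lower bound. To see that the bound is attained, pick $n_0 \in \mathbb Z$ realizing $\alpha(h) = |n_0 - h/(2\pi)|$ (the minimum exists because $(n-h/(2\pi))^2 \to \infty$ as $|n|\to\infty$), and take $u(\theta) = e^{in_0\theta}$, which belongs to $H^1_{\rm per}(0,2\pi)$ and for which the Rayleigh quotient equals exactly $\alpha(h)^2$.

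No step here is really delicate; the only point worth checking is that a periodic $H^1$ function admits an $L^2$-convergent Fourier expansion whose distributional derivative is given by the termwise differentiated series, which is standard and is exactly what makes Parseval apply to both numerator and denominator. The identification with $\alpha(h)^2$ then follows at once.
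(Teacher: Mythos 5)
Your proof is correct and takes essentially the same approach as the paper: both arguments diagonalize the problem by Fourier modes, identify the spectrum as $\bigl\{(n-h/(2\pi))^2 : n\in\mathbb Z\bigr\}$, and conclude that the infimum equals $\alpha(h)^2$, attained at $u=e^{in_0\theta}$. The only cosmetic difference is that the paper first gauges away the potential via $v=e^{-i\frac{h}{2\pi}\theta}u$, reducing to $-\frac{d^2}{d\theta^2}$ with the twisted boundary condition $v(0)=e^{ih}v(2\pi)$, whereas you expand $u$ directly in the standard periodic basis and apply Parseval --- the same diagonalization, equally complete.
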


\begin{proof}
For all $u\in H^1_{\rm per}(0,2\pi)$ { let $v=e^{-i\frac{h}{2\pi}\theta}u$.} Notice that $v(0)=e^{i h}v(2\pi)$ and
$$
q(u):=\int_{0}^{2\pi}\left|{ {d\over d\theta} } u-i\frac{h}{2\pi}u\right|^2d\theta=\int_{0}^{2\pi}\left|{ {d\over d\theta} } v\right|^2d\theta\,.
$$
So we are led to determine the spectrum of the operator $\mathcal L=-\frac{d^2}{d\theta^2}$ with the boundary condition
\begin{equation}\label{eq:1D-bc}
v(0)=e^{ih}\,v(2\pi)\,.
\end{equation}
It is easy to check that $e^{iw\theta}$ is an eigenfunction associated with the eigenvalue $\lambda:=w^2$, $w\in\R$, and that the boundary condition in \eqref{eq:1D-bc} reads as
$w+\frac{h}{2\pi}\in\Z$\,.  The completeness of the Fourier basis in $L^2(0,2\pi)$ asserts that the spectrum consists exactly of the aforementioned 
eigenvalues. \end{proof}

\begin{proof}[Proof of Proposition~\ref{prop:min-AB-H1}]
Using (4)-(5) in Proposition~\ref{prop:EL-eq}, the H\"older inequality and the Sobolev embedding $H^1(\Omega)\hookrightarrow L^4(\Omega)$, we write
$$\|\ab\,\psi\|_{L^2(\Omega)}
\leq
\|\ab\|_{L^4(\Omega)}\|\psi\|_{L^4(\Omega)}
\leq \frac{\tilde C\kappa}{h}\,,$$
for a constant $\tilde C$ independent of $(\kappa,h)$. Consequently, by the Minkowski inequality and (3) in Proposition~\ref{prop:EL-eq}, we get
$$\|(\nabla-ih\Fab)\psi\|_{L^2(\Omega)}\leq \kappa|\Omega|^{1/2}+\tilde C\kappa\,.$$
Now we express $\|(\nabla-ih\Fab)\psi\|_{L^2(D(0,r_0))}$ in polar coordinates $(r,\theta)$ as follows
$$ \|(\nabla-ih\Fab)\psi\|_{L^2(D(0,r_0))}^2
=\int_0^{2\pi}\int_0^{r_0} \left(|\partial_r\psi|^2+\frac1{r^2}\left|\partial_\theta-i\frac{h}{2\pi}\psi\right|^2\right)rdr d\theta\,.$$
Using Lemma~\ref{lem:1D-op}, we infer the following estimate,
\[
\|(\nabla-ih\Fab)\psi\|_{L^2(D(0,r_0))}^2
\geq \int_0^{2\pi}\int_0^{r_0} \left(\frac{\alpha(h)}{r^2}|\psi |^2\right)rdr d\theta=\big(2\pi\alpha(h)\big)^2\int_{D(0,r_0)}|\Fab\psi|^2\,dx\,.\]
\end{proof}

\begin{rem}\label{rem:ev-R2}
The proof of Proposition~\ref{prop:min-AB-H1} also yields that, for $h\not\in 2\pi\Z$, the space $H^1_{h\Fab}(\Omega;\C)$ is embeded in the space $H^1(\Omega;\C)\cap L^2(\Omega;|\Fab|^2\,dx)$ with the following inequality
\[\|\Fab u\|_{L^2(\Omega)}^2\leq C_h \Big(\|(\nabla-ih\Fab)u\|_{L^2(\Omega)}^2+\|u\|_{L^2(\Omega)}^2\Big)\quad (u\in H^1_{h\Fab}(\Omega;\C))\,,\]
with $C_h$ a constant dependent of $h$.
\end{rem}

\subsection{The degenerate case}

We determine the minimizers of the functional in \eqref{eq:GL-AB} in the degenerate case where $h\in2\pi\Z$.

\begin{prop}\label{prop:deg-case}
Assume that $h=2\pi n_0$ with $n_0\in\Z$. Then,
$$\Ea(h)=-\frac{\kappa^2}2|\Omega|$$
and every minimizer $(\psi,\Ab)$ has the form
$$\psi=c\,e^{in_0\theta}\quad{\rm and}\quad \quad \Ab=\Fab$$
with $c\in\C$ satisfying $|c|=1$.
\end{prop}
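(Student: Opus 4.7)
The strategy is to exhibit an explicit minimizer saturating the universal lower bound in \eqref{eq:GL-AB*}, and then read off the rigidity from the equality cases. The central observation is that when $h=2\pi n_0$ with $n_0\in\Z$, the function $\psi_0:=e^{in_0\theta}$ is single-valued and smooth on $\Omega\setminus\{0\}$, and in polar coordinates,
\[
\nabla\psi_0 = in_0(\nabla\theta)\psi_0 = ih\Fab\,\psi_0 \qquad \text{on }\Omega\setminus\{0\},
\]
so that $(\nabla - ih\Fab)\psi_0 = 0$ pointwise off the origin. Checking the defining condition \eqref{eq:cond-mag-der} with $\mathbf{g}=0$ and test functions in $C_c^\infty(\Omega\setminus\{0\};\C)$ (which is exactly the setting of Definition~\ref{def:H1-mag}), one concludes $\psi_0\in H^1_{h\Fab}(\Omega;\C)$ with vanishing magnetic gradient. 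Plugging $(\psi_0,\Fab)$ into \eqref{eq:GL-AB} produces
\[
\Eab(\psi_0,\Fab)=\int_\Omega\left(0-\kappa^2+\tfrac{\kappa^2}{2}\right)dx+0=-\tfrac{\kappa^2}{2}|\Omega|,
\]
which combined with the lower bound \eqref{eq:GL-AB*} gives $\Ea(h)=-\tfrac{\kappa^2}{2}|\Omega|$.

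For the characterization of minimizers, by the gauge reduction of Proposition~\ref{prop:vsAB} I may assume $\Ab=\ab+\Fab$ with $\ab\in H^1_{n0}(\Om,\div0)$. Tracing equality through the chain of inequalities in \eqref{eq:GL-AB*} forces simultaneously
\[
(\nabla-ih\Ab)\psi=0\ \text{a.e.}, \quad |\psi|=1\ \text{a.e.},\quad \curl\ab=0\ \text{a.e.}
\]
The field $\ab$ is then $H^1$, curl-free, divergence-free, with $\nu\cdot\ab=0$ on $\partial\Omega$. Because $\Omega$ is simply connected and $C^2$, the curl-free condition yields $\ab=\nabla f$ with $f\in H^2(\Omega)$; then $\Delta f=0$ and $\partial_\nu f=0$, so by uniqueness for the Neumann problem $f$ is constant and $\ab\equiv 0$. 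Thus $\Ab=\Fab$.

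It remains to identify $\psi$. By Proposition~\ref{prop:EL-eq}(2), $\psi$ is smooth on $\overline\Omega\setminus\{0\}$, so I define $u:=e^{-in_0\theta}\psi$ on $\Omega\setminus\{0\}$. A one-line chain-rule computation using the identity $\nabla e^{in_0\theta}=ih\Fab\,e^{in_0\theta}$ established in Step~1 together with $(\nabla-ih\Fab)\psi=0$ gives $\nabla u=0$. Since $\Omega$ is an open planar domain and $\{0\}$ has zero capacity, $\Omega\setminus\{0\}$ is connected, so $u\equiv c$ for some $c\in\C$ with $|c|=|\psi|=1$, yielding $\psi=c\,e^{in_0\theta}$ on $\Omega\setminus\{0\}$ and hence in $L^2(\Omega;\C)$.

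The main technical care is needed at the origin: I cannot treat $\psi_0=e^{in_0\theta}$ as an $H^1$ function (its classical gradient is not in $L^2$), so admissibility must be argued through Definition~\ref{def:H1-mag} using test functions supported away from $0$; this is precisely where the integer quantization $n_0\in\Z$ is essential (otherwise $\psi_0$ would not even be single-valued). The division step $u=e^{-in_0\theta}\psi$ is analogously done on $\Omega\setminus\{0\}$ and then extended to $\Omega$ by the fact that a single point is $L^2$-negligible.
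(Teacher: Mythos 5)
Your proposal is correct and follows essentially the same route as the paper: both saturate the lower bound in \eqref{eq:GL-AB*} with the test configuration $(e^{in_0\theta},\Fab)$, trace equality to force $\curl\ab=0$, $|\psi|=1$ and $(\nabla-ih\Fab)\psi=0$, and then divide by $e^{in_0\theta}$ to conclude $\psi=c\,e^{in_0\theta}$ with $|c|=1$. Your extra care in verifying $e^{in_0\theta}\in H^1_{h\Fab}(\Omega;\C)$ through Definition~\ref{def:H1-mag} and in deducing $\ab=0$ via the Neumann problem merely makes explicit what the paper leaves implicit (the latter follows at once from the curl-div inequality on $H^1_{n0}(\Omega,\operatorname{div}0)$), so no substantive difference remains.
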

\begin{proof}
The inequality $\Ea(h)\geq -\frac{\kappa^2}{2}|\Omega|$ follows from \eqref{eq:GL-AB*}. To obtain the reverse inequality, we write
$$\Ea(h)\leq \Eab(u,\Fab)$$
with  $u=e^{in_0\theta}$.  Using polar coordinates, we notice that
$$|(\nabla-ih\Fab)u|^2=|\partial_r u|^2+\frac1{r^2}\left|\left(\partial_\theta-i\frac{h}{2\pi}\right)u\right|^2=0\,.$$
This proves that $u\in H^1_{h\Fab}(\Omega;\C)$, $\Eab(u;\Fab)=-\frac{\kappa^2}2|\Omega|$ and $(u,\Fab)$ is a minimizer.

Now, assume that $(\psi,\ab)$ is a minimizing configuration of $\Eab$, i.e. $\Eab(\psi,\ab+\Fab)=\Ea(h)=-\frac{\kappa^2}2|\Omega|$. Notice that $\ab=0$, since $\ab\in { H^1_{n0}(\O,\div0)}$ and (see \eqref{eq:GL-AB*})
$$-\frac{\kappa^2}2|\Omega|+h^2\int_\Omega|\curl\ab|^2\,dx\leq \Eab(\psi,\ab+\Fab)=-\frac{\kappa^2}2|\Omega|\,.$$
The same argument yields that $|\psi|=1$ and $(\nabla-ih\Fab)\psi=0$, since
$$ -\frac{\kappa^2}2|\Omega|+\int_\Omega\left(|(\nabla-ih\Fab)\psi|^2+\frac{\kappa^2}2(1-|\psi|^2)^2\right)\,dx \leq \Eab(\psi,\Fab)=-\frac{\kappa^2}2|\Omega|\,.$$
By introducing the ansatz $\psi=c\,u$, we find
$$0=(\nabla-ih\Fab)\psi=c(\nabla-ih\Fab)u+u\nabla c=u\nabla c$$
hence $c$ must be a constant. Finally, the condition $|\psi|=1$ yields that $|c|=1$.

\end{proof}
\section{Minimizers  with  a magnetic step}\label{sec:minMS}

In this section we study the minimizers of the functional $\Eef$ introduced in \eqref{eq:GL-e}. Since this is associated with the magnetic potential $\Fb_\varepsilon\in H^1(\Omega;\R^2)$, the corresponding magnetic field $B_\varepsilon=\curl\Fb_\varepsilon$ is in $L^2(\Omega)$. Hence, we can use the results in
\cite[Thm.~10.2.1]{FH-b}. In particular, for all  $h>0$ and $\varepsilon\in(0,\varepsilon_0]$, there exists a configuration  $(\Psi,\Ab)_{h,\varepsilon}\in\mathcal H$ such that
\begin{equation}\label{eq:min-conf-e}
\Eef(\Psi,\Ab)=\Ee(h)\,,
\end{equation}
where $\Ee(h)$ is introduced in \eqref{eq:en-e}.
A configuration satisfying \eqref{eq:min-conf-e} is said to be a minimizer of $\Eef$.
Similarly as we did in Proposition~\ref{prop:vsAB}, we can use the gauge invariance to select a configuration
\begin{equation}\label{eq:space-h0}
(\psi_\varepsilon,\ab_\varepsilon)_{h}\in \mathcal H_0:=H^1(\Omega;\C)\times { H^1_{n0}(\O,\div0)}
\end{equation}
such that $(\psi_\varepsilon,\Ab_\varepsilon:=\ab_\varepsilon+\Fb_\varepsilon)$ is a minimizer of $\Eef$. Such a configuration is said to be a \emph{minimizing configuration} of $\Eef$. It satisfies the following properties (see \cite[Prop.~10.3.1\,\&\,Lem.~10.3.2]{FH-b}):
\begin{align}
&\|\psi_\varepsilon\|_{L^\infty(\Omega)}\leq 1\,,\label{eq:min-GL-e}\\
&\|(\nabla-ih\Ab_\varepsilon)\psi_\varepsilon\|_{L^2(\Omega)}\leq \kappa\|\psi_\varepsilon\|_{L^2(\Omega)}\,,\label{eq:min-GL-e'}\\
&\|\curl \ab_\varepsilon\|_{L^2(\Omega)}\leq \frac{\kappa^2}{h}\|\psi_\varepsilon\|_{L^2(\Omega)}\,,\label{eq:min-GL-e''}
\end{align}
 Furthermore, $(\psi_\varepsilon,\ab_\varepsilon)_h$ is a solution of
\begin{equation}\label{eq:GLeq-e}
\left\{\aligned
-&\big(\nabla-ih\Ab_\varepsilon\big)^2\psi_\varepsilon=\kp^2(1-|\psi_\varepsilon|^2)\psi_\varepsilon &{\rm in}\ \Omega\,,\\
-&\nabla^\bot  \big(\curl\ab_\varepsilon\big)= \frac{1}{h}{\rm Im}\big(\overline{\psi_\varepsilon}(\nabla-ih {\bf A_\varepsilon})\psi_\varepsilon\big) & {\rm in}\ \Omega\,,\\
&\nu\cdot(\nabla-ih {\bf A}_\varepsilon)\psi_\varepsilon=0 & {\rm on}\ \partial \Om \,,\\
&{\rm curl}\,\ab_\varepsilon=0 & {\rm on}\ \partial \Om \,.
\endaligned
\right.
\end{equation}
Using \eqref{eq:curl-div} and the second equation in \eqref{eq:GLeq-e}, we can prove that (see the proof of Prop.~\ref{prop:EL-eq}-(5)):
\begin{equation}\label{eq:curl-div*}
\|\ab_\varepsilon\|_{H^2(\Omega)}\leq \frac{\tilde C}{h}\,,
\end{equation}
where $\tilde C$ does not depend on $\varepsilon$ and $h$.

In the sequel, we study the behavior of the minimizing configurations of $\Eef$ as $\varepsilon$ approaches~$0$.

\begin{prop}\label{prop:min-MS}
Given $\omega\subset \Omega\setminus\{0\}$ and $h>0$, there exist $\varepsilon_0\in(0,1)$ and $C>0$ such that, for all $\varepsilon\in(0,\varepsilon_0]$,  every minimizing configuration $(\psi_\varepsilon,\ab_\varepsilon)_h$ of $\Eef$ satisfies
$$\|\psi_\varepsilon\|_{H^2(\omega)}\leq C\,.$$
\end{prop}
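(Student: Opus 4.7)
The proposition is, at its heart, an interior elliptic regularity statement away from the singular point of $\Fb_\varepsilon$. The key observation is that on any compact subset of $\Omega\setminus\{0\}$, for $\varepsilon$ sufficiently small the magnetic potential $\Fb_\varepsilon$ coincides with $\Fb_{\rm AB}$, which is smooth there. My plan is to combine the a priori estimates \eqref{eq:min-GL-e}--\eqref{eq:curl-div*} with standard elliptic regularity applied to $-\Delta\psi_\varepsilon$, after unfolding the first equation in \eqref{eq:GLeq-e}.

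First, I would pick an auxiliary open set $\omega'$ with $\omega\Subset\omega'\Subset\Omega\setminus\{0\}$ and set $\varepsilon_0=\tfrac12\dist(\omega',\{0\})$, so that $\Fb_\varepsilon\equiv\Fb_{\rm AB}$ on $\omega'$ for every $\varepsilon\in(0,\varepsilon_0]$. From \eqref{eq:curl-div*} and the two-dimensional Sobolev embedding $H^2(\Omega)\hookrightarrow L^\infty(\Omega)$, the gauge field $\ab_\varepsilon$ is bounded in $L^\infty(\Omega)$ uniformly in $\varepsilon$; since $\Fb_{\rm AB}\in C^\infty(\overline{\omega'};\R^2)$, the full magnetic potential $\Ab_\varepsilon=\ab_\varepsilon+\Fb_\varepsilon$ is uniformly bounded in $L^\infty(\omega')$. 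Combined with \eqref{eq:min-GL-e}, \eqref{eq:min-GL-e'} and the triangle inequality
\[
\|\nabla\psi_\varepsilon\|_{L^2(\omega')}\leq \|(\nabla-ih\Ab_\varepsilon)\psi_\varepsilon\|_{L^2(\Omega)}+h\|\Ab_\varepsilon\|_{L^\infty(\omega')}\|\psi_\varepsilon\|_{L^2(\Omega)},
\]
this yields a uniform bound of $\psi_\varepsilon$ in $H^1(\omega')$.

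Second, using that $\Div\ab_\varepsilon=0$ on $\Omega$ and $\Div\Fb_{\rm AB}=0$ on $\omega'$, I would expand the first equation of \eqref{eq:GLeq-e} into
\[
-\Delta\psi_\varepsilon=-2ih\,\Ab_\varepsilon\cdot\nabla\psi_\varepsilon-h^2|\Ab_\varepsilon|^2\psi_\varepsilon+\kappa^2(1-|\psi_\varepsilon|^2)\psi_\varepsilon\qquad\text{in }\omega'.
\]
By the previous step the right-hand side is uniformly bounded in $L^2(\omega')$. Interior elliptic regularity applied after multiplying by a cutoff equal to $1$ on $\omega$ and supported in $\omega'$ then produces the uniform bound $\|\psi_\varepsilon\|_{H^2(\omega)}\leq C$. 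If $\omega$ is allowed to touch $\partial\Omega$, one localises instead near the portion of $\partial\Omega$ meeting $\omega$ and invokes $C^2$ boundary elliptic regularity in conjunction with the magnetic Neumann condition $\nu\cdot(\nabla-ih\Ab_\varepsilon)\psi_\varepsilon=0$ from the third equation in \eqref{eq:GLeq-e}.

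I do not anticipate a substantive obstacle. The only point requiring attention is uniformity of all constants in $\varepsilon$; this is automatic because the estimate \eqref{eq:curl-div*} is uniform in $\varepsilon$, and $\Fb_{\rm AB}$ together with its derivatives are uniformly bounded on $\omega'$ by construction. In this sense the choice $\varepsilon_0=\tfrac12\dist(\omega',\{0\})$ is the one decisive step: it replaces the singular potential $\Fb_\varepsilon$ by the smooth restriction $\Fb_{\rm AB}|_{\omega'}$ and reduces the statement to a classical bootstrap for a linear second-order elliptic equation with smooth, $\varepsilon$-uniform coefficients.
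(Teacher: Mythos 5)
Your proposal is correct and follows essentially the same route as the paper's proof: freeze $\Fb_\varepsilon=\Fb_{\rm AB}$ away from the origin for $\varepsilon\leq\varepsilon_0$, use the uniform bounds \eqref{eq:min-GL-e}--\eqref{eq:curl-div*} together with $\Div\ab_\varepsilon=\Div\Fb_{\rm AB}=0$ to control the expanded right-hand side of the first equation in \eqref{eq:GLeq-e} in $L^2$, and conclude by $L^2$ elliptic estimates with the magnetic Neumann condition $\nu\cdot\nabla\psi_\varepsilon=ih(\nu\cdot\Fb_{\rm AB})\psi_\varepsilon$ on $\partial\Omega$ (which uses $\nu\cdot\ab_\varepsilon=0$). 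The only cosmetic difference is that the paper works at once on $\tilde\omega=\Omega\setminus\overline{D(0,\varepsilon_0)}$, citing \cite[Thm.~E.4.6]{FH-b}, whereas you split the argument into an interior cutoff step and a separate boundary localisation.
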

\begin{proof}
For $\varepsilon_0$ sufficiently small and $\varepsilon\in(0,\varepsilon_0]$,  we have
$$\omega\subset  \Omega \setminus \overline{D(0,2\varepsilon_0)} \subset \Omega\setminus \overline{D(0,\varepsilon)}\,.$$
Hence $\Fb_\varepsilon=\Fab$ on $\tilde\omega:=\Omega\setminus \overline{D(0,\varepsilon_0)}$, it is smooth and the first equation in \eqref{eq:GLeq-e} reads as follows
$$-\Delta\psi_\varepsilon+2ih(\ab_\varepsilon+\Fab)\cdot\nabla\psi_\varepsilon+h^2|\ab_\varepsilon+\Fab|^2\psi_\varepsilon=\kappa^2(1-|\psi_\varepsilon|^2)\psi_\varepsilon~{\rm in~}\tilde\omega\,, $$
since ${\rm div}(\ab_\varepsilon)={\rm div}(\Fab)=0$. Using \eqref{eq:min-GL-e}, \eqref{eq:min-GL-e'} and \eqref{eq:curl-div*}, we get
$$\|\Delta\psi_\varepsilon\|_{L^2(\tilde\omega)}\leq \hat C\,.$$
Note that, on $\partial\Omega$, the following boundary condition holds (which results from the third equation in \eqref{eq:GLeq-e} and the boundary condition on { $\ab_\varepsilon\in H^1_{n0}(\Omega,\div0)$, hence $\nu\cdot\ab_\varepsilon=0$,} see \eqref{eq:H1-div}):
$$\nu\cdot\nabla\psi_\varepsilon=ih(\nu \cdot\Fab)\psi_\varepsilon\,,$$
with $\nu\cdot\Fab$ a continuous function on $
\partial\Omega$, by smoothness of the boundary. Consequently, we can apply the $L^2$-elliptic estimates and finish the proof (see \cite[Thm.~E.4.6]{FH-b}).
\end{proof}

\begin{prop}\label{prop:min-MS*}
Given $\alpha\in(0,1)$, $h>0$ and a sequence $(\varepsilon_n)_{n\geq 1}\subset\R_+$ which converges to $0$, there exist
$(\psi_*,\ab_*)\in H^1_{h\Fab}(\Omega;\C)\times { H^1_{n0}(\O,\div0)}$ and a subsequence
$$(\psi_{\varepsilon_n},\ab_{\varepsilon_n},\varepsilon_n)_{n\in I}\subset H^1(\Omega;\C)\times { H^1_{n0}(\O,\div0)}\times\R_+$$
 such that
\begin{enumerate}
\item $(\psi_{\varepsilon_n},\ab_{\varepsilon_n})_h$ is a minimizing configuration of $\Eefn$\,;\medskip
\item For every open set $\omega\subset\Omega\setminus\{0\}$, $\psi_{\varepsilon_n}\to\psi_*$ in { $H^1(\omega;\Bbb C)$}\,;\medskip
\item $\ab_{\varepsilon_n}\to\ab_*$ in { $H^1(\Omega;\Bbb R^2)$}\,;\medskip
\item $\psi_{\varepsilon_n},\psi_*\in {C^{0,\alpha}_{\rm loc}(\overline{\Omega}\setminus\{0\};\Bbb C)}$ and $\ab_{\varepsilon_n},\ab_*\in {C^{0,\alpha}(\overline{\Omega};\Bbb R^2)}$\,;\medskip
\item  $\psi_{\varepsilon_n}\to\psi_*$ in  { $C^{0,\alpha}_{\rm loc}(\overline{\Omega}\setminus\{0\};\Bbb C)$} and $\ab_{\varepsilon_n}\to\ab_*$  in { $C^{0,\alpha}(\overline{\Omega};\Bbb R^2)$}\,;\medskip
\item $\|\psi_*\|_{L^\infty(\Omega)}\leq 1$\,;\medskip
\item $\psi_{\Be\varepsilon_n}\to\psi_*$ in { $L^p(\Omega;\Bbb C)$}, for all $p\in[2,+\infty)$\,;\medskip
\item $(\nabla-ih\F_{\varepsilon_n})\psi_{\varepsilon_n}\rightharpoonup (\nabla-ih\Fab)\psi_*$ in { $L^2(\Omega;\Bbb R^2)$}\,;\medskip
\item $\liminf\limits_{n\to\infty}\Eefn(\psi_{\varepsilon_n},\ab_{\varepsilon_n}+\Fb_{\varepsilon_n})\geq \Eab(\psi_*,\ab_*+\Fab)$\,,
\end{enumerate}
where $\Eab$ is the functional introduced in \eqref{eq:GL-AB}.
\end{prop}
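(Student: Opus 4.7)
The plan combines three ingredients: the a priori estimates \eqref{eq:min-GL-e}--\eqref{eq:curl-div*} on the minimizing configurations $(\psi_{\varepsilon_n},\ab_{\varepsilon_n})_h$, the local $H^2$-estimate of Proposition~\ref{prop:min-MS}, and the magnetic-Sobolev compactness machinery of Proposition~\ref{prop:min-seq} applied with the fixed potential $\fb=h\Fab$.

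First I would extract $\ab_*$. By \eqref{eq:curl-div*} the sequence $(\ab_{\varepsilon_n})$ is bounded in $H^2(\Omega;\R^2)$; using the compact embeddings $H^2\hookrightarrow H^1$ and $H^2\hookrightarrow C^{0,\alpha}$ (valid on a bounded $C^2$ planar domain for every $\alpha\in(0,1)$), a subsequence converges to some $\ab_*\in H^2(\Omega;\R^2)\cap H^1_{n0}(\Omega,\div0)$ weakly in $H^2$, strongly in $H^1$ and in $C^{0,\alpha}(\overline{\Omega};\R^2)$. This already yields items (3), the vector-field half of (4) and (5), and the convergence $\|\curl\ab_{\varepsilon_n}\|_{L^2}\to\|\curl\ab_*\|_{L^2}$.

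Next I would extract $\psi_*$. The bounds \eqref{eq:min-GL-e}--\eqref{eq:min-GL-e'} together with Lemma~\ref{prop:vs} show $(|\psi_{\varepsilon_n}|)$ is bounded in $H^1(\Omega)$, while Proposition~\ref{prop:min-MS} furnishes, for every relatively compact $\omega\Subset\Omega\setminus\{0\}$, a uniform bound on $\|\psi_{\varepsilon_n}\|_{H^2(\omega)}$. A diagonal extraction along an exhaustion $\omega_k\uparrow\Omega\setminus\{0\}$ produces a subsequence and a function $\psi_*$ with $\psi_{\varepsilon_n}\to\psi_*$ strongly in $H^1_{\rm loc}(\Omega\setminus\{0\};\C)$ and in $C^{0,\alpha}_{\rm loc}(\overline{\Omega}\setminus\{0\};\C)$, whence the $C^{0,\alpha}$ portion of (4)--(5) and also $\|\psi_*\|_{L^\infty(\Omega)}\leq1$ by taking pointwise limits, i.e.\ item~(6). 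To promote this to global $L^p$ convergence (item~(7)), I would identify the $L^p(\Omega)$ limit $\zeta$ of $|\psi_{\varepsilon_n}|$ with $|\psi_*|$ by combining the local convergence with monotone convergence on $\Omega\setminus I_\delta$ as $\delta\to0_+$ (exactly as in Steps~3--4 of the proof of Proposition~\ref{prop:min-seq}), and then use the $L^\infty$ bound via dominated convergence.

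The crux is items~(8) and (1), namely to show that $\psi_*\in H^1_{h\Fab}(\Omega;\C)$ and $(\nabla-ih\Fb_{\varepsilon_n})\psi_{\varepsilon_n}\rightharpoonup(\nabla-ih\Fab)\psi_*$ in $L^2(\Omega;\C^2)$. Writing $(\nabla-ih\Fb_{\varepsilon_n})\psi_{\varepsilon_n}=(\nabla-ih\Ab_{\varepsilon_n})\psi_{\varepsilon_n}+ih\ab_{\varepsilon_n}\psi_{\varepsilon_n}$ and using \eqref{eq:min-GL-e'} together with the uniform $L^\infty$ bound on $\ab_{\varepsilon_n}$ shows the left-hand side is bounded in $L^2(\Omega;\C^2)$, so up to a subsequence it converges weakly to some $w\in L^2$. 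The key observation is that $\Fb_{\varepsilon_n}=\Fab$ on $\Omega\setminus\overline{D(0,\delta)}$ as soon as $\varepsilon_n<\delta$; hence on this set the weak limit agrees with $(\nabla-ih\Fab)\psi_*$, and a monotone-convergence step $\delta\to0_+$ identifies $w=(\nabla-ih\Fab)\psi_*\in L^2(\Omega;\C^2)$ on all of $\Omega$, giving $\psi_*\in H^1_{h\Fab}(\Omega;\C)$ and (8). The main obstacle is precisely this identification, because the potentials $\Fb_{\varepsilon_n}$ themselves evolve near the puncture and $\Fab\psi_*$ need not be integrable a priori near $0$; the device for surmounting it is to use only test functions supported in $\Omega\setminus I_\delta$ and exhaust, mirroring Steps~5--6 of Proposition~\ref{prop:min-seq}.

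Finally, (9) is obtained by decomposing $\Eefn$ as in \eqref{eq:GL-AB*}: the curl term passes to the limit by the strong $H^1$-convergence of $\ab_{\varepsilon_n}$, the $(1-|\psi_{\varepsilon_n}|^2)^2$ term passes by the $L^4$-convergence from item~(7), and the magnetic kinetic energy satisfies $\liminf_{n\to\infty}\|(\nabla-ih\Ab_{\varepsilon_n})\psi_{\varepsilon_n}\|_{L^2(\Omega)}^2\geq\|(\nabla-ih(\ab_*+\Fab))\psi_*\|_{L^2(\Omega)}^2$ by weak lower semicontinuity applied on $\Omega\setminus I_\delta$ (where we may substitute $\Fab$ for $\Fb_{\varepsilon_n}$) followed by monotone convergence $\delta\to0_+$.
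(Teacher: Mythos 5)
Your proposal is correct and takes essentially the same route as the paper's own proof: the a priori bounds \eqref{eq:min-GL-e}--\eqref{eq:curl-div*}, the local $H^2$-estimate of Proposition~\ref{prop:min-MS}, diagonal extraction over the perforated domains $\Omega_r=\Omega\setminus\overline{D(0,r)}$ with the compact embeddings $H^2\hookrightarrow H^1$ and $H^2\hookrightarrow C^{0,\alpha}$, the $L^p$-convergence argument borrowed from Proposition~\ref{prop:min-seq}, and a monotone-convergence exhaustion for the energy lower bound in item~(9). The only (immaterial) variation is at item~(8), where the paper identifies the weak $L^2$-limit via $\Fb_{\varepsilon_n}\to\Fab$ in $L^q(\Omega;\R^2)$, $q\in[1,2)$, together with a distributional argument on $\Omega\setminus\{0\}$, whereas you exploit the exact identity $\Fb_{\varepsilon_n}=\Fab$ on $\Omega\setminus\overline{D(0,\delta)}$ for $\varepsilon_n<\delta$ and exhaust in $\delta$ --- both arguments are valid and of comparable length.
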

\begin{proof}
Consider a sequence $(\varepsilon_n)_{n\geq 1}\subset\R_+$ such that $\lim\limits_{n\to\infty}\varepsilon_n=0$. For all $n\geq 1$, choose a minimizing configuration $(\psi_{\varepsilon_n},\ab_{\varepsilon_n})_h$ of $\Eefn$.

By Proposition~\ref{prop:min-MS}, for all $r\in(0,\varepsilon_0]$, there exists $C,N_0>0$ such that
$$
\|\psi_{\varepsilon_n}\|_{H^2(\Omega_r)}\leq C\,,\q \forall\,n\geq N_0\,,
$$
where $\Omega_r=\Omega\setminus \overline{D(0,r)}$. By a diagonal sequence argument, we can construct a function\footnote{Initially, $\psi_*$ is defined on every $\Omega_r$, but can be defined pointwise on all of $\Omega\setminus\{0\}$ as follows. Given $x\in\Omega\setminus\{0\}$, we can choose $r$ so that $x\in\Omega_r$, then we set $\psi_*(x)=\lim\limits_{\varepsilon_n\to0_+}\psi_{\varepsilon_n}(x)$\,.}\break$\psi_*:\Omega\setminus\{0\}\to\C$ and extract a subsequence of
$(\psi_{\varepsilon_n})_{n\in I_0}$ which is  weakly convergent to $\psi_*$ in every { $H^2(\Omega_r;\Bbb C)$}, $r\in(0,\varepsilon_0]$.

At the same time, \eqref{eq:curl-div*} yields a function $\ab_*\in H^2(\Omega;\R^2)$ and a subsequence  $(\ab_{\varepsilon_n})_{n\in I_1\subset I_0}$ which converges weakly to $\ab_*$ in $H^2(\Omega;\R^2)$.

In light of the estimates in \eqref{eq:min-GL-e}-\eqref{eq:min-GL-e''}, we see that the sequence $\big((\nabla-ih\Fb_{\varepsilon_n})\psi_{\varepsilon_n}\big)$ is bounded in { $L^2(\Omega;\Bbb C^2)$}. So we can extract a  subsequence $\big((\nabla-ih\Fb_{\varepsilon_n})\psi_{\varepsilon_n}\big)_{n\in I_2\subset I_1}$ that  is weakly convergent in { $L^2(\Omega;\Bbb C^2)$}, and denote its weak limit by $g$.

By compactness of the embedding $H^2(U)\hookrightarrow H^1(U)$ and $H^2(U) \hookrightarrow C^{0,\alpha}(\overline{U})$, we can extract a further subsequence, $(\psi_{\varepsilon_n},\ab_{\varepsilon_n})_{n\in I\subset I_2}$, which converges to $(\psi_*,\ab_*)$ in { $H^1(\Omega_r;\Bbb C)\times H^1(\Omega;\Bbb R^2)$ and in $C^{0,\alpha}(\Omega_r;\Bbb C)\times C^{0,\alpha}(\Omega;\Bbb R^2)$}.  This proves (2)-(5).

The estimate $\|\psi_*\|_{L^\infty(\Omega)}\leq 1$ follows from \eqref{eq:min-GL-e}; actually, for every $x\in\Omega\setminus\{0\}$, we can find $r>0$ such that $x\in\Omega_r$ and consequently $\psi_{\varepsilon_n}(x)\to\psi_*(x)$. This proves (6) and also  that $\psi_*\in { L^p(\Omega;\Bbb C)}$ for all $p\geq 1$.

We can prove that $\psi_{\varepsilon_n}\to \psi_*$ in { $L^p(\Omega;\Bbb C)$} by repeating the argument used in the proof of Proposition~\ref{prop:min-seq} (Step~4). In particular, we now know that
\begin{equation}\label{eq:conv:Lp-ms}
\lim_{n\to+\infty}\int_\Omega|\psi|^p\,dx=\int_\Omega|\psi_*|^p\,dx\,.
\end{equation}

Now we prove that $\psi_*\in { H^1_{h\Fab}(\Omega;\Bbb C)}$.   Note that $\Fb_{\varepsilon_n}\to\Fab$ in { $L^q(\Omega;\Bbb R^2)$} for all $q\in[1,2)$; moreover, by \eqref{eq:min-GL-e}, we deduce that
$$
(\nabla-ih\Fb_{\varepsilon_n})\psi_{\varepsilon_n}-(\nabla-ih\Fab)\psi_{\varepsilon_n}=-ih(\Fb_{\varepsilon_n}-\Fab)\psi_{\varepsilon_n}\to0\quad{\rm in~} { L^q(\Omega;\Bbb C^2)}\,.
$$
Since $g$ is the weak limit  of $(\nabla-i\Fb_{\varepsilon_n})\psi_{\varepsilon_n}$ in { $L^2(\Omega;\Bbb C^2)$}, we deduce the following convergence in the distributional sense,
$$
(\nabla-ih\Fab)\psi_{\varepsilon_n}\to g~{\rm in~}{ \mathcal D'(\Omega\setminus\{0\};\Bbb C^2)}\,.
$$
Pick an arbitrary test function { $\varphi\in C_c^\infty(\Omega\setminus\{0\};\Bbb C)$}. By H\"older's inequality,
$$|\langle \psi_{\varepsilon_n}-\psi_*,(\nabla-ih\Fab)\varphi\rangle|\leq \|\psi_{n}-\psi_*\|_{L^p(\Omega)}\|(\nabla-i\Fab)\varphi\|_{L^q(\Omega)}$$
with $\frac1p+\frac1q=1$, $p\in(2,+\infty)$ and $q\in(1,2)$; this proves that $(\nabla-ih\Fab)\psi_{\varepsilon_n}\to(\nabla-ih\Fab)\psi_*$ in { $\mathcal D'(\Omega\setminus\{0\};\Bbb C^2)$}; consequently, $(\nabla-ih\Fab)\psi_*=g\in { L^2(\Omega;\Bbb C^2)}$, which proves (8).

So far we proved the statements (1)-(8) of Proposition~\ref{prop:min-MS*}; it remains to prove the statement (9). For $n\in I$ sufficiently large and $r$ sufficiently small, $\Fb_{\varepsilon_n}=\Fab$ in $\Omega_r$; hence
$$ \int_{\Omega}|(\nabla-i(\ab_{\varepsilon_n}+\Fb_{\varepsilon_n}))\psi_{\varepsilon_n}|^2\,dx\geq \int_{\Omega_r}|(\nabla-i(\ab_{\varepsilon_n}+\Fab))\psi_{\varepsilon_n}|^2\,dx\,.$$
As a consequence of the foregoing inequality, we get
$$\lim_{n\to\infty}\int_{\Omega_r}|(\nabla-i(\ab_{\varepsilon_n}+\Fab))\psi_{\varepsilon_n}|^2\,dx=\int_{\Omega_r}|(\nabla-i(\ab_{*}+\Fab))\psi_{*}|^2\,dx\,.$$
By monotone convergence, we get further
$$\lim_{r\to0_+}\int_{\Omega_r}|(\nabla-i(\ab_{*}+\Fab))\psi_{*}|^2\,dx=\int_{\Omega}|(\nabla-i(\ab_{*}+\Fab))\psi_{*}|^2\,dx\,.$$
This argument  yields that
\begin{equation}\label{eq:mg-e-AB}
\liminf_{n\to+\infty}\int_{\Omega}|(\nabla-i(\ab_{\varepsilon_n}+\Fb_{\varepsilon_n}))\psi_{\varepsilon_n}|^2\,dx\geq \int_{\Omega}|(\nabla-i(\ab_{*}+\Fab))\psi_{*}|^2\,dx\,.\end{equation}
Collecting \eqref{eq:conv:Lp-ms},   \eqref{eq:mg-e-AB} and the convergence established in (5), we finish the proof of (9).
\end{proof}

\section{Proof of the main theorems}\label{sec:thmAB}

In this section, we prove our main Theorems~\ref{thm:ms},
\ref{thm:AB-ns} and \ref{thm:ev-AB}.

\begin{proof}[Proof of Theorem~\ref{thm:ms}]~

\paragraph{\bf Lower bound}

It follows from Proposition~\ref{prop:min-MS*} that
\begin{equation}\label{eq:AB-MS-lb}
\liminf_{\varepsilon\to0_+}\Ee(h)\geq \Eab(\psi_*,\ab_*+\Fab)\geq \Ea(h)\,.
\end{equation}

\paragraph{\bf Upper bound.}\medskip

\paragraph{\it The non-degenerate case}

Assume that $h\not\in 2\pi\Z$ and consider a minimizing configuration $(\psi,\ab)$ of $\Eab$. By Proposition~\ref{prop:min-AB-H1}, $\psi\in H^1(\Omega;\C)$ and $\Fab\psi\in { L^2(\Omega;\Bbb C^2)}$. By dominated convergence,
\begin{equation}\label{eq:min-AB-H1}
\lim_{\varepsilon\to0_+}\int_{D(0,\varepsilon)}|\nabla\psi-ih\ab\psi|^2\,dx=0\quad{\rm and}\quad \lim_{\varepsilon\to0_+}\int_{D(0,\varepsilon)}|\Fab\psi|^2\,dx=0\,.
\end{equation}
For all $\varepsilon\in(0,\varepsilon_0]$, $|\Fb_\varepsilon|\leq |\Fab |$ in $D(0,\varepsilon)$, hence, by  \eqref{eq:min-AB-H1},
\begin{equation}\label{eq:min-AB-H1*}
\lim_{\varepsilon\to0_+}\int_{D(0,\varepsilon)}|\Fb_\varepsilon\psi|^2\,dx =0\,.
\end{equation}
Furthermore, for all $\varepsilon\in(0,\varepsilon_0]$,
\begin{align*}
\Ea(h)=\Eab(\psi,\ab+\Fab)&\geq \Eef(\psi,\ab+\Fb_\varepsilon)-\int_{D(0,\varepsilon)}|(\nabla-ih(\ab+\Fb_\varepsilon))\psi|^2\,dx\\
&\geq \Ee(h)-\int_{D(0,\varepsilon)}|(\nabla-ih(\ab+\Fb_\varepsilon))\psi|^2\,dx\,.
\end{align*}
 Using \eqref{eq:min-AB-H1} and \eqref{eq:min-AB-H1*}, we get that
 $$\lim_{\varepsilon\to0_+}\int_{D(0,\varepsilon)}|(\nabla-ih(\ab+\Fb_\varepsilon))\psi|^2\,dx=0$$
 and consequently
 \begin{equation}\label{eq:Ea>Ee}
 \Ea(h)\geq \limsup_{\varepsilon\to0_+}\Ee(h)\,.
 \end{equation}
Combining this and \eqref{eq:AB-MS-lb}, we get that $(\psi_*,\ab_*)$ is a minimizing configuration of $\Eab$.\medskip

\paragraph{\it The degenerate case}

It remains to prove the inequality \eqref{eq:Ea>Ee} when $h=2\pi n_0$ and $n_0\in\Z$. We introduce the  test function defined in polar coordinates as follows
\begin{equation}\label{eq:t-state-w-ep}
{ w_\var}=\chi_{\varepsilon,p}(r) u(\theta)
\end{equation}
where
$$
p\in(0,1)\,,\quad\chi_{\varepsilon,p}(r)=\begin{cases}\left(\frac{r}{\sqrt{\varepsilon}}\right)^p&{\rm ~if~}0<r<\sqrt{\varepsilon}\\
1&{~\rm if~}r\geq \sqrt{\varepsilon}\end{cases}\quad{\rm and}\quad  u(\theta)=e^{in_0\theta}\,.
$$
 Clearly, ${ w_\var}\in H^1(\Omega;\C)$ and
$$
\lim_{\varepsilon\to0_+}\int_{\Omega}|{ w_\var}|^2\,dx=\lim_{\varepsilon\to0_+}\int_{\Omega}|{ w_\var}|^4\,dx=|\Omega|\,.
$$
Knowing that $(\nabla-i\Fab)u=0$ (see the proof of Proposition~\ref{prop:deg-case}), and that $\Fab=\Fb_\varepsilon$ in $\Omega\setminus D(0,\varepsilon)$, we get
\begin{align*}
&\int_{D(0,\sqrt{\varepsilon})} |(\nabla-ih\Fb_\varepsilon){ w_\var}|^2\,dx\\
&\hskip1cm=\int_0^{2\pi}\left(\int_0^{\sqrt{\varepsilon}} |\partial_r{ w_\var}|^2rdr+\int_\varepsilon^{\sqrt{\varepsilon}}
\frac1{r}
\left|\left(\partial_\theta-in_0\right){ w_\var}\right|^2dr+ \int_0^{\varepsilon}\frac1{r}\left|\left(\partial_\theta-i\frac{n_0r^2}{\varepsilon^2}\right){ w_\var}\right|^2dr\right)d\theta\\
&\hskip1cm=2\pi\int_0^{\sqrt{\varepsilon}}|\chi_{\varepsilon,p}'(r)|^2rdr+2\pi\int_0^{\varepsilon}\frac{n_0^2}{r}\left(1-\frac{r^2}{\varepsilon^2}\right)^2|\chi_{\varepsilon,p}(r)|^2dr\\
&\hskip1cm\leq \pi\left(p+\frac{n_0^2}{p}\varepsilon^{p}\right)\,.
\end{align*}
Writing $\Ee(h)\leq \Eef(w_\varepsilon, \Fb_\varepsilon)$ then taking the limit as $\varepsilon\to0_+$, we infer from the foregoing considerations that
$$\limsup_{\varepsilon\to0_+}\Ee(h)\leq \pi p-\frac{\kappa^2}2|\Omega|\,.$$
Now we send $p$ to $0$ and get
$$\limsup_{\varepsilon\to0_+}\Ee(h)\leq -\frac{\kappa^2}{2}|\Omega|,
$$
which yields the inequality in \eqref{eq:Ea>Ee} in the case $h\in2\pi\Z$, thanks  to Proposition~\ref{prop:deg-case}.\medskip

\paragraph{\bf End of the proof} Having proved that $\lim\limits_{\varepsilon\to0_+}\Ee(h)=\Ea(h)$, we get item (1) in Theorem~\ref{thm:ms}; the statements in item (3)  follow from Proposition~\ref{prop:deg-case}.

Now we need to prove the statement in  item (2). Let $h>0$; we will  prove that $\Ea(h+2\pi)=\Ea(h)$. Indeed, let $(\psi,\ab)_h$ be a minimizing configuration of the functional $\Eab$; it is easy to check that $(\psi_{\rm new},\ab_{\rm new}):=(e^{i\theta}\psi,\frac{h}{h+2\pi}\ab)$ is a minimizing configuration of $\mathcal E_{\rm AB_{h+2\pi}}$, since the expression of the Laplacian in polar coordinates yields
$\nabla (e^{i\theta})=2\pi\Fab e^{i\theta}$.
\end{proof}

\begin{proof}[Proof of Theorem~\ref{thm:AB-ns}]~

Let $(\psi,\ab)_{\kappa,h}$ be a critical configuration of $\Eab$, i.e. a solution of \eqref{eq:GLeq}. We denote by $\|\cdot\|_p$ the usual norm in $L^p(\Omega)$. Expanding the term $\|(\nabla-ih(\ab+\Fab))\psi\|^2_2$, we find
$$\|(\nabla-ih(\ab+\Fab))\psi\|^2_2=\|(\nabla-ih\Fab)\psi\|^2-h^2\|\ab\psi\|^2_2-2h\langle \mathbf j,\mathbf a\rangle\,,$$
where
$$\mathbf j={\rm Im}\big(\overline{\psi}\,(\nabla-ih(\ab+\Fab))\psi\big)=-h\nabla^\bot \curl\ab$$
by the second equation in \eqref{eq:GLeq}. Consequently, by integration by parts,
 $$\langle \mathbf j,\mathbf a\rangle=-h\langle \nabla^\bot\curl\ab,\ab\rangle=h\langle \curl\ab,\curl\ab\rangle=h\|\curl\ab\|^2_2\,.$$
Therefore, after introducing the  energy
$$\mathcal E_0(\psi,\ab+\Fab)= \|(\nabla-ih(\ab+\Fab))\psi\|^2_2-\kappa^2\|\psi\|^2_2+\frac{\kappa^2}{2}\|\psi\|_4^4$$ we get the useful identity
\begin{equation}\label{eq:en-id-new}
\mathcal E_0(\psi,\ab+\Fab)=\mathcal E_0(\psi,\Fab)-h^2\|\ab\psi\|^2_2-2h^2\|\curl\ab\|_2^2\,.
\end{equation}
By the first equation in \eqref{eq:GLeq}, $\mathcal E_0(\psi,\ab+\Fab)=-\frac{\kappa^2}{2}\|\psi\|^4_4\leq 0$, hence we infer from \eqref{eq:en-id-new} the following estimate
\begin{equation}\label{eq:est-E0-new}
0\geq \left(\lambda_{\rm AB}(h)-\kappa^2\right)\|\psi\|_2^2-
h^2\|\ab\|_4^2\|\psi\|_4^2-2h^2\|\curl\ab\|^2_2\,,
\end{equation}
after applying the min-max principle   and the H\"older inequality to estimate the terms $\|(\nabla-ih\Fab)\psi\|_2^2$ and $\|\ab\psi\|_2^2$ respectively.

We estimate the term $\|\curl\ab\|_2^2$ using the second equation in \eqref{eq:GLeq}  as follows,
\[h^2\|\nabla\curl\ab\|_2^2= \big\|\overline{\psi}\,(\nabla-ih(\ab+\Fab))\psi\big\|^2_2\leq \|\psi\|_\infty^2\|(\nabla-ih(\ab+\Fab))\psi\|_2^2\leq \kappa^2 \|\psi\|_2^2\,,\]
after using the estimates (3)-(4) in Proposition~\ref{prop:EL-eq}.  Since $\curl\ab=0$ on $\partial\Omega$, we get further
\[h^2\lambda^D(\Omega)\|\curl\ab\|_2^2\leq  \kappa^2\|\psi\|_2^2\,,\]
where $\lambda^D(\Omega)$ is introduced in \eqref{eq:m*}. As for the term $\|\ab\|_4^2$, we use the forgoing inequality and the constant $m_*(\Omega)$ in \eqref{eq:m*}; we obtain
\[h^2m_*(\Omega)\|\ab\|_4^2\leq h^2\|\curl\ab\|_2^2\leq \frac{\kappa^2}{\lambda^D(\Omega)}\|\psi\|_2^2\,.\]
Plugging  the two forgoing inequalities  into \eqref{eq:est-E0-new} and using $\|\psi\|_4^2\leq \|\psi\|_\infty^2|\Omega|^{1/2}\leq |\Omega|^{1/2}$,  we get\Bk
\[0\geq \left(\lambda_{\rm AB}(h)-\kappa^2-C_*(\Omega)\kappa^2\right)\|\psi\|_2^2\,,
\]
where  $C_*(\Omega)$ is introduced in \eqref{eq:C*}.
This yields that $\|\psi\|_2^2=0$ when $\kappa$ and $h$ satisfy the relation
$\kappa^2<(1+C_*(\Omega))^{-1}\lambda_{\rm AB}(h)$.
\end{proof}
\medskip
\begin{proof}[Proof of Theorem~\ref{thm:ev-AB}]~

{\it Step~1.}

Choose $\varepsilon_0\in(0,1)$  such that  $\overline{D(0,\varepsilon_0)}\subset\Omega$. For all $\varepsilon\in(0,\varepsilon_0)$, we introduce the auxiliary eigenvalue, $\lambda_\varepsilon(h,\Omega)$, in the perforated domain $\Omega_\varepsilon:=\Omega\setminus \overline{D(0,\varepsilon)}$, defined as follows,
\begin{equation}\label{eq:ev-l-ep}
\lambda_\varepsilon(h,\Omega)=\inf_{u\in H^1_\varepsilon(\Omega_\varepsilon)}\frac{\|(\nabla-ih\Fab)u\|_{L^2(\Omega_\varepsilon)}^2}{\|u\|_{L^2(\Omega_\varepsilon)}^2}\,.
\end{equation}
Note that the  circulation \Bk of $h\Fab$ around the hole $D(0,\varepsilon)$ is
$$\Phi_0:=\frac1{2\pi}\int_{ \partial D(0,\varepsilon)}\Fab(x)\cdot dx=\frac{h}{2\pi}\,.$$
By \cite[Thm.~1.1]{HOOO}, for $\varepsilon\in(0,\varepsilon_0)$, $h\geq 0$ and $k\in\Z$,  it holds the following,
\begin{equation}\label{eq:HOOO}
 \lambda_\varepsilon(0,\Omega)=0\leq \lambda_\varepsilon(h,\Omega)\leq \lambda_\varepsilon(\pi,\Omega)~{\rm and}~
\lambda_\varepsilon(h+2k\pi,\Omega)=\lambda_\varepsilon(h,\Omega)\,.
\end{equation}
Furthermore, if $h\in2\pi\Z$, the function $u_0=e^{i\frac{h}{2\pi}\theta}$ is a zero mode for the operator $-(\nabla-ih\Fab)^2$, hence
\begin{equation}\label{eq:HOOO*}
\forall\,h\in 2\pi\Z\,,\quad \lambda_\varepsilon(h,\Omega)=0=\lambda_{\rm AB}(h,\Omega)\,.
\end{equation}
{ We shall show that
\begin{equation}\label{eq5.10}
\lim_{\varepsilon\to 0_+}\lambda_\varepsilon(h,\Omega)=\lambda_{\rm AB}(h,\Omega)\,, \q \forall h\geq 0.
\end{equation}
Then the $2\pi$-periodicity of $\lam_{\rm AB}(h,\O)$ in $h$ follows from \eqref{eq:HOOO}.}

{\it Step~2.}

Denote by $u\in { H^1_{h\Fab}(\Omega;\Bbb C)}$ a normalized ground state of $\lambda_{\rm AB}(h,\Omega)$.  By the min-max principle,
$$\lambda_{\rm AB}(h,\Omega)\geq \|(\nabla-ih\Fab)u\|^2_{L^2(\Omega_\varepsilon)}\geq \lambda_\varepsilon(h,\Omega)\|u\|_{L^2(\Omega_\varepsilon)}^2\,.$$
 By dominated convergence,
\[\lim_{\varepsilon\to0_+}\|u\|_{L^2(\Omega_\varepsilon)}^2=
\|u\|^2_{L^2(\Omega)}=1\,.\]
Consequently,
\begin{equation}\label{eq:l-AB>l-e}\lambda_{\rm AB}(h,\Omega)\geq \limsup_{\varepsilon\to0_+}\lambda_\varepsilon(h,\Omega)\,.
\end{equation}\Bk

{\it Step~3.}

Assume that $0<r<\varepsilon_0$ and $h\not\in2\pi\Z$.  For all $\varepsilon\in(0,r)$,
denote by $u_\varepsilon\in { H^1(\Omega_\varepsilon;\Bbb C)}$ a normalized ground state of $\lambda_\varepsilon(h,\Omega)$; the eigenvalue equation $-(\nabla-ih\Fab)^2u_\varepsilon=\lambda_\varepsilon(h,\Omega)u_\varepsilon$ yields that
\begin{equation}\label{eq:u-eps-bdd-H2}
\|u_\varepsilon\|_{H^2(\Omega_r)}\leq C_r
\end{equation}
for some constant $C_r$ independent from $\varepsilon$.
 Actually, by \eqref{eq:l-AB>l-e}, $\lambda_\varepsilon(h,\Omega)$ is bounded independently of $\varepsilon$, $\Fab\in C^\infty(\overline{\Omega_{r/2}};\R^2)$ and $u_\varepsilon$ satisfies the boundary condition $\nu\cdot\nabla u_\varepsilon =ih\nu\cdot\Fab u_\varepsilon$ on $\partial\Omega$, so we can use the standard elliptic estimates and write
\[\|u_\varepsilon\|_{H^2(\Omega_r)}\leq \hat C_r\big(\|\Delta u_\varepsilon\|_{L^2(\Omega)}+\|u_\varepsilon\|_{L^2(\Omega)}\big)\,,\]
which yields \eqref{eq:u-eps-bdd-H2}.
\Bk  By a diagonal sequence argument, we can extract a sequence $(u_{\varepsilon_n})_{n\geq 1}$ and a function $u_*:\Omega\setminus\{0\}\to\C$ such that $(u_{\varepsilon_n})_{n\geq 1}$ converges to $u_*$ in $H^1(\Omega_{r})$ for  $0<r<\varepsilon_0$.

It then results the following two inequalities,
$$\|u_*\|_{L^2(\Omega_r)}^2= \lim_{n\to+\infty}\|u_{\varepsilon_n}\|_{L^2(\Omega_r)}^2\leq 1\,,$$
and
$$
\liminf_{\varepsilon\to0_+}\lambda_\varepsilon(h,\Omega)\geq \liminf_{n\to+\infty}\lambda_{\varepsilon_n}(h,\Omega)\geq \liminf_{n\to+\infty}\|(\nabla-ih\Fab)u_{\varepsilon_n}\|_{L^2(\Omega_r)}^2
=\|(\nabla-ih\Fab)u_*\|_{L^2(\Omega_r)}^2\,.$$
Sending $r$ to $0$ and using monotone convergence, we deduce that $\|u_*\|_{L^2(\Omega)}\leq 1$, $u_*\in H^1_{\rm h\Fab}(\Omega)$ and
\begin{equation}\label{eq:l-e>l-AB}
\liminf_{\varepsilon\to0_+}\lambda_\varepsilon(h,\Omega)\geq\|(\nabla-ih\Fab)u_*\|_{L^2(\Omega)}^2\geq \lambda_{\rm AB}(h,\Omega)\|u_*\|_{L^2(\Omega)}^2\,.
\end{equation}
Let us prove that
\begin{equation}\label{eq:non-conc}
\limsup_{n\to+\infty}\left(\Be\frac1{r^2}\Bk\|u_{\varepsilon_n}\|^2_{L^2(D(0,r)\setminus D(0,\varepsilon_n))}\right)<+\infty\,.
\end{equation}
The inequality in \eqref{eq:non-conc} results immediately from \eqref{eq:l-AB>l-e} and Lemma~\ref{lem:1D-op}; in fact
$$\left(\frac{h\alpha(h)}{2\pi r}\right)^2\|u_{\varepsilon_n}\|^2_{L^2(D(0,r)\setminus D(0,\varepsilon))}\leq \|(\nabla-ih\Fab)u_{\varepsilon_n}\|^2_{L^2(D(0,r)\setminus D(0,\varepsilon))}\leq \lambda_\varepsilon(h)\,.$$
It now results from \eqref{eq:non-conc}
$$\|u_{\varepsilon_n}\|_{L^2(\Omega_r)}^2=1-\|u_{\varepsilon}\|^2_{L^2(D(0,r)\setminus D(0,\varepsilon_n))}\geq 1-\tilde Cr^2\,,$$
where $\tilde C>0$ is independent from $r$ and $\varepsilon_n$; consequently,
$$\|u_*\|^2_{L^2(\Omega_r)}=\lim_{n\to+\infty}\|u_{\varepsilon_n}\|_{L^2(\Omega_r)}^2\geq 1-\tilde Cr^2\,.$$
Sending $r$ to $0$, we get by monotone convergence that $\|u_*\|_{L^2(\Omega)}^2\geq 1$. Inserting this into \eqref{eq:l-e>l-AB}, we get
$$\lim_{\varepsilon\to0_+}\lambda_\varepsilon(h,\Omega)\geq \lambda_{\rm AB}(h,\Omega)\quad (h\not\in2\pi\Z)\,.$$
Collecting this inequality, \eqref{eq:l-AB>l-e} and \eqref{eq:HOOO*}, we get { \eqref{eq5.10} for $h\not\in 2\pi\Bbb Z$.}

{\it Step~4.}

Let us prove that $\lambda_{\rm AB}(h,\Omega)>0$ for $h\in(0,2\pi)$. Choose $r>0$ so that $D(0,r)\subset\Omega$. Suppose that $\lambda_{\rm AB}(h)=0$. By the min-max principle and \cite[Prop. 2.1]{KP},  we write ($u\in H^1_{h\Fab}(\Omega;\mathbb C)$ is the normalized ground state of $\lambda_{\rm AB}(h,\Omega)$)\Bk
$$ 0=\lambda_{\rm AB}(h,\Omega)\geq \lambda_{\rm AB}(h,D(0,r))\int_{D(0,r)}|u|^2\,dx\geq \frac{\alpha(h)}{4r^2}\int_{D(0,r)}|u|^2\,dx\,,$$
with $\alpha(h)>0$ for $h\in(0,2\pi)$, see \eqref{eq:alpha(h)}. Hence, $u=0$ on $D(0,r)$; by the min-max principle and the diamagnetic inequality,
$$0= \lambda_{\rm AB}(h,\Omega)\geq \lambda^{N,D}(\Omega\setminus D(0,r)) \int_{\Omega\setminus D(0,r)}|u|^2\,dx\,,$$
where $\lambda^{N,D}(\Omega\setminus D(0,r))>0$ is the eigenvalue of the Laplace operator with Neumann condition on $\partial\Omega$ and Dirichlet condition on $\partial D(0,r)$. This proves that $u=0$ on $\Omega\setminus D(0,r)$ too and contradicts the fact that $u$ is a normalized eigenfunction of
$\lambda_{\rm AB}(h,\Omega)$.\medskip

{\it Step~5.}

{ As mentioned above, that $\lambda_{\rm AB}(h)$ is $2\pi$-periodic follows from \eqref{eq:HOOO} and \eqref{eq5.10}.} By Step~4, $\lambda_{\rm AB}(h,\Omega)=0$ iff $h\in2\pi\Z$.

To finish the proof of Theorem~\ref{thm:ev-AB}, it remains to show that $\lambda_{\rm AB}(h,\Omega)$ is continuous at every $h_0\in[0,2\pi)$. If $h_0\in(0,2\pi)$, the result is a simple application of the min-max principle, since, in some neighborhood $I_0\subset(0,2\pi)$ of $h_0$, the space { $ H^1_{h\Fab}(\Omega;\Bbb C)$ is simply   $\mathcal D_0=H^1(\Omega;\Bbb C)\cap L^2(\Omega;\Bbb C;|\Fab|^2\,dx)$} (see Remark~\ref{rem:ev-R2}).  Consequently, the following inequality\footnote{ It results from Cauchy's inequality $2|ab|\leq \delta a+\delta^{-1}b$ with $\delta=|h-h_0|$, $a=(\nabla-ih_0\Fab)u$ and $b=(h-h_0)\Fab u$.}
\begin{multline*}
\left| \int_\Omega |(\nabla-ih\Fab)u|^2\,dx -\int_\Omega|(\nabla-ih_0\Fab)u|^2\,dx\right|\\
\leq 
 |h-h_0|^{1/2} 
\int_\Omega \big(  |(\nabla-ih_0\Fab)u|^2+2|h-h_0|\, |\Fab u|^2\big)\,dx\quad\big(h\in I_0, ~u\mathcal D_0\big)\,,
\end{multline*}
yields\footnote{$\|\Fab u\|^2_{L^2(\Omega)}\leq C_{h_0}\|u\|^2_{L^2(\Omega)}+\|(\nabla-ih_0\Fab)u\|_{L^2(\Omega)}^2$, by Remark~\ref{rem:ev-R2}.}  $ \lim\limits_{h\to h_0}\lambda_{\rm AB}(h,\Omega)=\lambda_{\rm AB}(h_0,\Omega)$. \Bk

So we treat the case $h_0=0$; we would like to prove that
\begin{equation}\label{eq:l-AB-h=0}
\lim_{h\to0_+}\lambda_{\rm AB}(h,\Omega)=0\,.
\end{equation}
 For all $\varepsilon\in(0,\varepsilon_0)$ and $p\in(0,1)$, we introduce the  following quasi-mode
$$w_\varepsilon(x)=\begin{cases} 1 \Bk&{~\rm if~}|x|\geq\sqrt{\varepsilon}\medskip\\
\left(\frac{|x|}{\sqrt{\varepsilon}}\right)^p&~{\rm if~}|x|<\sqrt{\varepsilon}
\end{cases}\,.
$$
The min-max principle and a straight forward computation yields
$$
0<\lambda_{\rm AB}(h)\leq  \frac{\|(\nabla-ih\Fab)w_\varepsilon\|_{L^2(\Omega)}^2}{\|w_
\varepsilon\|_{L^2(\Omega)}^2}\leq C_0( p+ h^2+\varepsilon)\,,
$$
for some constant $C_0$ independent of $h\in (0,2\pi)$; sending $\varepsilon$, $p$ and $h$ to $0_+$, we get \eqref{eq5.10}.
\end{proof}

\begin{proof}[Proof of Theorem~\ref{thm:osi-ms}]\

\noindent{\bf Step 1.}

The non-monotonicity of the function $h\mapsto\lambda(h\Fb_\varepsilon,\Omega)$ results from Theorem~\ref{thm:ev-AB} and
\begin{equation}\label{eq:ev-AB-like}
\lim_{\varepsilon\to0_+}\lambda(h\Fb_\varepsilon,\Omega)=\lambda_{\rm AB}(h,\Omega)\,.
\end{equation}
{\bf Step~1.1.}
We prove the inequality
\begin{equation}\label{eq:ev-AB-like-up}
\limsup_{\varepsilon\to0_+}\lambda(h\Fb_\varepsilon,\Omega)\leq \lambda_{\rm AB}(h,\Omega)\,.
\end{equation}
We start with the case where 
$h\not\in2\pi\Z$. The magnetic Sobolev space $H^1_{h\Fab}(\Omega;\C)$ is embedded in  $H^1(\Omega;\C)\cap L^2(\Omega;|\Fab|^2dx)$ by Remark~\ref{rem:ev-R2}. In which case, we can use a normalized  ground state $u$ of $\lambda_{\rm AB}(h,\Omega)$ and write
\[\lambda_\varepsilon(h\Fb_\varepsilon,\Omega)\leq \int_\Omega|(\nabla-ih\Fb_\varepsilon)u|^2\,dx
=\int_{\Omega_\varepsilon}|(\nabla-ih\Fab)u|^2\,dx+\int_{D(0,\varepsilon)}|(\nabla-ih\Fb_\varepsilon)u|^2\,dx\,.\]
We used that  $\Fb_\varepsilon=\Fab$ on $\Omega_\varepsilon=\Omega\setminus\overline{D(0,\varepsilon)}$; also on $D(0,\varepsilon)$, we have the inequality $|\Fb_\varepsilon|\leq |\Fab|$. Thus, by monotone convergence and  Remark~\ref{rem:ev-R2}, we get
\[\lim_{\varepsilon\to0_+}\int_{\Omega_\varepsilon}|(\nabla-ih\Fab)u|^2\,dx=\lim_{\varepsilon\to0_+}\int_{D(0,\varepsilon)}|(\nabla-ih\Fb_\varepsilon)u|^2\,dx=0\,,\]
which allows us to get \eqref{eq:ev-AB-like-up}. 

We still have to  prove \eqref{eq:ev-AB-like-up} in the case where $h\in 2\pi\Z$.  In which  case $\lambda_{\rm AB}(h,\Omega)=0$ by Theorem~\ref{thm:ev-AB}. We use the trial state $w_\varepsilon$ in \eqref{eq:t-state-w-ep} and write
\[\lambda_\varepsilon(h\Fb_\varepsilon,\Omega)\leq \frac{\|(\nabla-ih\Fb_\varepsilon)w_\varepsilon\|_2^2 }{\|w_\varepsilon\|_2^2 }
\leq \frac{ \pi p+(2\pi)^{-2}p^{-1}\varepsilon^p}{|\Omega|(1-\pi\varepsilon)}\,.\]
Taking the successive limits $\varepsilon\to 0_+$ and $p\to0_+$, we get \eqref{eq:ev-AB-like-up}. 

\noindent{\bf Step~1.2.}
We prove the inequality
\begin{equation}\label{eq:ev-AB-like-lb}
\liminf_{\varepsilon\to0_+}\lambda(h\Fb_\varepsilon,\Omega)\geq \lambda_{\rm AB}(h,\Omega)\,.
\end{equation}
This inequality is trivial in the case where $h\in2\pi\Z$, because $
\lambda_{\rm AB}(h,\Omega)=0$ by Theorem~\ref{thm:ev-AB}. So we handle the case where $h\not\in2\pi\Z$. Let $u_\varepsilon$ be a normalized ground state of $\lambda(h\Fb_\varepsilon,\Omega)$. Since $\Fb_\varepsilon=\Fab$ on $\Omega_\varepsilon=\Omega\setminus\overline{D(0,\varepsilon)}$, we  write by the min-max principle
\begin{equation}\label{eq:ev-AB-like-lb*}
\lambda(h\Fb_\varepsilon,\Omega)\geq \lambda_\varepsilon(h,\Omega)\int_{\Omega_\varepsilon}|u_\varepsilon|^2\,dx\,,
\end{equation}
where $\lambda_\varepsilon(h,\Omega)$ is the eigenvalue introduced in \eqref{eq:ev-l-ep}, which satisfies $\lim\limits_{\varepsilon\to0_+}\lambda_\varepsilon(h,\Omega)=\lambda_{\rm AB}(h,\Omega)$. Arguing as in the proof of  Theorem~\ref{thm:ev-AB}, Step~3, we can extract a function $u_*\in H^1_{h\Fab}(\Omega;\C)$, with $L^2$-norm equal to $1$,  and a sequence $(\varepsilon_n)$ that converges to $0$ so that $u_{\varepsilon_n}\to u_*$ in $H^1_{\rm loc}(\Omega\setminus\{0\};\C)$. Consequently, we infer \eqref{eq:ev-AB-like-lb} from \eqref{eq:ev-AB-like-lb*}.\medskip

\noindent{\bf Step~2.}

If $n$ is even and $h_n=\pi n$, then by \eqref{eq:ev-AB-like} and Theorem~\ref{thm:ev-AB}, $\lim\limits_{\varepsilon\to0_+}\lambda(h_n\Fb_\varepsilon,\Omega)=0$. Thus, given $\kappa>0$, we can write $\lambda(h_n\Fb_\varepsilon,\Omega)<\kappa^2$ for $\varepsilon$ sufficiently small. But this yields that the minimizers of the functional in \eqref{eq:GL-e} are non-trivial.

If $n$ is odd, then by \eqref{eq:ev-AB-like} and Theorem~\ref{thm:ev-AB}, $\lim\limits_{\varepsilon\to0_+}\lambda(h_n\Fb_\varepsilon,\Omega)=\lambda_{\rm AB}(\pi,\Omega)>0$. Consequently, given $\kappa$ such that $0<\kappa^2<(1+C_*(\Omega))^{-1}\lambda_{\rm AB}(\pi,\Omega)$, we can write $\kappa^2<(1+C_*(\Omega))^{-1}\lambda(h_n\Fb_\varepsilon,\Omega)$ for $\varepsilon$ sufficiently small. Now, we can repeat the argument of Theorem~\ref{thm:AB-ns} and write, for any critical point $(\psi_\varepsilon,\Ab_\varepsilon=\ab_\varepsilon+\Fb_\varepsilon)_{\kappa,h_n}$,
\begin{align*}0\geq -\frac{\kappa^2}{2}\|\psi_\varepsilon\|_4^4&=\|(\nabla-ih_n\Fb_\varepsilon)\psi_\varepsilon\|^2_2-\kappa^2\|\psi_\varepsilon\|^2_2+\frac{\kappa^2}{2}\|\psi_\varepsilon\|_4^4-h_n^2\|\ab_\varepsilon\psi_\varepsilon\|^2_2-2h_n^2\|\curl\ab_\varepsilon\|_2^2\\
&\geq \big(\lambda(h_n\Fb_\varepsilon,\Omega)-\kappa^2-C_*(\Omega)\kappa^2\big)\|\psi_\varepsilon\|_2^2
\end{align*}
which in turn yields that $\|\psi_\varepsilon\|_2^2=0$.\Bk
\end{proof}

\subsection*{Acknowledgements}
The authors would like to thank the anonymous  referee for the valuable comments, especially the suggestion to write Theorem~\ref{thm:osi-ms}. This work was partially supported by NYU-ECNU JRI Seed Fund for Collaborative Research.
A. Kachmar's research is partially supported by the Lebanese University in the framework of the project ``Analytical and Numerical Aspects of the Ginzburg-Landau Model''.
X.B. Pan was partially supported by the National Natural Science Foundation of China grant no.  11671143, and 11431005.

\v0.2in


\begin{thebibliography}{100}

\bibitem{AT}  R. Adami, A. Teta. On the Aharonov-Bohm Hamiltonian. {\it Lett. Math.
Phys.} {\bf 43}, 43-54 (1998).\Bk

\bibitem{A} W. Assaad. The breakdown of superconductivity in the presence of magnetic steps. {\it arXiv:1903.04847}.


\bibitem{AK} W. Assaad and A. Kachmar. The influence of magnetic steps on bulk superconductivity. {\it Discrete and Continuous Dynamical Systems (A)}, {\bf 36} (12) (2016), { 6623-6643.}

\bibitem{AKP}  W. Assaad, A. Kachmar, M. Persson-Sundqvist. The distribution of superconductivity near a magnetic barrier. {\it Comm. Math. Phys.} {\bf 366} { (1) (2019), } 269-332.

\bibitem{BF}  V. Bonnaillie-No\"el, S. Fournais. Superconductivity in domains with corners. {\it Rev.
 Math. Phys.} {\bf 19} (6) (2007) 607-637.\Bk

\bibitem{BM} J.F. Braschke and M. Melgaard.
The Friedrichs extension of the
Aharonov-Bohm Hamiltonian on a disk. {\it Integral Equations Operator Theory} {\bf 52} (3) (2005), { 419-436.}

\bibitem{CG}  M. Correggi, E. L. Giacomelli. Surface superconductivity in presence of corners,
{\it Rev. Math. Phys.} {\bf 29} (2) (2017) art. no. 1750005.\Bk

\bibitem{CG1}  M. Correggi, E. L. Giacomelli, Effects of corners in surface superconductivity.
 (2019); arXiv:1908.10112.\Bk

\bibitem{DHS}
N. Dombrowski, P.D. Hislop, E. Soccorsi. Edge currents and eigenvalue estimates for magnetic barrier Schr\"odinger operators. {\it Asymptot. Anal.} {\bf 89} { (3-4) (2014),  331-363.}


\bibitem{DS}  L.  Dabrowski, P.  Stovicek. Aharonov-Bohm effect with $\delta$-type interaction.
{\it J. Math. Phys.} {\bf 39}, 47-62 (1998).\Bk

\bibitem{San}  A.  Deleporte,  S. V$\tilde{\mathrm u}$ Ngoc. Uniform spectral asymptotics for semiclassical
wells on phase space loops. 	arXiv:2002.00234 .  \Bk

\bibitem{E}  L. Erd\"os. 
Dia- and paramagnetism for nonhomogeneous magnetic fields. {\it J. Math. Phys.} {\bf  38}, no. 3, (1997), pp. 1289-1317.\Bk

\bibitem{FH1}  S. Fournais, B. Helffer. 
S. Fournais, B. Helffer. Strong diamagnetism for general domains and applications. {\it Ann. Inst. Fourier} {\bf 57}, no. 7, (2007),  pp. 2389-2400.\Bk 

\bibitem{FH-d}  S. Fournais, B. Helffer. On the third critical field in Ginzburg-Landau theory. {\it Comm. Math. Phys.} {\bf 266} { (1) (2006),  153-196.}

\bibitem{FH2} S. Fournais, B. Helffer. On the Ginzburg-Landau critical field in three dimensions. {\it Comm. Pure Appl. Math.} {\bf 62} (2) 215-241, 
(2009).

\bibitem{FH-b} S. Fournais, B. Helffer. {\it Spectral methods in surface superconductivity}. Progress in Nonlinear Differential Equations and Their Applications. Vol. 77. Birkh\"{a}user Boston Inc., Boston, MA, 2010.

\bibitem{FK-jde}  S. Fournais, A. Kachmar. On the transition to the normal phase for superconductors surrounded by normal conductors.
{\it J.  Differential Equations }
{\bf 247} (6), pp. 1637-1672, 2009.\Bk

\bibitem{FP-ball} S. Fournais, M. Persson.  Strong diamagnetism for the ball in three dimensions. {\it Asymptot. Anal.} {\bf 72} (1-2) 77-123, 2011. \Bk

\bibitem{FP} S. Fournais, M. Persson-Sundqvist. Lack of diamagnetism and the Little-Parks effect. {\it
Comm. Math. Phys.} {\bf 337} (1) (2015), { 191-224.}

\bibitem{GP} T. Giorgi, D. Phillips.  The breakdown of superconductivity due to strong fields for the Ginzburg-Landau model. {\it SIAM J. Math. Anal.} {\bf 30} (2) (1999), { 341-359.}


\bibitem{Hetal}
J. Heinonen, P. Koskela, N. Shanmugalingam, J.  Tyson.
{\it Sobolev spaces on metric measure spaces.
An approach based on upper gradients.} New Mathematical Monographs, Vol. 27. Cambridge University Press, Cambridge, 2015.

\bibitem{HOOO} B. Helffer, M. Hoffmann-Ostenhof, T. Hoffmann-Ostenhof, M.P. Owen.
Nodal sets for ground states of Schr\"odinger operators with zero magnetic field in non-simply connected domains. {\it
Comm. Math. Phys.} {\bf 202} { (3) (1999),  629-649.}

\bibitem{HK-LMP}  B. Helffer, A. Kachmar. The density of superconductivity in domains with corners. {\it Lett.  Math. Phys.} {\bf 108} 2169-2187 (2018). \Bk

\bibitem{HK} B. Helffer, A. Kachmar.
Thin domain limit and counterexamples to strong diamagnetism.
{\it arXiv:1905.06152}.

\bibitem{HK-arma} B. Helffer, A. Kachmar. The Ginzburg-Landau functional with vanishing magnetic field. {\it Arch. Ration. Mech. Anal.} {\bf 218} { (1) (2015),  55-122.}



\bibitem{HPRS}
P.D. Hislop, N. Popoff, N. Raymond, N, M. Persson-Sundqvist.
Band functions in the presence of magnetic steps. {\it
Math. Models Methods Appl. Sci.} {\bf 26} { (1) (2016),  161-184.}




\bibitem{KP} A. Kachmar, X.B. Pan. Superconductivity and the Aharonov-Bohm effect. {\it C. R. Acad. Sci. Paris, Ser. I} {\bf 357} (2019), 216-220.

\bibitem{KS} A. Kachmar, M.  P. Sundqvist. Counterexample to strong diamagnetism for the magnetic Robin Laplacian. arXiv:1910.12499  (2019).

\bibitem{LW} A. Laptev, T. Weidl. Hardy inequalities for magnetic Dirichlet forms.
In Mathematical results in quantum mechanics (Prague, 1998), pages 299-
305. Birkh\"auser, Basel, 1999.


\bibitem{L} C. L\`ena. Eigenvalues variations for Aharonov-Bohm operators. {\it J. Math.
Phys.} {\bf 56} (1) (2015), { article no. 011502.}

\bibitem{LL} E.H. Lieb, M. Loss. {\it Analysis}.
{ $2^{nd}$ edition. Graduate Studies in Math., Vol. 14. Amer. Math. Soc., Providence, RI, 2001.}



\bibitem{LP} W.A. Little, R.D. Parks. Observation of quantum periodicity in the
transition temperature of a superconducting cylinder. {\it Phys. Rev. Lett.}
{\bf 9} (1962), 9-12.


\bibitem{LuP} K.N. Lu, X.B. Pan. Estimates of the upper critical field for the
Ginzburg-Landau equations of superconductivity. {\it Physica D}, {\bf 127} (1-2) (1999), 73-104.




\bibitem{PK} X.B. Pan, K.H. Kwek. Schr\"odinger operators with non-degenerately vanishing magnetic fields in bounded domains. {\it Trans. Amer. Math. Soc.} {\bf 354} (10) (2002), 4201-4227.

\bibitem{RP}
J. Reijniers and F. Peeters. Snake orbits and related magnetic edge states. {\it J. Phys.: Condens. Matter} {\bf 12} (2000), { 9771-9786.}
\bibitem{SS-b} E. Sandier, S. Serfaty.
{\it Vortices in the magnetic Ginzburg-Landau model}. Progress in Nonlinear Differential Equations and their Applications, Vol. 70. Birkh\"auser Boston, Inc., Boston, MA, 2007.



\end{thebibliography}
\end{document}